\newtheorem{theorem}{Theorem}[section]
\newtheorem{theo}[theorem]{Theorem}
\newtheorem{prop}[theorem]{Proposition}
\newtheorem{coro}[theorem]{Corollary}
\newtheorem{lemm}[theorem]{Lemma}
\newtheorem{conj}[theorem]{Conjecture}
\newtheorem{simp}[theorem]{Simpleness Condition}
\theoremstyle{definition}
\newtheorem{defi}[theorem]{Definition}
\newtheorem{exam}[theorem]{Example}
\newtheorem{rema}[theorem]{Remark}
\numberwithin{equation}{section}
\newcommand{\CC}{\mathbb{C}}
\newcommand{\LL}{\mathbb{L}}
\newcommand{\PP}{\mathbb{P}}
\newcommand{\QQ}{\mathbb{Q}}
\newcommand{\ZZ}{\mathbb{Z}}
\def\sO{{\mathscr O}}
\def\sV{\mathscr{V}}
\newcommand{\kk}{\bk}
\newcommand{\cal}{\mathcal}
\def\cA{{\cal A}}
\def\cE{{\cal E}}
\def\cH{{\cal H}}
\def\cM{{\cal M}}
\def\cN{{\cal N}}
\def\cO{{\cal O}}
\def\cV{{\cal V}}
\def\cW{{\cal W}}
\def\cZ{{\cal Z}}
\def\cI{{\cal I}}
\def\fp{\mathfrak{p}}
\newcommand{\tih}{\tilde{h}}
\def\mapright#1{\,\smash{\mathop{\lra}\limits^{#1}}\,}
\let\cN=\cN
\def\dual{^{\vee}}
\def\ucirc{^\circ}
\def\sta{^\ast}
\def\virt{^{\mathrm{vir}}}
\def\upmo{^{-1}}
\def\sta{^{\ast}}
\def\sta{^*}
\def\eps{\epsilon}
\def\lbe{_{\beta}}
\def\lra{\longrightarrow}
\newcommand{\lam}{\lambda}
\newcommand{\si}{\sigma}
\def\begeq{\begin{equation}}
\def\endeq{\end{equation}}
\def\and{\quad{\rm and}\quad}
\def\bl{\bigl(}
\def\br{\bigr)}
\def\defeq{:=}
\def\sub{\subset}
\def\Ao{{\mathbb A}^{\!1}}
\def\Po{{\mathbb P^1}}
\def\and{\quad\text{and}\quad}
\def\mapright#1{\,\smash{\mathop{\lra}\limits^{#1}}\,}
\def\lalp{_\alpha}
 \DeclareMathOperator{\Ext}{Ext}
  \DeclareMathOperator{\Hom}{Hom}
 \DeclareMathOperator{\Aut}{Aut}
\DeclareMathOperator{\id}{id} 
 \DeclareMathOperator{\res}{res}
\DeclareMathOperator{\spec}{Spec}
\DeclareMathOperator{\coker}{coker}
\def\bul{^\bullet}
\def\vsp{\vskip5pt}
\def\kk{{\mathbf k}}
\def\PP{\mathbb{P}}
\def\CC{\mathbb{C}}
\def\lra{\longrightarrow}
\def\mapright#1{\,\smash{\mathop{\lra}\limits^{#1}}\,}
\def\mapleft#1{\,\smash{\mathop{\longleftarrow}\limits^{#1}}\,}
\def\cO{\mathcal{O}}
\def\csta{{\CC\sta}}
\def\vdim{\text{vir}.\dim}
\def\sigc{\Sigma^\circ}
\def\DM{DM-}
\def\LL{{\mathbb L}}
\def\gmo{^{\ge -1}}
\def\beq{\begin{equation}}
\def\eeq{\end{equation}}
\title[A wall crossing formula of
DT invariants]{A wall crossing formula of Donaldson-Thomas invariants without Chern-Simons functional}
\author{Young-Hoon Kiem}
\address{Department of Mathematics and Research Institute
of Mathematics, Seoul National University, Seoul 151-747, Korea}
\email{kiem@math.snu.ac.kr}
\author{Jun Li}
\address{Department of Mathematics, Stanford University, Stanford,
USA} \email{jli@math.stanford.edu}
\date{}
\thanks{Young-Hoon Kiem was partially supported by NRF 2010-0007786; Jun Li was partially
supported by NSF grant NSF-0601002.}
\begin{document}
\maketitle

\begin{abstract}
We prove a wall crossing formula of Donaldson-Thomas type invariants without Chern-Simons functionals.
\end{abstract}

\section{Introduction}

A wall crossing formula refers to the change of invariants of moduli
spaces when they undergo birational transformations, like the
variation of moduli spaces of stable sheaves when the stability
condition changes. Wall crossing formulas were investigated
extensively for the Donaldson polynomial invariants of surfaces in
the 1990s by several groups, including Friedman-Qin \cite{FQ},
Ellingsurd-G\"ottsche \cite{EG} and Matsuki-Wentworth \cite{MW}. At
that time, the notion of virtual cycle had not been discovered and
wall crossing formulas were worked out largely for smooth moduli
spaces. Wall crossing formulas involving virtual cycles were taken
up by T. Mochizuki \cite{Moch} in his work on higher rank Donaldson
polynomial invariants of surfaces. All these approaches use
Geometric Invariant Theory (or GIT for short) flips discovered by
Dolgachev-Hu and Thaddeus \cite{Hu, Th}, relying on that the moduli
spaces of sheaves are constructed using GIT \cite{Gie, Maru, Sim}.

For a Calabi-Yau three-fold $S$, the moduli space of stable
sheaves with fixed Chern classes is equipped with a
\emph{symmetric obstruction theory};  the degree of its virtual
fundamental class defines the Donaldson-Thomas invariant of $S$.
In \cite[Thm. 5.9]{Joyce}, using the existence of local \emph{Chern-Simons
functionals}, Joyce-Song prove a wall crossing
formula in case the stability crosses a wall (cf. Definition
\ref{assum1}). Here a local Chern-Simons functional refers to a
function $f$ defined on a smooth $Y$ such that the vanishing of
$df$ defines the germ of the moduli space and its symmetric
obstruction theory. By using Hall algebras and
breaking the moduli spaces into pieces, they define generalized
Donaldson-Thomas invariants for arbitrary Chern classes and find
formulas comparing them.

Recently, the study of moduli of stable sheaves has been extended
to moduli of stable objects in the derived category
$D^b(\mathrm{Coh} S)$ of bounded complexes of coherent sheaves.
In general, the study of moduli of complexes of coherent
sheaves or objects in the derived category cannot be reduced to
the study of sheaves, and the existence of local Chern-Simons
functionals is not known. Therefore, in order to extend the wall
crossing formulas to moduli of stable objects in the derived
category, a new method is required.

In this paper we develop a new method for wall crossing formulas
of Donaldson-Thomas type invariants \emph{without relying on
Chern-Simons functionals}. Our method is not motivic and does not use
Behrend's function. Instead we perform a
\emph{$\CC\sta$-intrinsic blow-up} to resolve the issue of
infinite stabilizers, construct an auxiliary space (called the
\emph{master space}) that captures the wall crossing phenomenon,
and apply the virtual localization formula \cite{GrPand} and our
reduction technique \cite{Kiem-Li}.

We now outline our results. We let $\cM=[X/\CC^*]$ be the quotient
of a separated $\CC\sta$-equivariant Deligne-Mumford (or DM for
short) stack $X$ equipped with an equivariant symmetric
obstruction theory (Definition \ref{def-perobth}).
%Throughout this paper, we assume
%that the following localization requirement of Graber-Pandharipande
%(\cite[Appendix C]{GrPand}) is satisfied: $X$ admits an equivariant
%global embedding in a nonsingular \DM stack.
Suppose $\cM$ contains
two open dense substacks $M_\pm=[X_\pm/\CC^*]$ that are separated
proper \DM stacks. The equivariant symmetric obstruction theory of
$X$ induces symmetric obstruction theories of $M_\pm$. By \cite{BF,
LT}, the symmetric obstruction theories provide their respective
(dimension $0$) virtual cycles $[M_\pm]\virt$. The wall crossing
formula measures the difference $\deg [M_+]\virt-\deg [M_-]\virt$.

Taking the $\CC^*$-fixed part of the symmetric obstruction theory
along the fixed point locus $X^{\CC^*}$, we have an induced
symmetric obstruction theory on $X^{\CC^*}$. We let its zero
dimensional virtual fundamental class be
$$[X^{\CC^*}]\virt = \sum _{k=1}^r a_k [p_k]; \quad p_k\in X^{\CC\sta},\ a_k\in \QQ.
$$

%
%Suppose $M_\pm$ have the same virtual dimension $d$, which holds
%if we assume $\cM$ has pure virtual dimension. Let $\alpha\in A^d
%\cM$ be a cohomology class and $\alpha_\pm\in A^d M_\pm$ be the
%pullback classes. The wall crossing seeks to find an explicit
%formula of the difference
%$$\delta[\alpha]=\alpha_+\cdot[M_+]\virt-\alpha_-\cdot[M_-]\virt.
%$$

%In Definition \ref{def-master}, we introduce the notion of proper
%$\CC\sta$-flips $M_\pm\sub \cM$. The first main result of the paper
%calculates $\delta[\alpha]$ as follows.
%
%\begin{theo}\label{thm1}
%Let $X$ be a separated $\CC\sta$-equivariant \DM stack with an
%equivariant perfect obstruction theory. Suppose $M_\pm\sub
%\cM=[X/\CC^*]$ are proper $\CC\sta$-flips. We suppose further that
%$X$ satisfies the $\CC\sta$-localization requirement in Remark
%\ref{remloc}.
%Let $d$ be the virtual dimension of $M_\pm$ and
%$\alpha\in A^d_{\CC^*}X$ be an equivariant cohomology class. Then
%the wall crossing is given by the residue
%$$\delta[\alpha]=\mathrm{res}\Bigl(\alpha\cdot
%\frac{[X^{\CC^*}]\virt}{e(N_{X^{\CC^*}}\virt)}\Bigr)\in \QQ,
%$$
%where $N_{X^{\CC^*}}\virt$ is the $\CC\sta$-equivariant virtual
%normal bundle of the $\CC\sta$-fixed substack $X^{\CC^*}$ in $X$.
%\end{theo}
%
%This formula is a natural generalization of the wall crossing
%formula of Guillemin and Kalkman \cite{GK}, inspired by
%the Jeffrey-Kirwan localization theorem \cite{JK}.
%\vsp
%
%When $\cM$ has a \emph{symmetric} obstruction theory, we can
%calculate the wall crossing more explicitly.

\begin{theo}\label{thm1.2} Suppose $M_\pm\subset \cM$ is a
simple flip (Definition \ref{def-master1}) and $\cM=[X/\CC^*]$ has a
symmetric obstruction theory (Definition \ref{def-perobth}). Suppose
further that $X$ can be embedded $\csta$-equivariantly into a smooth
DM-stack (\cite[Appendix C]{GrPand}). Then
\[
\deg [M_+]\virt - \deg [M_-]\virt=\sum_{k} a_k\cdot\lam_k;\quad
 \lam_k=(-1)^{n_k-1}\sum_jn_{k,j}/j ,
\]
where $n_{k,j}$ is the dimension of the weight $j$ part of the Zariski tangent space $T_{X,p_k}$ and $n_k=\sum_j n_{k,j}$.

In case $\lam_k=\lambda$ is independent of
$k$, then
\[
\deg [M_+]\virt - \deg [M_-]\virt=\lambda\cdot \deg
[X^{\CC^*}]\virt.
\]
\end{theo}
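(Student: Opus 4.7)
The plan is to construct an auxiliary \emph{master space} $\mathfrak{M}$ carrying an induced $\CC^*$-action and symmetric obstruction theory whose virtual localization produces both $[M_\pm]\virt$ as ``boundary'' fixed loci together with localized contributions near each $p_k$, and then to read off the wall crossing formula from the residue relation.

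First I would resolve the infinite stabilizers at the fixed points $p_k\in X^{\CC^*}$ by performing the $\CC^*$-intrinsic blow-up of $X$ along $X^{\CC^*}$. Let $\tilde X\to X$ be the resulting $\CC^*$-equivariant stack; by construction the $\CC^*$-action on $\tilde X$ has only finite stabilizers away from a controlled exceptional locus, and the equivariant symmetric obstruction theory on $X$ pulls back to an induced obstruction theory on $\tilde X$. The hypothesis that $X$ embeds $\CC^*$-equivariantly in a smooth DM-stack is essential here, to have a global ambient space in which to perform and globalize the blow-up. The exceptional locus over $p_k$ is a weighted projective stack whose weights are exactly the $\CC^*$-weights of the Zariski tangent space $T_{X,p_k}$, with multiplicities $n_{k,j}$.

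Next, I would assemble the master space from $\tilde X\times\PP^1$ with the diagonal $\CC^*$-action $t\cdot(x,[a\!:\!b])=(tx,[ta\!:\!b])$ (or an appropriate twisted variant), and take its stack quotient by this $\CC^*$. The two open substacks $M_\pm$ of $\cM$ correspond to the fibers over $[1\!:\!0]$ and $[0\!:\!1]$, and the residual $\CC^*$-action on $\mathfrak{M}$ fixes $M_+$, $M_-$, and a collection of fixed components $F_k$ sitting over each $p_k$, namely the weighted projective exceptional loci of the intrinsic blow-up. I would equip $\mathfrak{M}$ with the induced $\CC^*$-equivariant symmetric obstruction theory coming from $X$, verify that $\mathfrak{M}$ is proper (using that $M_\pm$ are proper and that the intrinsic blow-up is proper over $X$), and check compatibility of the residual $\CC^*$-action with the obstruction theory. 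Because the simple flip assumption (Definition \ref{def-master1}) pins down the $\CC^*$-fixed locus of $\mathfrak{M}$ as exactly $M_+\sqcup M_-\sqcup\bigsqcup_k F_k$, the stage is set for localization.

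With the master space in hand, I would apply the $\CC^*$-equivariant virtual localization of \cite{GrPand} together with the reduction technique of \cite{Kiem-Li} to handle the cosection coming from the symmetric obstruction theory. The virtual dimension of $\mathfrak{M}$ is $1$, so pairing its equivariant virtual cycle against $1$ and taking the non-equivariant limit forces the sum of residue contributions from the fixed loci to vanish. The contributions from $M_\pm$ come out, after cancellation of the trivial weight on the $\PP^1$-direction, to $\pm\deg[M_\pm]\virt$, while the contribution from $F_k$ is a residue integral on the weighted projective space $\PP(T_{X,p_k})$ whose integrand is determined by the symmetric obstruction theory; a direct weighted-projective residue calculation at the $\CC^*$-fixed strata $\PP(T_j)\subset\PP(T_{X,p_k})$ collapses to $a_k\cdot\lambda_k$ with $\lambda_k=(-1)^{n_k-1}\sum_j n_{k,j}/j$. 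Summing over $k$ and rearranging yields the formula, and the constant-$\lambda_k$ case reorganizes as $\lambda\cdot\deg[X^{\CC^*}]\virt$.

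The main obstacle I anticipate is the construction of the $\CC^*$-intrinsic blow-up together with a genuine $\CC^*$-equivariant symmetric obstruction theory on the master space \emph{without} invoking a Chern-Simons functional; this is exactly what distinguishes the present approach from \cite{Joyce}. In particular, one must show that the symmetric obstruction theory survives the blow-up and the $\PP^1$-construction, and that the reduction technique of \cite{Kiem-Li} applies in this equivariant setting so that the localization contributions are well-defined and extract the right residues. Once these structural results are established, the wall crossing formula becomes a bookkeeping of fixed loci (content of the simple flip hypothesis) followed by the weighted-projective residue computation sketched above.
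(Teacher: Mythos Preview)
Your outline matches the paper's approach essentially step for step: intrinsic blow-up $\bar X$ of $X$ along $X^{\CC^*}$, master space built from $\bar X\times\PP^1$, virtual localization on the master space together with the cosection reduction of \cite{Kiem-Li}, and a residue computation over the exceptional fixed loci that produces $(-1)^{n_k-1}\sum_j n_{k,j}/j$.

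Two technical points deserve correction. First, the obstruction theory induced on $\bar X$ (and hence on the master space $\bar Z$) is \emph{perfect but no longer symmetric}; the symmetry on $X$ is precisely what furnishes the cosection $\bar\cV_{-1}^\vee\to\cO(-2\bar E)$ used in the reduction, not a symmetric theory on $\bar Z$ itself. Second, the virtual dimension of $\bar Z$ is $0$, not $1$: what makes the residue argument go is that the cosection reduction gives $[\bar Z]\virt=[\bar Z]\virt_{\mathrm{red}}\cap c_1(\cO(-2\bar E))$ with $[\bar Z]\virt_{\mathrm{red}}$ of dimension $1$, and it is this reduced class whose $t$-residue vanishes. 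Also, the exceptional divisor is an ordinary projectivization (the $\CC^*$-fixed locus inside it is the disjoint union $\coprod_j\PP V_j$ over the individual weight pieces), not a weighted projective stack; the residue computation is carried out separately on each $\PP V_j$ and then summed, exactly as you indicate in your last step.
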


We comment that the proof does not require the existence of local
Chern-Simons functionals.

Our proof goes as follows. We first construct a
$\CC\sta$-intrinsic blow-up $\bar X$ of $X$ along $X^{\CC^*}$
%; it has the property that its induced $\CC\sta$-action has finite stabilizers
%everywhere, and it has an induced perfect obstruction theory.
%This way,
and show that the quotient $[\bar X/\CC\sta]$ has an induced perfect
obstruction theory. We then construct a master space $\bar Z$ for
$\bar X$, analogous to that in \cite{Th} and apply the localization
by cosection technique \cite{Kiem-Li} to construct a reduced virtual
fundamental class of $\bar Z$. Applying the virtual localization
formula of \cite{GrPand} to $\bar Z$, we obtain Theorem
\ref{thm1.2}.
\vsp

It is worthwhile to comment on the assumptions of Theorem
\ref{thm1.2}. As shown below, the existence of $X$ is assured in
quite a general setting. But embedding $X$ $\csta$-equivariantly
into a smooth DM-stack seems restrictive. In two occasions of the
proof we use this assumption; one is constructing the obstruction
theory of the $\csta$-intrinsic blow-up, the other is applying
Graber-Pandharipande's virtual $\csta$-localization theorem.

Here is what we plan to do to remove this technical assumption in
the near future. Firstly, we intend to develop a {\sl local
obstruction theory} that consists of an \'etale atlas $U\lalp\to
X$, obstruction theories $\eta\lalp: F\lalp\bul\to\LL_{U\lalp}$, and
compatibility of the obstruction theories 
over $U_{\alpha\beta}=U\lalp\times_X U\lbe$. Such a theory is known
to the experts, and the usual tools on virtual cycles apply. This
seems necessary since it is hard to construct a global universal family of
the moduli stack of derived category objects.
%We will comment on this theory at the end of this paper.
Secondly, we intend to prove the
$\csta$-localization theorem in the setting of local
obstruction theory. This seems to require more efforts, but should
be achievable in the near future.

In light of this, in this paper we also prove some technical results
aimed at proving the following generalization.

\begin{conj}\label{conj} Theorem \ref{thm1.2} holds true without assuming
the existence of a $\csta$-equivariant embedding of $X$ into a
smooth DM-stack.
\end{conj}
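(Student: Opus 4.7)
The plan is to retain the architecture of the proof of Theorem \ref{thm1.2} --- construct the $\csta$-intrinsic blow-up $\bar X$ of $X$ along $X^{\csta}$, form the master space $\bar Z$, apply cosection localization \cite{Kiem-Li}, and conclude by $\csta$-virtual localization \cite{GrPand} --- while replacing the single global smooth ambient space by a \emph{local} equivariant symmetric obstruction theory. The smooth embedding hypothesis enters the proof of Theorem \ref{thm1.2} in exactly two places, both flagged by the authors: building a perfect obstruction theory on $[\bar X/\csta]$, and invoking the virtual $\csta$-localization formula. The conjecture therefore reduces to delocalizing both constructions.

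First, following the outline at the end of Section~1, I would formalize a \emph{local symmetric obstruction theory} on $X$ as the data of a $\csta$-equivariant \'etale atlas $\{U_\alpha\to X\}$, equivariant perfect obstruction theories $\eta_\alpha\colon F_\alpha^\bullet\to\LL_{U_\alpha}$ with self-dualities $\theta_\alpha\colon F_\alpha^\bullet\simeq (F_\alpha^\bullet)^\vee[1]$, and cocycle compatibility data on the overlaps $U_{\alpha\beta}=U_\alpha\times_XU_\beta$. By \cite[Appendix C]{GrPand} each $U_\alpha$ admits a $\csta$-equivariant embedding into a smooth DM stack, so the intrinsic blow-up can be performed chart by chart. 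The key lemma is that the $\csta$-intrinsic blow-up commutes with $\csta$-equivariant \'etale base change, so the local blow-ups glue to a DM stack $\bar X$ equipped with a canonical local perfect obstruction theory descending to $[\bar X/\csta]$. This reduces to an intrinsicness statement: the construction depends only on the local obstruction data, not on the auxiliary smooth thickening.

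Second, the master space $\bar Z$ of $\bar X$ and the cosection of its obstruction sheaf, both constructed in the proof of Theorem \ref{thm1.2}, are manifestly \'etale-local in nature. The cosection is produced from the $\csta$-action on the obstruction sheaf and therefore glues directly from the local symmetric obstruction theory; its vanishing locus remains the disjoint union of $\bar X^{\csta}$ and the strict transforms of $M_\pm$, so the Kiem--Li localized virtual class is available on $\bar Z$. Granted a virtual localization theorem in the local framework, applying it to $\bar Z$ and computing the residue at each isolated fixed point $p_k$ proceeds exactly as in Theorem \ref{thm1.2}; the weights $n_{k,j}$ are intrinsic invariants of the local symmetric obstruction theory at $p_k$ and reproduce $\lam_k$ unchanged.

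The principal obstacle is step three: extending the $\csta$-virtual localization formula of \cite{GrPand} to the local obstruction theory setting. For each $\csta$-fixed component $F\subset\bar Z$ I must assemble a virtual normal class $N_F\virt$ from the moving parts of the local obstruction theories, prove its equivariant Euler class is invertible after inverting the equivariant parameter, and establish the residue identity
\[
[\bar Z]\virt\loc=\sum_F\frac{[F]\virt}{e(N_F\virt)}.
\]
The genuine difficulty is globalizing Graber--Pandharipande's double deformation to the normal cone without a single ambient smooth stack: the deformation must be carried out chart by chart and the localized virtual classes patched via the compatibility cocycles of the local obstruction theory, all while maintaining equivariance. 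I expect this to be the real technical barrier; once it is in hand, the residue computation at the $p_k$ and the resulting formula follow verbatim from the argument of Theorem \ref{thm1.2}, proving the conjecture.
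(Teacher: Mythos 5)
The statement you are addressing is labeled a conjecture in the paper, and the authors explicitly do not prove it: they say just before Corollary \ref{coro1.3} that Conjecture \ref{conj} is not yet established and therefore impose the global embedding hypothesis throughout. There is consequently no proof in the paper to compare your proposal against. What the paper does offer is a two-step research program, stated at the end of the introduction, and your roadmap reproduces it faithfully: (i) replace the global smooth embedding by a local symmetric obstruction theory on a $\csta$-equivariant \'etale atlas, and (ii) extend the virtual $\csta$-localization theorem of \cite{GrPand} to that local setting. You correctly observe that the $\csta$-intrinsic blow-up is already constructed chart by chart (the ``DM-stack case'' in \S3), that the master space and cosection constructions are \'etale-local, and you correctly single out the local-theory localization formula as the main outstanding difficulty.

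Your proposal, however, is a conditional reduction rather than a proof, and you flag the gap yourself. Beyond extending Graber--Pandharipande, there is a second gluing issue that your write-up passes over too quickly: even when $X$ carries a genuinely global symmetric obstruction theory, the paper observes in \S3 that the local arrows $\bar V\bul\lalp\to\LL_{\bar X\lalp}\gmo$ of \eqref{U-ob} need not agree on overlaps, because each presentation of $\LL_{\bar X\lalp}\gmo$ depends on the auxiliary embedding $U\lalp\to Y\lalp$. This is precisely what the global embedding hypothesis of Proposition \ref{prop3.7} is used to repair, and the paper defers the intrinsic formulation to \cite{Chang-Li}. So the claim that ``the local blow-ups glue to a DM stack $\bar X$ equipped with a canonical local perfect obstruction theory'' already requires a nontrivial compatibility argument before one even reaches localization, and the same applies to globalizing the cosection of Corollary \ref{reduction} with the claimed degeneracy locus. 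Until those gluing statements and the local-theory localization formula are proved --- the paper explicitly defers both --- the conjecture remains open; your proposal is consistent with the authors' stated plan but does not close it.
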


We now comment on how Theorem \ref{thm1.2} applies toward a
wall crossing formula of Donaldson-Thomas invariants of moduli of
derived category objects over a Calabi-Yau three-fold $S$. Let $\tau_\pm$
be two stability conditions crossing a simple wall $\tau_0$
(Definition \ref{assum1}); let $\cM$ be the moduli stack of
$\tau_0$-semistable objects. We construct a separated \DM stack
$X$ parameterizing pairs $(E,\si)$, where $E\in\cM$ and
$\sigma:L(E)\mapright{\cong} \CC$ for some 1-dimensional vector
space $L(E)$ associated to $E$. Scaling $\si$ with elements in
$\CC\sta$ makes $X$ a $\CC\sta$-equivariant stack. Then
$\cM=[X/\CC^*]$, while the moduli stacks of $\tau_\pm$-stable
objects are open substacks $M_\pm=[X_\pm/\CC^*]$ for two open
substacks $X_\pm\sub X$ (cf. Proposition \ref{p.2.10}). 
Because Conjecture 1.2 is not proved yet, we suppose that $X$
equivariantly embeds into a smooth \DM stack. This condition is
always satisfied when $\cM$ is constructed as a GIT quotient. In
particular, our results apply directly to the moduli of
\emph{sheaves} or \emph{stable pairs}. We show in Lemma \ref{p2.8}
that $X$ has a $\CC^*$-equivariant symmetric obstruction theory.
It is now immediate to deduce the following wall crossing formula
from Theorem \ref{thm1.2}.

\begin{coro}\label{coro1.3} 
Let $\tau_\pm$
be two stability conditions crossing a simple wall $\tau_0$
(Definition \ref{assum1}); let $\cM$ be the moduli stack of
$\tau_0$-semistable objects $D^b(\mathrm{Coh}S)$ with the same fixed Chern classes, and let 
$M_\pm\sub\cM $ be the moduli stacks of $\tau_\pm$-stable objects. Suppose 
$M_\pm\sub\cM$ is a simple wall crossing of each other (cf. Definition \ref{assum1}).
Suppose the DM stack $X$ constructed in Proposition \ref{p.2.10} embeds $\csta$-equivariantly
in a smooth DM stack. Suppose further that $Ext^{\le
-1}(E,E)=0$ for all $E\in \cM$. Then 
$$\deg [M_+]\virt - \deg [M_-]\virt =
(-1)^{\chi(E_1,E_2)-1}\cdot \chi(E_1,E_2)\cdot \deg [{M_1}] \virt \cdot \deg
[{M_2}]\virt,
$$
where $E_i\in M_i$, $M_i$ are the moduli stacks 
specified in Definition \ref{assum1} and $\chi(E_1,E_2)=\sum(-1)^i\dim \Ext^i(E_1,E_2)$.

\end{coro}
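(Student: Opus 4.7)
The plan is to derive Corollary~\ref{coro1.3} by a direct application of Theorem~\ref{thm1.2} to the pair construction for stable derived-category objects. First I would invoke Proposition~\ref{p.2.10} to produce the $\csta$-equivariant separated \DM stack $X$ of pairs $(E,\sigma)$, so that $\cM=[X/\csta]$ and $M_\pm=[X_\pm/\csta]$, and then use Lemma~\ref{p2.8} to equip $X$ with a $\csta$-equivariant symmetric obstruction theory. The $\csta$-equivariant embedding of $X$ into a smooth \DM stack is assumed. The simple wall hypothesis (Definition~\ref{assum1}) translates directly into the assertion that $M_\pm\subset\cM$ is a simple flip in the sense of Definition~\ref{def-master1}, since every strictly $\tau_0$-semistable object is of the form $E\cong E_1\oplus E_2$ with $E_i\in M_i$ non-isomorphic $\tau_0$-stable. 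Hence all hypotheses of Theorem~\ref{thm1.2} are in force.

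Next I would identify the fixed locus. Scaling of $\sigma$ can only be absorbed by an automorphism of $E$ when $\Aut(E)$ strictly contains the generic stabilizer, which by the simple wall hypothesis forces $E\cong E_1\oplus E_2$. Rigidifying $\sigma$ against the remaining one-parameter subgroup of $\Aut(E_1\oplus E_2)$, and using $\Hom(E_i,E_j)=0$ for $i\neq j$, produces a canonical isomorphism $X^{\csta}\cong M_1\times M_2$; the $\csta$-fixed part of the equivariant symmetric obstruction theory on $X$ restricts to the product of the symmetric obstruction theories on $M_1$ and $M_2$. Consequently
\[
\deg[X^{\csta}]\virt=\deg[M_1]\virt\cdot\deg[M_2]\virt.
\]

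For the weight contribution $\lam_k$, at a point $p_k=(E_1\oplus E_2,\sigma)$ the moving part of the Zariski tangent space decomposes as $\Ext^1(E_1,E_2)\oplus\Ext^1(E_2,E_1)$, with the two summands carrying opposite $\csta$-weights determined by the character through which the one-parameter subgroup of $\Aut(E_1\oplus E_2)$ acts on $L(E)$. Serre duality on the Calabi-Yau three-fold $S$ gives $\Ext^1(E_i,E_j)\cong\Ext^2(E_j,E_i)\dual$, and together with the vanishings $\Hom(E_i,E_j)=0$ for $i\neq j$ and $\Ext^{\le -1}(E,E)=0$ reduces the weighted sum $\sum_{j\neq 0}n_{k,j}/j$ to $\pm\chi(E_1,E_2)$. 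A parity analysis of $n_k$, exploiting the symmetry of the obstruction theory to pair moving tangent weights with moving obstruction weights of opposite sign, then shows that $\lam_k=(-1)^{\chi(E_1,E_2)-1}\chi(E_1,E_2)$ is independent of $k$, so the second clause of Theorem~\ref{thm1.2} yields the asserted formula.

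The main obstacle will be the identification of $X^{\csta}$ with $M_1\times M_2$ together with the verification that the restricted symmetric obstruction theory agrees with the product obstruction theory on $M_1\times M_2$; this requires careful analysis of the pair construction in Proposition~\ref{p.2.10} at the strictly $\tau_0$-semistable locus, where the automorphism group jumps from $\csta$ to $(\csta)^2$. Once these identifications are fixed, the weight decomposition and the parity bookkeeping for the sign $(-1)^{n_k-1}$ are formal consequences of Serre duality and the simple wall vanishings.
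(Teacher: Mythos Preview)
Your overall strategy matches the paper's: apply Theorem~\ref{thm1.2} after invoking Proposition~\ref{p.2.10} and Lemma~\ref{p2.8}, identify the fixed locus, and compute the weights via Serre duality. However, there is a genuine gap in your bookkeeping involving the integer $\nu$ from~\eqref{defnu}.

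Your claimed isomorphism $X^{\csta}\cong M_1\times M_2$ is not correct as an isomorphism of \DM stacks. By Lemma~\ref{split}, a point $(E_1\oplus E_2,\sigma)\in X$ has automorphism group $\ZZ_\nu$, not the trivial group, so $X^{\csta}$ is a $\ZZ_\nu$-gerbe over $M_1\times M_2$ and
\[
[X^{\csta}]\virt=\tfrac{1}{\nu}\,[M_1]\virt\times[M_2]\virt,
\]
not what you wrote. Correspondingly, the $\csta$-weights on the moving tangent space are not $\pm 1$: since the one-parameter subgroup $\lambda\mapsto\lambda\cdot\id_{E_1}\oplus\lambda^{-1}\cdot\id_{E_2}$ acts on $L(E)$ with weight $\nu$, the $T$-action on $\Ext^1(E_2,E_1)$ and $\Ext^1(E_1,E_2)$ has weights $\pm\tfrac{1}{\nu}$. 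Hence the sum $\sum_j n_{k,j}/j$ picks up an extra factor of $\nu$, which exactly cancels the $\tfrac{1}{\nu}$ from the gerbe. Your two omissions compensate each other, so your final formula is correct, but the argument as written asserts two individually false identities. The paper's proof makes precisely this cancellation explicit; you should track $\nu$ through both the automorphism count and the weight computation.
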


In \S2, we show that a simple wall crossing $M_\pm\subset \cM$ in
$D^b(\mathrm{Coh} S)$ is described as a simple flip
$[X_\pm/\CC^*]\subset [X/\CC^*]$. In \S3, we construct an intrinsic
blow-up of $\CC^*$-quotients and study the induced perfect
obstruction theory. In \S4, we construct the master space and in
\S5, we apply the virtual localization formula of \cite{GrPand} to
obtain Theorem \ref{thm1.2} and Corollary \ref{coro1.3} by careful
local calculations. In the Appendix, we discuss an analogous wall
crossing result for non-symmetric obstruction theories.

\black

\section{Donaldson-Thomas invariants and wall crossing }

In this section, we explain how a simple wall crossing (Definition
\ref{assum1}) can be described by a simple flip (Definition
\ref{def-master1}) in a global $\CC^*$ quotient equipped with an
equivariant obstruction theory. In later sections, we will work
out a wall crossing formula for simple flips.

%We assume throughout the paper that $\cM$ is proper. \black
%If the stability condition of certain moduli problem changes then
%the variation of the moduli space of stable objects may be described
%by proper $\CC^*$-flips. In this section, we discuss when this can
%be the case and apply our wall crossing formula.

\subsection{Simple wall crossing}\label{subsection2.1}

In this paper, we fix a smooth projective Calabi-Yau three-fold $S$.
Let $\tau_0$ be a stability condition in some abelian subcategory
$\cA_0$ of the derived category $D^b(\mathrm{Coh}(S))$ of bounded
complexes of coherent sheaves. Let $\cM$ be the moduli stack of
$\tau_0$-semistable objects, with fixed Chern classes. By tensoring
with some power  of $\cO_S(1)$ if necessary, we may assume
$\chi(E)\ne 0$ for $E\in \cM$.

Let $\tau_\pm$ be nearby stability conditions of $\tau_0$ and let
$M_\pm$ be the moduli stack of $\tau_\pm$-stable objects. In this
paper, we consider the following case of \emph{simple} wall
crossing.

\begin{defi}\label{assum1} We say $M_+$ is obtained from $M_-$ by a
\emph{simple wall crossing} in $\cM$ if the following conditions
are satisfied:
\begin{enumerate}
\item Strictly $\tau_0$-semistable objects $E$ (i.e. semistable
but not stable) in $\cM$ have $\mathrm{gr}(E)=E_1\oplus E_2$ with
$E_1\in M_1$ and $E_2\in M_2$, where $M_i$ are the moduli stacks of $\tau_0$-stable objects
of Chern classes equal to $c(E_i)$. Further, both $M_1$ and $M_2$ are proper
separated \DM  stacks;
\item the factors
$E_i\in M_i$ in (1) satisfy $\chi(E_1)=\chi(E_2)$; there is a
$B\in K(S)$ such that
$\chi(E_1\otimes B)\ne \chi(E_2\otimes B)$;
\item $\tau_+$-stable (resp. $\tau_-$-stable) objects are either
$\tau_0$-stable or non-split extensions of $E_2$ by $E_1$ (resp.
$E_1$ by $E_2$) for some strictly $\tau_0$-semistable objects
$E_1\oplus E_2$ with $E_1\in {M_1}$ and $E_2\in {M_2}$;
%\item The moduli space of $\tau_0$-stable objects with Chern
%classes as those of $E_1$ (resp. $E_2$) is a proper separated \DM
%stack which we denote by ${M_1}$ (resp. ${M_2}$).
\item $M_\pm$ are proper separated \DM stacks.
\end{enumerate}
\end{defi}

It is immediate that $M_\pm$ are open substacks of $\cM$. The wall crossing formula
compares the degrees of the virtual fundamental classes of $M_+$ and $M_-$.

We remark that part of the Definition requires that all elements in $M_1$ (resp. $M_2$)  have identical
Chern classes $c(E_1)$ (resp. $c(E_2)$). The case where the wall crossing occurs at different
$c(E_1)$ and $c(E_2)$ can be treated the same way since then $M_1$ and $M_2$ splits to pairs
$(M_{1,j},M_{2,j})$, where elements in $M_{1,j}$ and $M_{2,j}$ have identical Chern classes.

%\begin{exam}\label{ex4.2}\emph{
%%The moduli space of stable pairs $\cO_X\mapright{s} F$ with
%%coherent $F$ whose supports are curves of a given homology class.
%Let $\beta\in H_2(S,\ZZ)$ which may be the sum of at most two
%irreducible classes. Fix $n\in \ZZ$. Let $F$ be a pure sheaf on $S$
%with Hilbert polynomial $\chi(F\otimes
%\cO_S(k))=k(\cO_S(1)\cdot\beta)+n$ and $s\in H^0(F)$. Let $p(k)=\tau
%k+\lambda$ with nonnegative rational numbers $\tau, \lambda$.
%%\begin{defi}
%A pair $(F,s)$ is \emph{p-stable} if the following hold:
%\begin{enumerate} \item for any nontrivial proper subsheaf $G$ of
%$F$
%\[ \frac{\chi(G\otimes L^k)}{r(G)}<\frac{\chi(F\otimes L^k)+p(k)}{r(F)},\qquad k \gg 0
%\]
%where $r(-)$ is the leading coefficient of the Hilbert polynomial
%$\chi(-\otimes L^k)$; \item for any nontrivial proper subsheaf $G$
%with $s\in H^0(G)$,
%\[ \frac{\chi(G\otimes L^k)+p(k)}{r(G)}<\frac{\chi(F\otimes L^k)+p(k)}{r(F)},\qquad
%k\gg 0.
%\]
%\end{enumerate}
%A pair $(F,s)$ is \emph{p-semistable} if the above hold with $<$
%replaced by $\le$. A pair is \emph{p-strictly semistable} if it is
%p-semistable but not p-stable.
%%\end{defi}
%The moduli space $P^p_n(X,\beta)$ of p-semistable pairs with
%$\chi(F)=n$ and support in $\beta$ was constructed by geometric
%invariant theory as a projective scheme and it undergoes flips as we
%vary $p(k)$ from $\infty\cdot k+\infty$ to $0$. The moduli space
%$P^p_n(X,\beta)$ remains the same for $\tau>0$ or $\lambda$
%sufficiently large. There are finitely many walls and at each wall
%Assumption \ref{assum1} is satisfied. }\end{exam}

\subsection{Simple flip}\label{subsection2.2}
\def\ti{\tilde}

In this subsection, we introduce our geometric set-up for simple
wall crossing. Let $X$ be a separated Deligne-Mumford stack of
finite type acted on by $\CC^*$. Let $\cM=[X/\CC^*]$ be the
quotient stack.

Let $X^{\CC^*}$ denote the fixed point locus in $X$. We let
$X_0\sub X-X^{\CC^*}$ be the open substack of $x\in X$ so that the
orbit ${\CC^*}\cdot x$ is 1-dimensional and closed in $X$, and let
$$\sigc_\pm=\{x\in X-X_0\cup X^{\CC^*}\mid \lim_{\sigma\to 0} {\sigma^{\pm 1}}\cdot x\in
X^{\CC^*}\}.
$$
We then form
\beq\label{de1.2}\Sigma_{\pm}=\sigc_\pm\cup
X^{\CC^*}, \eeq which are $\CC\sta$-invariant, and let
\beq\label{de1.3} X_\pm=X-\Sigma_{\mp}\subset X, \qquad
M_\pm=[X_\pm/{\CC^*}]\subset\cM=[X/\CC\sta]. \eeq

\begin{defi}\label{def-master1}
We say that the pair $M_\pm=[X_\pm/\CC\sta]\sub \cM$ is a
\emph{simple flip} if the following hold:
\begin{enumerate}
\item[(i)] The fixed locus $X^{\CC\sta}$ is a proper \DM stack;
\item[(ii)] $\Sigma_\pm$ are closed in $X$;
\item[(iii)]  both $M_\pm=[X_\pm/\CC\sta]$ are proper separated \DM
stacks;
\item [(iv)] the pair $X_\pm\sub X$ satisfies the Simpleness Condition stated below.
\end{enumerate}
\end{defi}
%If $[X_\pm/\CC\sta]\sub\cM$ is a simple flip, then $X_0$,
%$\sigc_+$, $\sigc_-$ and $X^{\CC^*}$ form a partition of $X$. Thus
%Since $\Sigma_\pm$ are closed, $X_\pm$ are $\CC\sta$-invariant
%open substacks in $X$. Also, all ${\CC^*}$-orbits of $X_\pm$ are
%closed. Thus $[X_\pm/\CC\sta]$ are automatically separated \DM
%stacks.

We first fix our convention on discrete valuation rings and their finite extensions.
Let $R$ be a discrete valuation ring. We always denote by $K$ its field of fractions;
$\xi$ and $\xi_0$ be the generic and closed points of $\spec R$. When $\ti R\supset R$ is a finite
extension, we denote by $\ti K$, $\ti \xi$ and $\ti \xi_0$ the corresponding field of fractions, the generic point
and the closed point. We let $\zeta$ and $\ti\zeta$ be uniformizing parameters of $R$ and $\ti R$, respectively.
We denote by $\imath(\ti R)$ the ramification index of $\ti R\supset R$; namely, $\zeta=\ti\gamma\cdot \ti\zeta^{\imath(\ti R)}$ 
for an intertible $\ti\gamma\in \ti R$.
In case $f:\spec R\to X$ is a morphism, we denote by $\ti f:\spec \ti R\to X$ the composite $\spec \ti R\to \spec R\to X$.
Given $g: \spec K\to \csta$, we denote by $g\cdot f: \spec K\to X$ the morphism induced by
the $\csta$-action on $X$.

\begin{simp}\label{simpcond} Let $R$ be a discrete valuation ring over $\CC$.
% with fractional field $K$; let $\zeta\in R$ be a uniformizing parameter 
%and let $\xi$ and $\xi_0$ denote the generic and closed points of $\spec R$.  
%Let $M_\pm\sub\cM$ be a simple flip (Definition \ref{def-master1}). 
Let $f_+:\spec R\to X$; let $g: \spec K\to
{\csta}$ be of the form $g\sta(t)=\alpha\cdot
\zeta^a$ for an invertible $\alpha\in R$ and $a>0$.
Then the following hold:
\begin{enumerate}
\item[(a)]  Suppose $f_+(\xi)\in X_0$,
$f_+(\xi_0)\in \Sigma_+^\circ$, and $g\cdot
f_+: \spec K\to X_0$ extends to $f_-: \spec R\to X$ so that $f_-(\xi_0)\in\Sigma_-^\circ$.
Then there is a finite extension $\tilde R\supset R$ such that for any
invertible $\ti\beta\in\ti R$ and any integer $0< b< \imath(\ti R)\cdot a$, a morphism $\ti h:\spec \ti K\to {\csta}$ of the form
$\ti h^{\ast}(t)=\ti\beta\cdot \ti\zeta^{b}$ has the property  that
$\ti h\cdot \ti f_+:\spec \tilde K\to X$ extends to $(\ti h\cdot \ti f_+)^\sim: \spec \tilde R\to X$ with 
$(\tih\cdot \ti f_+)^\sim(\tilde \xi_0)\in X^{\csta}$;
\item[(b)]  Suppose $f_+: \spec R\to \Sigma_+^\circ$. Then possibly after a finite extension $\tilde
R\supset R$, $\ti g\cdot \ti f_+: \spec \tilde K\to \Sigma_+^\circ$ extends to $(\ti g\cdot \ti f_+)^\sim: \spec \tilde R\to \Sigma_+$ 
such that $(\ti g\cdot \ti f_+)^\sim(\tilde \xi_0)\in X^{\csta}$.
\item[(c)]   Both (1) and (2) hold with``$+$" replaced by``$-$", $a>0$ replaced by $a<0$, and $0<b< \imath(\ti R)\cdot a$ replaced by 
$\imath(\ti R)\cdot a< b<0$.
\end{enumerate}
\end{simp}

\begin{exam}
%Let $\bar X$ be a projective scheme on which $\CC^*$ acts
%linearly. Let $X$ be the semistable locus in $\bar X$ in GIT
%sense. Then $X$ satisfies the above conditions and $X_\pm/T$ are
%$\CC^*$-flips. For example, l
Let $V_+,V_-,V_0$ be vector spaces on which $\CC^*$ acts with
weights $1$, $-1$, and $0$ respectively. Let $V=V_+\times V_0\times V_-$
and $X=\PP V-(\PP V_+\cup \PP V_-)$. Then $\Sigma_\pm=V_\pm\otimes
\cO_{\PP V_0}(1)$ and $X_\pm=\PP V-\PP (V_0\times V_\mp)$. Hence
$M_+=\PP V_+\times \PP (V_0\times V_-)$ and $M_-=\PP V_-\times \PP
(V_0\times V_+)$.

Suppose in addition that $M_\pm$ have symmetric obstruction theories.
Since $M_\pm$ are smooth, their virtual cycles 
$[M_\pm]\virt$ are equal to the Euler classes $e(\Omega_{M_\pm})$. Therefore the wall crossing formula is
\[ \deg[M_+]\virt-\deg[M_-]\virt=
(-1)^{n_+-n_--1}(n_+-n_-)\cdot \deg[X^{\CC^*}]\virt,
\]
where $n_\pm=\dim V_\pm$ is the dimension of the $\pm 1$-weight
space in the normal space to $X^{\CC^*}$ in $X$.
\end{exam}

\begin{exam}\label{ex2}(\cite{Simp})
More generally, suppose there is a ${\CC\sta}$-equivariant
separated proper scheme $W$ such that
$$X=W-\{x\in W \,|\, \lim_{t\to\infty}t\cdot x\in F^+\}-
\{x\in W \,|\, \lim_{t\to 0}t\cdot x\in F^-\}
$$
where $F^\pm$ are parts of a partition $W^{\CC\sta}=F^+\sqcup
F^-\sqcup F^0$ of the fixed point set $W^{\CC^*}$ of $W$,
satisfying %\footnote{We use $\sqcup$ to mean disjoint union.}
\begin{enumerate}
\item if $u\preceq v$ and $v\in F^+$, $u\in F^+$;
\item if $u\preceq v$ and $u\in F^-$, then $v\in F^-$;
\item if $u\precneqq v$ and $v\in F^0$, then $u\in F^+$;
\item if $u\precneqq v$ and $u\in F^0$, then $v\in F^-$,
\end{enumerate}
with respect to the ordering $\preceq$ defined as follows: For
$x,y\in W^{\CC\sta}$, $x\preceq y$ if there exists a sequence
$x_1,\cdots, x_n\in W$ such that
$$\lim_{t\to 0}t\cdot x_1=x,\ \lim_{t\to \infty}t\cdot x_n=y,\
\lim_{t\to \infty}t\cdot x_k=\lim_{t\to 0}t\cdot x_{k+1}, \quad
\forall k.
$$
Then $M_\pm=[X_\pm/\CC^*]$ is a simple flip. The proof of the simpleness condition is essentially contained in \cite[Theorem 11.1]{Simp}. Notice that all GIT $\CC^*$-flips are special cases of this example. 
\end{exam}

\subsection{Simple wall crossings and simple flips}\label{subsection2.3}

In this subsection, we investigate a case where a simple wall
crossing of moduli of derived category objects is a simple flip.

Let $M_\pm\sub \cM$ be a moduli of semistable derived category objects as in
Definition \ref{assum1}. We first show that $\cM$ can be written as
a global $\CC^*$ quotient of a separated \DM stack $X$. Fix $B\in
K(S)$ that distinguishes elements in $M_1$ and $M_2$, as stated in Definition
\ref{assum1}.
%\footnote{Note that if $[E_1]-[E_2]$ is a non-torsion classes in $K(S)$, 
%such $B\in K(S)$ exists because the
%bilinear pairing $\chi(-\otimes -):K(S)_\QQ\times K(S)_\QQ\to \QQ$
%is perfect.}

\begin{defi} \label{def-det}
For $E\in \cM$, let
$$L(E):=\det R\Gamma(E\otimes B)^{\chi(E)}\cdot
\det R\Gamma(E)^{-\chi(E\otimes B)},
$$
where $\det
R\Gamma(E)=\otimes_i ( \wedge^{top}H^i(E))^{(-1)^i}$.
\end{defi}

Observe that if $E=E_1\oplus E_2$ is strictly $\tau_0$-semistable,
then any $\varphi\in\mathrm{Aut}(E)/\CC^*$ ($\cong \CC\sta$), where
$\CC\sta\le\mathrm{Aut}(E)$ is the subgroup $\CC\sta
\cdot\mathrm{id}_E$, has a representative $\lambda\cdot
\mathrm{id}_{E_1}\oplus \lambda^{-1}\cdot \mathrm{id}_{E_2}$. The
weight of this circle action on $L(E)$ is
\begin{equation}\label{defnu} \nu:=(\chi(E_1\otimes
B)-\chi(E_2\otimes B))\cdot \chi(E)\ne 0.
\end{equation} 
Note that by assumption, $\chi(E)=2\chi(E_1)=2\chi(E_2)\ne 0$.

Note that the ordering of $M_1$ and $M_2$ is determined by the choice of $\tau_+$ and $\tau_-$
according to Definition \ref{assum1}. Thus by
interchanging $\tau_+$ and $\tau_-$ if necessary, we may and will assume $\nu>0$.

\begin{defi}
Two pairs $(E_1,\sigma_1)$ and $(E_2,\sigma_2)$ where
$\sigma_i:L(E_i)\mapright{{}_{\cong}} \CC$, are \emph{isomorphic} if
there exists a $\varphi:E_1\mapright{{}_{\cong}} E_2$ such that
$\sigma_2\circ L(\varphi)=\sigma_1$.
\end{defi}

The automorphism group $\mathrm{Aut}(E,\sigma)$ of such a pair
$(E,\sigma)$ is the group of all isomorphisms from $(E,\sigma)$ to
itself. Obviously, $\CC\sta\cdot \id_E\le \Aut(E,\sigma)$.

\begin{lemm}\label{split}
$\mathrm{Aut}(E,\sigma)/\CC^*\cdot \mathrm{id}_E$ is a finite group.
\end{lemm}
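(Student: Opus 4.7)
The plan is to split into cases by the Jordan--H\"older type of $E \in \cM$ as a $\tau_0$-semistable object, use Schur's lemma together with the vanishing of $\Hom$ between non-isomorphic stable objects of equal slope, and finally impose the condition that $\sigma$ is preserved. For $\tau_0$-stable $E$, Schur gives $\End(E) = \CC$ and the quotient $\Aut(E,\sigma)/\CC\sta\cdot\id_E$ is trivially finite, so the interesting case is when $E$ is strictly $\tau_0$-semistable.

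For strictly $\tau_0$-semistable $E$, Definition \ref{assum1}(1) gives $\mathrm{gr}(E) = E_1 \oplus E_2$ with $E_i \in M_i$. Since $B \in K(S)$ distinguishes $E_1$ and $E_2$, their Chern classes are distinct, and therefore $\Hom(E_1, E_2) = \Hom(E_2, E_1) = 0$ by stability at equal $\tau_0$-slope. I would first argue that any $\phi \in \End(E)$ must preserve the subobject $E_1 \subset E$: otherwise the composition of $\phi|_{E_1}$ with the quotient $E \twoheadrightarrow E_2$ would yield a nonzero element of $\Hom(E_1, E_2) = 0$. Hence $\phi$ induces scalars $\lambda_1, \lambda_2 \in \CC$ on $E_1$ and $E_2$ respectively. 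When $E$ is a non-split extension $0 \to E_1 \to E \to E_2 \to 0$, compatibility of $\phi$ with the nonzero extension class in $\Ext^1(E_2, E_1)$ forces $\lambda_1 = \lambda_2$, so $\Aut(E) = \CC\sta \cdot \id_E$ and again the quotient is trivial.

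The remaining and most delicate case is the split one: $E = E_1 \oplus E_2$, for which $\Aut(E) = \CC\sta \times \CC\sta$. Here I would compute the weights of this torus on $L(E)$ by decomposing $R\Gamma(E) = R\Gamma(E_1)\oplus R\Gamma(E_2)$ and likewise for $E\otimes B$. A short calculation gives that $(\lambda_1,\lambda_2)$ acts on $L(E)$ with $\lambda_1$-weight $\chi(E_1\otimes B)\chi(E_2) - \chi(E_1)\chi(E_2\otimes B)$; using $\chi(E_1) = \chi(E_2) = \chi(E)/2$ this simplifies to $\nu/2$, and by symmetry the $\lambda_2$-weight is $-\nu/2$. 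Since $\nu \ne 0$ by (\ref{defnu}), the condition $\lambda_1^{\nu/2}\lambda_2^{-\nu/2} = 1$ defining $\Aut(E,\sigma)$ cuts out a subgroup of $\CC\sta \times \CC\sta$ whose quotient by the diagonal $\CC\sta\cdot\id_E$ is the cyclic group $\mu_{|\nu/2|}$ of $|\nu/2|$-th roots of unity, which is finite. The main obstacle throughout is no more than careful bookkeeping of these weights; no input beyond Schur, the distinguishing property of $B$, and Definition \ref{assum1} is required.
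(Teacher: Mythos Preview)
Your proof is correct and follows essentially the same strategy as the paper's: split into the non-split case (where $\Aut(E)=\CC^\ast\cdot\id_E$ so the quotient is trivial) and the split case $E=E_1\oplus E_2$ (where one computes the torus weight on $L(E)$ and sees that preserving $\sigma$ imposes a root-of-unity condition). You supply more detail than the paper does in the non-split case, and in the split case you parameterize by the full torus $(\lambda_1,\lambda_2)$ rather than the paper's antidiagonal representative $(\lambda,\lambda^{-1})$; this yields the quotient $\mu_{|\nu/2|}$, whereas the paper's parameterization gives the condition $\lambda^\nu=1$ (the two are consistent since $(\lambda,\lambda^{-1})\mapsto\lambda$ is a double cover of $\Aut(E)/\CC^\ast$), and either way finiteness follows.
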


\begin{proof}
Unless $E=E_1\oplus E_2$, the quotient group is trivial. When
$E=E_1\oplus E_2$, any $\lambda\cdot \mathrm{id}_{E_1}\oplus
\lambda^{-1}\cdot \mathrm{id}_{E_2}$
should satisfy
$\lambda^{\nu}=1$
with $\nu$ defined in \eqref{defnu}. Hence the group is finite.
\end{proof}

%The following proposition is straightforward.

\begin{prop}\label{p.2.10} Let $M_\pm\sub \cM$ be as in Definition
\ref{assum1} and suppose for $E\in \cM$ $Ext^{\le -1}(E,E)=0$.
\begin{enumerate}
\item[(1)] The groupoid $X$ of families of pairs $(E,\sigma)$ of $E\in
\cM$ and $\sigma:L(E)\cong \CC$ is a separated \DM stack with a
strict representable\footnote{The strictness is defined in \cite{C-action}; 
the $\csta$-action is representable if the multiplication morphism
$\csta\times X\to X$ is representable.}  $\CC\sta$-action by $t\cdot (E,\sigma)=(E,t\sigma)$; the fixed
point locus is $M_1\times M_2 = \{(E_1\oplus
E_2,\sigma)\,|\,E_i\in
M_i\}/\cong$.
\item[(2)] $\cM\cong [X/\CC^*]$; let $X_\pm\sub X$ be as defined in \eqref{de1.3},
then $M_\pm\cong [X_\pm/\CC^*]$.
\item[(3)] The pair $M_\pm\cong [X_\pm/\CC^*]$ is a simple flip
(Definition \ref{def-master1}).
\end{enumerate}
\end{prop}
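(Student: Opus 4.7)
The plan is to verify the three parts in order, with part (3) being the technical heart. For part (1), the strategy is first to check that $X$ is DM by showing $\Aut(E,\sigma)$ has finite quotient by the scalar subgroup $\CC^*\cdot\id_E$. A direct weight computation on $L(E)=\det R\Gamma(E\otimes B)^{\chi(E)}\otimes\det R\Gamma(E)^{-\chi(E\otimes B)}$ shows that $\CC^*\cdot\id_E$ acts with weight $\chi(E)\chi(E\otimes B)-\chi(E)\chi(E\otimes B)=0$, so this scalar lies automatically inside $\Aut(E,\sigma)$ and we rigidify it out; Lemma~\ref{split} then yields finite stabilizers. Separatedness follows from the valuative criterion applied to families $(E,\sigma)$, using that $\tau_0$-semistable extensions of one-parameter families are controlled by the properness of $M_1$ and $M_2$ from Definition~\ref{assum1}(1). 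For the fixed locus, $(E,\sigma)$ is $\CC^*$-fixed iff $\Aut(E)$ contains a one-parameter subgroup whose image under the weight character of $L$ is all of $\CC^*$; this forces $E$ to split as $E_1\oplus E_2$ with the subgroup being $\lambda\id_{E_1}\oplus\lambda^{-1}\id_{E_2}$ of nonzero weight $\nu$, and transitivity of this subgroup on $\sigma$ yields $X^{\CC^*}\cong M_1\times M_2$.

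For part (2), the forgetful map $X\to\cM$, $(E,\sigma)\mapsto E$, is a $\CC^*$-torsor after the rigidification of part (1), giving $\cM\cong[X/\CC^*]$. To identify $X_\pm$ with the $\tau_\pm$-stable locus, I would use $\CC^*$-equivariant deformation theory at a fixed point $(E_1\oplus E_2,\sigma_0)$: the tangent space $\bigoplus_{i,j}\Ext^1(E_i,E_j)$ decomposes into weight $0$ (tangent to $M_1\times M_2$), strictly positive on $\Ext^1(E_2,E_1)$, and strictly negative on $\Ext^1(E_1,E_2)$. A non-split extension $0\to E_1\to F\to E_2\to 0$ then flows into $X^{\CC^*}$ as $t\to 0$, placing it in $\sigc_+\subset\Sigma_+$; symmetrically a $0\to E_2\to G\to E_1\to 0$ extension lies in $\Sigma_-$. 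Combined with Definition~\ref{assum1}(3), this identifies $X_\pm$ with the preimage of $M_\pm$, whence $M_\pm\cong[X_\pm/\CC^*]$.

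For part (3), conditions (i) and (iii) are immediate from part (1) together with Definition~\ref{assum1}(1)(4); condition (ii) holds because $\Sigma_+$ is the preimage in $X$ of the closed set $\cM\setminus M_-$ (closed by openness of $\tau_-$-stability), and similarly for $\Sigma_-$. The main obstacle will be the Simpleness Condition (iv). The plan is to reduce each of (a), (b), (c) to an explicit order-of-vanishing computation in an \'etale normal form near a fixed point, provided by $\CC^*$-equivariant deformation theory. For (a), the hypothesized $f_+$ corresponds locally to coordinates $x_+$ (of order zero in $\zeta$, in the $+$-weight direction) and $x_-$ (of positive order $d$, in the $-$-weight direction), and the requirement that $g\cdot f_+$ extend to $\Sigma_-^\circ$ pins down $d$ in terms of $a$ and the normal weights. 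For the intermediate rescaling $\ti h^\ast(t)=\ti\beta\ti\zeta^b$ with $0<b<\imath(\ti R)\cdot a$, a direct calculation shows both rescaled coordinates vanish strictly at $\ti\zeta=0$, so the limit sits in $X^{\CC^*}$; parts (b) and (c) are analogous. The hard part will be upgrading the formal normal form to an actual \'etale local model governing family extensions, which I expect to handle via the $\csta$-equivariant embedding assumption of Theorem~\ref{thm1.2} through a Luna-type slice argument.
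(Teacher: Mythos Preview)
Your outline for (1) and (2) is broadly in the right spirit, but your plan for the Simpleness Condition in (3) has a real gap and diverges substantially from the paper's method.

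First, note that Proposition~\ref{p.2.10} does \emph{not} assume $X$ embeds $\csta$-equivariantly in a smooth DM-stack; that hypothesis enters only later in Theorem~\ref{thm1.2}. So your proposed Luna-type slice argument is borrowing an assumption you don't have. More seriously, even granting such a slice, your reduction of (a) to an ``order-of-vanishing computation in an \'etale normal form near a fixed point'' oversimplifies the geometry: the morphism $f_+$ has $f_+(\xi)\in X_0$, a general point, with only $f_+(\xi_0)\in\Sigma_+^\circ$; nothing guarantees the entire one-parameter family, together with all its $\csta$-translates, stays inside a single \'etale chart centered at one fixed point. The paper's proof (Lemma~\ref{lem-auxprop}) confronts this directly: it pulls the two-parameter family $\spec R\times\csta\to X$ through an \'etale chart $U\to X$, compactifies to $\spec R\times\Po$, and \emph{resolves the indeterminacy} of the induced rational map by successive blow-ups at $\csta$-fixed points. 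The fiber over $\xi_0$ becomes a chain $B_1,\ldots,B_l$ of $\csta$-invariant rational curves, and the Simpleness Condition is extracted from the combinatorics of this chain.

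The crucial step you are missing is how to rule out ``extra'' rational curves in this chain (i.e.\ to show $l=m$ in the paper's notation). This is where the paper uses the specific construction of $X$ as a moduli of pairs $(E,\sigma)$: comparing two extensions $\ti f_m$ and $\ti f_-$ that differ by a $\csta$-translate $\ti e$ with $\ti e^\ast(t)=\ti\beta\ti\zeta^r$, $r<0$, one writes down the induced map on determinant lines and exploits that $L(\alpha)$ is \emph{independent of scaling} the isomorphism $\alpha$ of the underlying objects. This forces $\sigma_m\circ L(\ti\alpha)\circ\sigma_-^{-1}$ to extend invertibly over $\spec\ti R$, contradicting $r<0$. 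This argument genuinely uses the line bundle $L$ and cannot be replaced by a purely local linear-algebra computation in a tangent space model. Your plan has no analogue of this step.

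For (2), your tangent-space weight decomposition correctly predicts which direction an extension flows, but it only gives infinitesimal information; to show that $\lim_{t\to 0}t\cdot(E,\sigma)$ actually lands in $X^{\csta}$ you need to produce the limiting family. The paper does this explicitly by building $\cE$ on $S\times\Ao$ from the extension class $\alpha\cdot e\in\Ext^1(E_2,E_1)\otimes\Gamma(\cO_{\Ao}(0))$ and computing $L(\cE)\cong L(E)\otimes\cO_{\Ao}(\nu'0)$ via the distinguished-triangle diagram~\eqref{dt-C}. Your sketch would need this or an equivalent construction to be complete.
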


\begin{proof}
By \cite{Lieb}, $\cM$ is an Artin stack locally of finite
presentation. The relative version of the construction of $L(E)$
from $E$ shows that $X$ is an Artin stack. Since every
$(E,\sigma)\in X$ has finite stabilizer, $X$ is actually a \DM
stack by slice argument.
The strictness and the representable property of the $\csta$-action
follows directly from the construction.

It follows from the property of stability and from Definition \ref{assum1} that
$X^\csta$ consist of $E_1\oplus E_2$, $E_i\in M_i$. We now prove that $(E,\si)\in X_\pm$
if and only if $E$ is $\tau_\pm$-stable.
For this, we need a description of $\Sigma_\pm^\circ$.

Let $E\in M_+$ be $\tau_+$-stable but not $\tau_0$-stable. By Definition \ref{assum1}, $E$ is a
non-trivial extension of an $E_2\in M_2$ by an $E_1\in M_1$. Let $\alpha\in \Ext^1(E_2,E_1)$ be the
extension class of the distinguished triangle $E_1\to E\to E_2\to$. Let $0\in \Ao$ be the origin; let $e\in\Gamma(\cO_{\Ao}(0))$
be the image of $1\in\Gamma(\cO_{\Ao})\sub \Gamma(\cO_{\Ao}(0))$. Then $e$ vanishes simply at $0\in {\Ao}$.
Let $p_S$ and $p_{\Ao}$ be the projections of $S\times {\Ao}$. Then 
$$\alpha\cdot e\in\Ext^1_{S\times {\Ao}}(p_S\sta E_2\otimes p_{\Ao}\sta\cO_{\Ao}(-0),p_S\sta E_1)=
\Ext^1(E_2,E_1)\otimes\Gamma(\cO_{\Ao}(0))
$$
defines an $\cE\in D^b(\text{Coh}(S\times {\Ao}))$ that fits into the follwoing diagram of distinguished triangles
\beq\label{dt-C}
\begin{CD}
p_S\sta E_1@>>> \cE@>>> p_S\sta E_2\otimes p_{\Ao}\sta\cO_{\Ao}(-0)@>+1>>\\
@| @VVV @VVV\\
p_S\sta E_1 @>>> p_S\sta E @>>> p_S\sta E_2 @>+1>>\\
@. @VVV @VVV\\
@. \iota_0\sta E_2@=  \iota_0\sta E_2\\
@.@V+1VV @V+1VV\\
\end{CD}
\eeq
where $\iota_0: S\times 0\lra S\times {\Ao}$ is the central fiber.

We now form a line bundle on ${\Ao}$
$$L(\cE)=\det Rp_{{\Ao}\ast}(\cE\otimes p_S\sta B)^{\chi(E)}\otimes 
\det Rp_{{\Ao}\ast}(\cE)^{-\chi(E\otimes B)}.
$$
Using \eqref{dt-C}, we obtain canonical isomorphisms
$$L(\cE)\otimes L(\iota_0\sta E_2)\cong L(p_S\sta E)\cong L(E)\otimes\cO_{\Ao}.
$$
Since canonically
$L(\iota_0\sta E_2)\cong \cO_{\Ao}(-\nu' 0)$, where $\nu'=\frac12\nu$ and $\nu>0$ is as in \eqref{defnu},
we have
\beq\label{iso-det}
L(\cE)\cong L(E)\otimes\cO_{\Ao}(\nu' 0).
\eeq

Now let $\si: L(E)\to\CC$ be a fixed isomorphism; we fix an isomorphism $\ti\si: L(\cE)\cong\cO_{\Ao}$.
Then restricting the isomorphism \eqref{iso-det} to ${\Ao}-0$, for a $u\in\Gamma(\cO_{\Ao})$ that vanishes simply at $0\in {\Ao}$
we have
$$\ti\si= u^{\nu'}\cdot\sigma.
$$
Since $\cE|_{S\times({\Ao}-0)}\cong p_S\sta E|_{S\times({\Ao}-0)}$ and $\iota_0\sta \cE=E_1\oplus E_2$,
this proves
$\lim_{t\to 0} t\cdot (E,\si)=(E_1\oplus E_2, \ti \si|_0)\in X^\csta$. Hence $\tau_+$-stable but not $\tau_0$-stable objects
are all over $\Sigma_+\ucirc$.

For the other direction, suppose $(E,\si)\in X-X^\csta$ such that 
\beq\label{lim}
\lim_{t\to 0} t\cdot (E,\si)=(E_1\oplus E_2, \cdot) \in X^\csta.
\eeq
Let $R=\CC[t]_{(t)}$, localized at the ideal $(t)$. 
We let $(\cE,\ti\si)$ on $S\times \spec R$ be given by the morphism $\spec R\to X$ that is
given by the completion of the morphism $\csta\cong \{t\cdot (E,\si)\mid t\in\csta\}\to X$. Here the
completion exists because of \eqref{lim}. As usual, we 
denote by $\xi$ and $\xi_0$ the generic and the closed points of $\spec R$. 
By the construction of $\cE$, we have isomorphism
\beq\label{iso-A}
\varphi_\xi: \cE|_{S\times \xi}\mapright{\cong} p_S\sta E|_{S\times \xi}\quad
\text{where}\ p_S: S\times\spec R\to S.
\eeq
Thus $\Hom_{S\times\spec R}(p_S\sta E,\cE)$ is a non-trivial $R$-module. 
Let $\varphi\in \Hom_{S\times\spec R}(p_S\sta E,\cE)$ be such that $\varphi|_{S\times\xi}=\varphi_\xi$.
By replacing $\varphi$ by $t^c\varphi$ for some $c\le 0$, we can assume that $\varphi|_{S\times\xi_0}$ is
non-trivial. Namely, for the
closed embedding $\iota: S\times\xi_0\to S\times \spec R$, 
$\iota\sta\varphi: \iota\sta\cE=E_1\oplus E_2\to E$ is non-trivial.

Since $(E,\si)\notin X^\csta$, since there is a non-trivial $E_1\oplus E_2\to E$ and since both $E_i$ are
$\tau_0$-stable and $E$ is $\tau_0$-semistable, $E$ is either an extension of $E_2$ 
by $E_1$ or vice versa. By a parallel discussion as
above, $E$ must be an extension of $E_2$ by $E_1$. This proves that $X_-=X-\Sigma_+$.
The same holds for $X_+$. This proves (2).

Finally, it is direct to check that (3) holds for $X$, knowing that $M_\pm$ are proper and separated.

We prove the simpleness condition in the next Lemma.
\end{proof}

\begin{lemm}\label{lem-auxprop}
The pair $X_\pm\sub X$ satisfies the Simpleness Condition.
\end{lemm}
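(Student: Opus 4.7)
The plan is to reduce the Simpleness Condition to a valuation calculation on a weighted $\CC\sta$-representation, by establishing an \'etale local model of $X$ near $X^{\CC\sta}$ in the spirit of Example \ref{ex2}. The verification of (a), (b), (c) then reduces to elementary arithmetic with exponents.

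\emph{Local model.} Near a fixed point $p=(E_1\oplus E_2,\sigma_p)\in X^{\CC\sta}$, the assumption $\Ext^{\le -1}(E,E)=0$ and standard deformation theory for the pair $(E,\sigma)$ identify the formal neighborhood of $p$ in $X$ with a $\CC\sta$-equivariant closed subscheme of
$$\Ext^1(E_1,E_1)\oplus\Ext^1(E_2,E_2)\oplus\Ext^1(E_2,E_1)\oplus\Ext^1(E_1,E_2)$$
cut out by $\Ext^2$-valued equations. The induced $\CC\sta$-action is determined by the weight-$\nu$ action of $\{\lambda\id_{E_1}\oplus\lambda^{-1}\id_{E_2}\}\subset\Aut(E)$ on $L(E)$ computed in \eqref{defnu}; explicitly, $\CC\sta$ acts by conjugation with $\lambda$ satisfying $\lambda^\nu=t$. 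The first two summands are therefore $\CC\sta$-fixed, and $\Ext^1(E_2,E_1)$ (resp.\ $\Ext^1(E_1,E_2)$) has pure positive (resp.\ negative) $\CC\sta$-weight, which we denote by $n_+>0$ (resp.\ $-n_-<0$). Combined with the family construction in the proof of Proposition \ref{p.2.10}, this identifies $\Sigma_+^\circ$ locally with the stratum where the $\Ext^1(E_2,E_1)$-coordinate is nonzero and the $\Ext^1(E_1,E_2)$-coordinate vanishes, and symmetrically for $\Sigma_-^\circ$.

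\emph{Valuation argument.} For (a), write $f_+$ in local coordinates as $(x(\zeta),y(\zeta),z(\zeta))$ with $x(\xi_0)$ invertible, $z\in\Ext^1(E_i,E_i)\otimes R$, and $y=y_0\zeta^m$ with $y_0$ invertible. The scaling $g\cdot f_+$ multiplies $y$ by $g^{-n_-}=\alpha^{-n_-}\zeta^{-an_-}$, reducing its $\zeta$-valuation to $m-an_-$. The hypothesis that $g\cdot f_+$ extends to $f_-$ with $f_-(\xi_0)\in\Sigma_-^\circ$ forces the $y$-coordinate of $f_-$ to be a unit while the $x$-coordinate vanishes; i.e.\ $m=an_-$. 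Now, over a finite extension $\tilde R\supset R$ with uniformizer $\tilde\zeta$, the $\tilde\zeta$-valuation of $y$ becomes $\imath(\tilde R)m=\imath(\tilde R)an_-$, and for invertible $\tilde\beta\in\tilde R$ and $0<b<\imath(\tilde R)\cdot a$, the scaling $\tilde h\cdot \tilde f_+$ with $\tilde h^\ast(t)=\tilde\beta\tilde\zeta^b$ produces $x$-coordinate of $\tilde\zeta$-valuation $bn_+>0$ and $y$-coordinate of $\tilde\zeta$-valuation $(\imath(\tilde R)a-b)n_->0$. Thus $\tilde h\cdot \tilde f_+$ extends to $\spec\tilde R\to X$ with closed point $(0,0,z(\tilde\xi_0))\in X^{\CC\sta}$, proving (a). Condition (b) is similar and simpler: on $\Sigma_+^\circ$ one has $y\equiv 0$, and the scaling $\tilde g\cdot \tilde f_+$ increases the $\tilde\zeta$-valuation of $x$ to $\imath(\tilde R)an_+>0$ while preserving $y=0$, so the image lies in $\Sigma_+$ with closed point in $X^{\CC\sta}$. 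Condition (c) is obtained by interchanging $+$ and $-$.

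\emph{Main obstacle.} The delicate step is to justify the local model rigorously in the derived category setting, i.e.\ to identify the formal $\CC\sta$-equivariant structure of $X$ at $p$ with the $\Ext^1$ picture above, including the identification of $\Sigma_\pm^\circ$. I would handle this by constructing the interpolating family $\cE''$ over $S\times\spec\tilde R$ directly: given $\cE_+$ whose closed fiber is the nonsplit extension $E_1\to E_0^+\to E_2$ classified by $\alpha^+\in\Ext^1(E_2,E_1)$, one twists the extension class by $\tilde\zeta^b$ in the spirit of the $\alpha\cdot e$ construction from the proof of Proposition \ref{p.2.10}, producing a family whose closed fiber splits as $E_1\oplus E_2$ and whose trivialization of $L$ on the generic fiber differs from $\tilde\sigma_+$ by an invertible multiple of $\tilde\zeta^b$, matching the local model prediction.
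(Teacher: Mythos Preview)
Your valuation computation is correct once the local model is in place, but the proof has a genuine gap: you have not justified that $f_+$, $f_-$, and all the intermediate $\tilde h\cdot\tilde f_+$ can be described simultaneously in a single \'etale or formal chart around one fixed point. The hypothesis of (a) only gives $f_+(\xi_0)\in\Sigma_+^\circ$ and $f_-(\xi_0)\in\Sigma_-^\circ$; the corresponding limit points $z_0=\lim_{t\to 0}t\cdot f_+(\xi_0)$ and $z_0'=\lim_{t\to 0}t^{-1}\cdot f_-(\xi_0)$ in $X^{\CC\sta}$ could a priori be distinct, so your $x,y$ coordinates for $f_+$ and for $f_-$ live in different charts and the equation $m=an_-$ has no meaning. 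Showing that the whole picture does sit over one chart is the actual content of the lemma, not a preliminary reduction.

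The paper handles exactly this point by a different mechanism. It lifts to an \'etale chart $U\to X$ around $z_0$, forms the surface $\spec R\times\PP^1$, extends the orbit map to a rational map into a compactification $\overline U$, and resolves its indeterminacy by successively blowing up $\CC\sta$-fixed points. The fiber over $\xi_0$ becomes a chain $B_1,\ldots,B_l$ of $\CC\sta$-invariant rational curves connecting lifts of $f_+(\xi_0)$ and $f_-(\xi_0)$. The crux is the proof that $l=m$ (no extra nonconstant components beyond the first on which $F_k$ is nonconstant): this uses separatedness of $M_-$ together with the specific moduli structure of $X$, namely that an isomorphism of the underlying families $E_m\cong E_-$ forces the induced map on $L(\cdot)$ to be an isomorphism over $\spec\tilde R$, contradicting the valuation $r<0$ computed from the group element $\tilde e$. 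Your proposed fix in the ``main obstacle'' paragraph does not work as stated: since $f_+(\xi)\in X_0$, the generic fiber $\cE_+|_\xi$ is $\tau_0$-stable and admits no subobject in $M_1$, so $\cE_+$ has no global filtration by an $E_1$-like subobject over $\spec R$, and one cannot ``twist the extension class by $\tilde\zeta^b$'' over all of $\spec\tilde R$ in the way the $\alpha\cdot e$ construction of Proposition~\ref{p.2.10} does for a constant family.
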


\begin{proof}
We let $f_\pm$ and $g$ be as in the statement of Simpleness Condition \ref{simpcond}.
We let $z_0=\lim_{t\to 0} t\cdot f_+(\xi_0)$, $z_0\in
X^{{{\csta}}}$ by our construction of $\Sigma_+^\circ$. We pick an
affine \'etale $\rho: U\to X$ that covers
$z_0$. We let $\overline U$ be a compactification of $U$.

We next let $p_1$ and $p_2$ be the first and the second
projections of $\spec R\times{{\csta}}$. We form the morphism
$$F'=p_2\cdot (f_+\circ p_1):\spec R\times \csta\to X,
$$ 
where $\cdot$ denotes the group
action. Since the closure of the image of $F'$ in $X$ contains
$z_0$, there is an open $V'\sub\spec R\times{{\csta}}$ such that
$(\xi_0,c)\in V'$ for general closed $c\in \csta$ and 
$F'(V')$ is contained in the image $\rho(U)\sub X$. 

Thus by replacing $R$ by
a finite extension of $R$, which we still denote by $R$,
we can assume that there is an open $V\sub \spec R\times{{\csta}}$ so that $(\xi_0,c)\in V$ 
for general closed $c\in \csta$; $F'$ lifts to 
an $F: V\to U$ so that we have a commutative diagram
\beq\label{diag-S}
\begin{CD}
V  @>{ F}>> U\\
@VVV@VV{\rho}V\\
\spec R\times \csta @>{F'}>> X,
\end{CD}
\eeq
and $\lim_{t\to 0}  F(( \xi_0, t))=\bar{z_0}\in U$ with $\rho(\bar {z_0})={z_0}$.

We remark that with the new $R$, the induced morphisms $f_\pm:\spec R\to X$ satisfy the
same property, except that the integer $a$ is replaced by its multiple by the ramification index of
the finite extension. Thus the simpleness condition is invariant under finite extensions
of $R$.

We then let $S=\spec R\times\Po $, which contains $\spec 
R\times{{\csta}}$ via ${{\csta}}=\Ao-\{0\}\sub \Ao\sub\Po $. Since $S$ is a
smooth surface, there is a minimal finite set $A_0\sub S$ so that
$ F$ extends to
$$F_0: S\setminus A_0\lra \overline U.
$$
(We call $A_0$ the indeterminacy locus of $F_0$.) We let ${{\csta}}$
act on $S$ by acting trivially on $\spec R$ and tautologically
on ${{\csta}}$ (with weight 1).

We now partially resolve the indeterminacy of $F_0$. We let
$A_0'=A_0\cap S^{{{\csta}}}$. We blow up $S$ along $A_0'$, obtaining
$S_1$, which has an induced ${{\csta}}$-action. We let $A_1\sub S_1$
be the indeterminacy locus of the extension $F_1: S_1\setminus
A_1\to \overline U$ of $F_0$. We then let $A_1'=A_1\cap S_1^{{{\csta}}}$,
blow up $S_1$ along $A_1'$ to obtain $S_2$, etc. We continue this
process until we get $S_k\to S_{k-1}$ so that the indeterminacy
$A_k$ of the extension
$$F_k: S_k\setminus A_k\to \overline U
$$ 
has no point fixed by ${{\csta}}$.

We let $c\in \csta$ be a general closed point and consider the morphism
$\varphi_1: \spec R\to S$ defined via $\varphi_1(\xi)=(\xi,c)$. We let
$\eta_1\defeq \varphi_1(\xi_0)=(\xi_0,c)\in S$.
Since $(\xi_0,c)\in V$ by our choice of $V$, we can assume
$\eta_1\in S\setminus A_0$. Also since $f_+(\xi_0)\in \Sigma_+^\circ$, which is not
fixed by $\csta$, $\eta_1\in S$
is not fixed by $\csta$. Since $S_k$ is the successive blow-up of $A_i^\csta$, 
we can view $\eta_1$ as an element in $S_k$. Thus  $\eta_1\in S_k-S_k^\csta\cup A_k$.

We next let $\iota: \spec K\to \spec  R$ be the inclusion, and form
$(\iota, g): \spec K\to \spec R\times{{\csta}}$, where $g$ is given in the statement of the Lemma.
We let $\varphi_-:\spec R\to S_k$ be the extension of $(\iota,g)$. Since
$\ti p_1: S_k\to\spec R$ induced by the first projection $p_1:S\to\spec R$ is proper,
$\varphi_-$ exists. We let $\eta_-=\varphi_-(\xi_0)\in S_k$.

We now find a chain of $\csta$-invariant rational curves that connects $\eta_1$ and $\eta_-$ in
$S_k$.
Let $\pi: S_k\to S$ be the blow-up morphism. Let
$D=\pi\upmo(\xi_0\times\Po)$; $D$ is connected and is a union of
${{\csta}}$-invariant rational curves. %Also, because $S$ is smooth and each blow-up is performed along
%$\csta$-fixed points, the $\csta$-action on each $B_i$ is non-trivial.
As we have argued, $\eta_1$ does not lie in the
exceptional divisor of $\pi$, thus $\eta_1$ lies
in a unique $B_1\cong \Po\sub D$. Since $D$ is connected and since
$\eta_-\in D$, we can find a chain of rational curves $B_1,\cdots,
B_l$ so that $q_i=B_i\cap B_{i+1}\ne\emptyset$ and $\eta_-\in B_l$.

It is easy to describe the group action on $B_i$. Let $\eta_i$, $i\ge 2$, be a closed point
in $B_i-B_i^\csta$. Since $\eta_1$ is not fixed by $\csta$,
we can find $\eps\in\{1,-1\}$ so that $\lim_{t\to 0} t^\eps\cdot\eta_1=q_1$.
Since the $\csta$-action on $\spec R\times \Po$ is via weight $0$ and $1$ on the two factors,
since $S_k$ is derived by successively blowing up a collection of $\csta$-fixed points, and since
$B_i$ is a chain of $\csta$-invariant rational curves, we must have $\lim_{t\to 0} t^\eps\cdot \eta_i=q_i$ for
$i\le l$. (We let $q_l$ be such that $B_l^\csta=\{q_{l-1},q_l\}$.) 
Because $g(\xi)\cdot f_+(\xi)=f_-(\xi)$ with $g\sta(t)=\alpha\cdot \zeta^a$ and
$a>0$, we have $\eps>0$. This proves $\eps=1$.

We claim that $F_k|_{B_1}\ne \text{const}$. Because $f_+( \xi_0)\in\Sigma_-^\circ$, and
because $\rho(F_k(\eta_1))=f_+(\xi_0)$, $\rho(F_k(\eta_1))\in\Sigma_+^\circ$.
Since $F_k|_{B_1}$ is $\csta$-equivariant, $F_k|_{B_1}\ne\text{const}$. This proves the claim.

We let $\ti V=F_k\upmo(U)\sub S_k$. By \eqref{diag-S}, $q_1\in V\sub \ti V$. 
In general, if for some $2\le i\le l$ we know
$q_{i-1}\in {\ti V}$, then $B_{i}\cap {\ti V}\ne \emptyset$; then we can talk about whether $F_k|_{B_{i}}$ is constant;
if in addition $F_k|_{B_{i}}$ is constant, then $q_{i}\in {\ti V}$. Thus by an induction argument,
we see that there is an $2\le m\le l+1$ so that (1) $F_k|_{B_i}$ is constant for all $2\le i< m$, 
and $F_k|_{B_{m}}$ is not constant in case $m\ne l+1$; 
and (2) $B_i\sub \ti V$ for $2\le i<m$ and $B_m\cap\ti V\ne \emptyset$.

We now prove item (a) of the simpleness condition. Let $m$ be the integer specified. We claim that in case (a), $m\le l$.
Suppose not, then $F_k(B_i)=F_k(q_1)\sub U$ for all $2\le i\le l$.
Since $\rho$ is \'etale, this implies $\rho(F_k(B_i))\sub X^\csta$ for the same $i$; hence $\rho(F_k(\eta_-))\in X^\csta$. 
But this contradicts $\rho(F_k(\eta_-))=f_-(\xi_0)$ and the assumption $f_-(\xi_0)\in \Sigma_-^\circ$.
Therefore, $m\le l$ and consequently, $F_k|_{B_m}$ is not constant.
%Since $q_{m-1}=B_{m-1}\cap B_m$ and $F_k|_{B_i}=\text{const}.$ for $2\le i<m$,
%$F_k(q_{m-1})=F_k(q_1)$. Since $U$ is open, and $F_k(q_1)=F_k(q_{m-1})\in U$,
Since $B_m\cap \ti V\ne \emptyset$, we may let $\eta_m\in B_m\cap\ti V$ be a general closed point.

We claim that $\rho(F_k(\eta_m))\in\Sigma_-^\circ$. 
By the definition of $\Sigma_-\ucirc$, $\rho(F_k(\eta_m))\in\Sigma_-^\circ$ if
$\rho(F_k(\eta_m))\notin X^\csta$ and $\lim_{t\to 0} t\upmo\cdot 
\rho(F_k(\eta_m))\in X^\csta$. 
Suppose $\rho(F_k(\eta_m))\in X^\csta$. As argued before, $\rho\circ F_k$ is $\csta$-equivariant 
(where it is defined), and $\rho$ \'etale. Thus $\rho(F_k(\eta_m))\in X^\csta$ implies that $F_k|_{B_m}$ is constant, a contradiction. 
For the remaining condition, we notice that
$$\lim_{t\to 0} t\upmo\cdot \rho(F_k(\eta_m))=\lim_{t\to 0} \rho(F_k(t\upmo\cdot \eta_m))=\rho(F_k(q_{m-1}))
={z_0}\in X^\csta.
$$
This proves that $\rho(F_k(\eta_m))\in\Sigma_-\ucirc$.

Then since both $[f_-(\xi_0)]$ and
$[\rho(F_k(\eta_{m} ))]\in [X_-/{{\csta}}]$ are specializations of
$[f_-(\xi)]=[f_+(\xi)] \in [X_-/{{\csta}}]$, by the
assumption that $[X_-/{{\csta}}]$ is separated, we conclude that
\beq\label{ff}
[f_-(\xi_0)]=[\rho(F_k(\eta_{m} ))]\in [X_-/\csta].
\eeq
%Because $[M_-/\csta]$ is a DM stack, there is a $c'\in \csta$
%so that $f_-(\ti\xi_0)=c'\cdot \rho(F_k(\eta_m))$.

We prove $l=m$. For this, we use the properties of the construction of $X$. Suppose $l>m$.
Since $S_k$ is the result of successive blow-ups
of $\csta$-fixed points of $S=\spec R\times\Po$,
for the $\eta_m\in B_m$, there are a finite extension $\ti R\supset R$ and a morphism 
$\ti e: \spec\ti R\to \csta$ so that
$\ti e\cdot \ti\varphi_-$ extends to $\ti\varphi_m:\spec \ti R\to S_k$ so that $\ti\varphi_m(\ti\xi_0)=\eta_m$.
By the property of the $\csta$-actions on the chain $B_i$, we have $\ti e\sta(t)=\ti\beta\cdot\ti\zeta^r$ with $r<0$ and
$\ti\beta\in\ti R$ invertible. 

Since $\eta_m\in {\ti V}$, $\rho\circ F_k\circ \ti\varphi_m: \spec \ti R\to X$ is defined. Let 
$$\ti f_m\defeq \rho\circ F_k\circ\ti \varphi_m :\spec \ti R\lra X.
$$
(Let $\ti f_-$ be the composite of $\spec \ti R\to\spec R$ with $f_-$; cf. comments after 
Definition \ref{def-master1}.)
Then $\ti e\cdot \ti f_-=\ti f_m$.
 %let $\ti f_-$ be the composite of $\spec\ti R\to\spec R$ with $f_-$.

Let $(E_m,\sigma_m)$ %:L(E_m)\to \cO_{\spec\ti R})$ 
and  $(E_-,\sigma_-)$ be families on $S\times\spec\ti R$ that are
the pull-backs of the universal family of $X$ via 
$\ti f_m$ and $\ti f_-$, respectively. 
Let $\iota: S\times \ti \xi_0\to S\times \spec \ti R$ be the closed embedding.
% and $\iota_0:S\times \ti\xi_0\to S\times \spec\ti R$ be the closed embedding.
Because $\ti e\cdot \ti f_-=\ti f_m$, the families $E_m$ and $E_-$ restricted to $S\times \ti \xi$ are isomorphic;
let $\alpha: E_-|_{S\times \ti\xi}\to E_m|_{S\times\ti\xi}$ be such an isomorphism.
% so that $\iota\sta\alpha$ is an isomorphism and $\iota_0\sta\alpha$ is non-trivial. 
By the property of the line bundle $L(\cdot)$, $\alpha$ induces an isomomorphism 
$L(\alpha):  L(E_-)|_{\ti \xi}\to L(E_m)|_{\ti \xi}$.

Recall that $\sigma_m: L(E_m)\to \sO_{\spec \ti R}$ and $\sigma_-: L(E_-)\to \sO_{\spec \ti R}$
are isomorphisms. By the construction of the $\csta$-action on $X$, $\ti e\cdot \ti f_-=\ti f_m$ implies
that $\ti e\cdot \sigma_-|_{\ti\xi}=\sigma_m|_{\ti\xi}$.
Therefore,
\beq\label{ee}
\sigma_m|_{\ti \xi}\circ L(\alpha)\circ  (\sigma_-|_{\ti \xi})^{-1}=\ti e: \cO_{\spec\ti K}\lra \cO_{\spec\ti K},
\eeq
which is $\ti\beta\cdot\ti\zeta^r$ with $r<0$.
On the other hand, by scaling $\alpha$ by a power of $\ti\zeta$, say $\ti\zeta^d$, we can assume that
$\ti\zeta^d\cdot \alpha$ extends to an
$\ti\alpha: E_-\to E_m$
so that $\iota\sta \ti\alpha: \iota\sta E_-\to\iota\sta E_m$ is non-trivial.
By \eqref{ff}, 
$\iota\sta E_m\cong\iota\sta E_-$ and they are $\tau_-$-stable. 
Thus $\iota\sta \ti\alpha$ is an isomorphism.
Therefore, $\ti\alpha$ is an isomorphism and 
$$\sigma_m\circ L(\ti\alpha)\circ  \sigma_-^{-1}: \cO_{\spec\ti R}\lra \cO_{\spec\ti R}
$$
is an isomorphism.

Finally, since $L(\alpha)$ is independent of scaling, 
$$\sigma_m|_{\ti \xi}\circ L(\alpha)\circ  (\sigma_-|_{\ti \xi})^{-1}=
\sigma_m|_{\ti \xi}\circ L(t^d\cdot \alpha)\circ  (\sigma_-|_{\ti \xi})^{-1}=
\sigma_m|_{\ti \xi}\circ L(\ti\alpha)\circ  (\sigma_-|_{\ti \xi})^{-1}.
$$
Since the right hand side extends to an invertible element in $\ti R$,
it contradicts \eqref{ee} and $r<0$. This proves $l=m$.

We now finish the proof of (a). Let $\ti h:\spec\ti K\to \csta$ be given by
$\ti h\sta(t)=\beta\cdot t^b$ with $0<b<\imath(\ti R)\cdot a$. We let $\ti\varphi_b: \spec \ti R\to S_k$ be
the extension of $\ti h\cdot \ti \varphi_1: \spec K\to
S_k$. Since $\ti g\cdot\ti \varphi_1$ extends to $\ti \varphi_-$ with $\ti\varphi_-(\ti\xi_0)\in B_l$,
$0<b<\imath(\ti R)\cdot a$ guarantees that  $\ti\varphi_b$ exists and $\ti \varphi_b(\ti\eta_0)\in B_2\cup\cdots\cup B_{m-1}$. 
Since for $2\le i\le m-1$, $B_i\sub \ti V$ and $F_k(B_i)=F_k(q_1)$, $\rho\circ F_k\circ\ti \varphi_b$ is defined
and $\rho\circ F_k\circ\ti\varphi_b(B_i)=z_0\in X^\csta$. Since $\ti h\cdot \ti f_+=\rho\circ F_k\circ(\ti h\cdot \ti\varphi_1)
:\spec\ti K\to X$, $\ti h\cdot \ti f_+$
extends to $(\ti h\cdot \ti f_+)^\sim\defeq \rho\circ F_k\circ\ti\varphi_b$ such that $(\ti h\cdot \ti f_+)^\sim(
\ti\xi_0)={z_0}\in X^{\csta}$. This proves (a).

The proof of (b) is similar and we omit it.
\end{proof}

%\begin{lemm}\label{l2.7}
%The pair $M_\pm=[X_{\pm}/\CC^*]\sub [X/\CC^*]$ is a centered simple
%flip.
%\end{lemm}
%
%\begin{proof}
%\red [Rewrite].
%\end{proof}

\subsection{Symmetric obstruction theory}
In this subsection, we discuss symmetric obstruction theory for
simple flips. Let $X$ be a \DM stack acted on by $\CC^*$. Let
$\cM=[X/\CC^*]$ be the quotient stack. %Throughout this paper we
%assume that the $\CC^*$-localization requirement (Remark
%\ref{remloc}) is satisfied.

\begin{defi}\label{def-perobth}(\cite{GrPand}) We say the quotient stack
$\cM=[X/\CC^*]$ has a \emph{perfect obstruction theory} (resp. \emph{symmetric obstruction theory}) if $X$ has
a $\CC^*$-equivariant perfect (resp. symmetric) obstruction theory.
\end{defi}

We recall that as part of the definition, there is a
$\CC\sta$-equivariant derived category object $F\bul\in
D^b_{\CC^*}(\mathrm{Coh}X)$, which \'etale locally is
quasi-isomorphic to a two-term complex of locally free sheaves,
and an arrow $F\bul \to \LL_X\gmo$ satisfying the requirement of
perfect obstruction theory or symmetric obstruction theory. (Here
$\LL_X\gmo$ is the truncation of the cotangent complex of $X$.)

%When considering moduli spaces of objects in $D^b(\mathrm{Coh} S)$
%for a Calabi-Yau 3-fold $S$, the perfect obstruction theory
%becomes symmetric and so we make the following definition.

%\begin{defi}\label{def-symobth}
%We say $\cM=[X/\CC^*]$ has a \emph{symmetric obstruction theory}
%if $X$ has a $\CC^*$-equivariant symmetric obstruction theory.
%\end{defi}

\begin{rema}\label{rem-CS}
For moduli of stable \emph{sheaves}, analytic locally we may find a function $f$
such that locally the moduli space and its obstruction theory are given by the vanishing of
the differential $df$ (cf. \cite{Joyce}). We call such functions $f$
local \emph{Chern-Simons functionals} of the moduli space. For
more general moduli problems like stable \emph{objects} in the
derived category, local Chern-Simons
functionals are not known to exist. See \cite[Appendix A]{MPT} for a discussion about the
existence of Chern-Simons functional.
\end{rema}

%Recall that a symmetric obstruction theory for a \DM stack $X$
%means that \'etale locally on a smooth $Y$ there is an almost
%closed 1-form $\omega$ such that $X$ is the vanishing locus of
%$\omega$ and the perfect obstruction theory is given by $d\omega$
%\cite[Corollary 3.14]{BehFan}. In particular, the virtual
%dimension is zero.

\def\ti{\tilde}
\def\boldw{{\mathbf w}}
\def\boldu{{\mathbf u}}
\def\boldv{{\mathbf v}}
\def\eps{\epsilon}
\def\veps{\varepsilon}
\def\blup{{\rm{bl}}}
\def\hatU{{\hat U}}
\def\hatW{{\hat W}}
\def\umv{^{\text{mv}}}
\def\bv{{\mathbf v}}

%%%%

\begin{lemm}\label{p2.8} Let $S$ be a  Calabi-Yau three-fold.
Let $M_\pm=[X_\pm/\CC^*]$ be a simple wall crossing in
$\cM=[X/\CC^*]$ of moduli of stable objects in $D^b(\mathrm{Coh}
S)$ (Definition \ref{assum1}) such that $Ext^{\le -1}(E,E)=0$ for
any $E\in \cM$. Suppose either $X$ $\csta$-equivariantly embeds in
a smooth DM-stack, or $X$ has a global tautological family arising
from $X\to \cM$. Then $X$ has a $\CC^*$-equivariant symmetric
obstruction theory.
\end{lemm}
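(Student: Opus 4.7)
The plan is to exhibit the required obstruction theory as the one coming from the truncated trace-free Atiyah class of a universal family of complexes on $S$, with symmetry supplied by Serre duality on the Calabi--Yau three-fold.

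First, I would produce a $\csta$-equivariant universal family $\cE$ on $X\times S$. If $X$ carries a global tautological family this is immediate, with the $\csta$-equivariant structure lifted from the scaling action on the pair datum $(E,\sigma)$ (and $\csta$ acting trivially on $S$). Under the alternative hypothesis that $X$ embeds $\csta$-equivariantly into a smooth DM-stack, the vanishing $\Ext^{\le -1}(E,E)=0$ together with Lieblich-type representability for complexes yields an \'etale cover $\{U_\alpha \to X\}$ carrying local tautological families $\cE_\alpha$; since the obstruction theory constructed below is canonical in the universal family, it will descend to $X$.

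Next, let $\pi:X\times S\to X$ be the projection. The trace-free part of the relative Atiyah class of $\cE$ yields a $\csta$-equivariant morphism
\[
\phi : E\bul \defeq \bbl R\pi_\ast\, R\mathcal{H}om(\cE,\cE)_0 \bbr\dual[-1] \lra \LL_X\gmo,
\]
where $R\mathcal{H}om(\cE,\cE)_0$ denotes the trace-free summand. The vanishing $\Ext^{\le -1}(E,E)=0$ combined with the trace-free cut-off---the latter accounting for the trivialization $\sigma$ pinning down the trace-scalar deformations---forces $E\bul$ to have amplitude in $[-1,0]$, so $\phi$ is a $\csta$-equivariant perfect obstruction theory; the equivariance is automatic since $\csta$ acts trivially on $S$ and the Atiyah class is functorial. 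For the symmetry, relative Serre duality along $\pi$, combined with the Calabi--Yau condition $K_S\cong\cO_S$ and the self-duality $R\mathcal{H}om(\cE,\cE)\dual\cong R\mathcal{H}om(\cE,\cE)$, produces $R\pi_\ast(R\mathcal{H}om(\cE,\cE)_0)[3]\cong R\pi_\ast(R\mathcal{H}om(\cE,\cE)_0)\dual$, i.e.\ $E\bul\cong (E\bul)\dual[1]$, which is the desired symmetric self-duality.

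The main obstacle I anticipate is aligning the trace-free cut-off with the scaling of $\sigma$: one must verify that the infinitesimal $\csta$-action on $X$ corresponds precisely to the trace-scalar direction in $R\mathcal{H}om(\cE,\cE)$, so that removing the trace accounts for exactly the extra $\sigma$-parameter and nothing more. A secondary technical point is, in the alternative case without a global tautological family, checking that the locally constructed obstruction theories glue $\csta$-equivariantly; here the $\csta$-equivariant embedding of $X$ into a smooth DM-stack is used to guarantee that $\LL_X\gmo$ admits a global two-term locally-free presentation against which the Atiyah-class map can be realized.
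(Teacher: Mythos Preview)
Your proposal has a genuine gap at the strictly $\tau_0$-semistable locus, which is precisely the locus the lemma is designed to handle. The complex $R\pi_\ast R\mathcal{H}om(\cE,\cE)_0$ does \emph{not} have amplitude $[-1,0]$ on $X$: at a point $(E_1\oplus E_2,\sigma)\in X^{\csta}$ one has $\Hom(E,E)_0\cong\CC$ (the extra factor of $\csta$ in $\Aut(E)$), and by Serre duality $\Ext^3(E,E)_0\cong\CC$ as well. So your $E\bul$ has nontrivial $H^1$ and dually $H^{-2}$ along the fixed locus, and it is neither perfect of the required amplitude nor does it give an obstruction theory for $X$. The trace-free cut-off removes only the \emph{scalar} automorphism $\CC\cdot\id_E$; it does not know about the trivialization $\sigma$, and at the semistable points it leaves a one-dimensional excess in $\Hom$.

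The paper fixes this with a two-step modification that you are missing. First, the map $R=R\pi_\ast\cH om(\cE,\cE)_0[1]\to\cO_X[1]$ induced by the functor $E\mapsto L(E)$ is used to form a cone $R'$ fitting in $\cO_X\to R'\to R\to\cO_X[1]$; this inserts the $\sigma$-direction and makes $(\LL_X\bul)\dual\to R'$ an honest obstruction theory (the Atiyah-class map from $(\LL_X\bul)\dual$ factors through $R'$ because the composition $\cO_X\to(\LL_X\bul)\dual\to R$ coming from the $\csta$-action vanishes). Second, the Serre-dual arrow $\cO_X[-2]\to R'$ is used to form a further cone $R''$, killing the unwanted $H^2(R')\cong\Ext^3_\pi(\cE,\cE)_0$. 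Only then is the complex perfect in $[-1,0]$, and the symmetry $R''\cong(R'')\dual[-1]$ holds because the two modifications $R\rightsquigarrow R'$ and $R'\rightsquigarrow R''$ are exchanged by Serre duality. You correctly flagged ``aligning the trace-free cut-off with the scaling of $\sigma$'' as the main obstacle, but it is not a technicality to be checked at the end: it is the entire content of the construction.
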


\begin{proof}
%By Definition \ref{assum1}, the morphism $X\to \cM$ sends
%any point in $X$ to a $\tau_*$-stable object $E$ for $*=0,+,-$ or
%to a direct sum $E_1\oplus E_2$ with $E_i\in M_i$. In the former
%case, $X$ is \'etale locally a $\CC^*$-bundle over the moduli
%space of $\tau_*$-stable objects which has a symmetric obstruction
%theory $Ext^\bullet(E,E)_0$. Hence $X$ is naturally equipped with
%a symmetric obstruction theory over the $\tau_0$-stable part. In
%the latter case of $E=E_1\oplus E_2\in {M_1}\times {M_2}=X^T$,
%\'etale locally the cokernel $\mathbf{F}$ of $\CC=Ext^0(E,E)_0
%\lra Ext^1(E,E)_0\oplus\CC$ gives us the tangent to the
%deformation space of the pair $(E,\sigma)\in X$ and the kernel
%$\mathbf{F}^\vee$ of the dual $Ext^2(E,E)_0\oplus \CC\to
%Ext^3(E,E)_0=\CC$ defines an obstruction theory. Here the maps
%$\CC\to Ext^1(E,E)\oplus \CC$ are given by the action of the
%stabilizer $\CC^*$ on the deformation space of the pair
%$(E,\sigma)$. Certainly $\mathbf{F}\to \mathbf{F}^\vee$ defines a
%symmetric obstruction theory of $X$.
%
%\red [[[Does it look okay??]] \black
Suppose first that we have a tautological object $\cE\in
D^b(\mathrm{Coh} (X\times S))$ induced from $X\to \cM$. Let
$\pi:X\times S\to X$ be the projection. Let $\LL_X^\bullet$ denote
the cotangent complex of $X$ and $\LL_\cM^\bullet$ be defined by
the distinguished triangle
\[ \LL_\cM^\bullet\lra \LL_X^\bullet\lra \cO_X,
\]
where the last arrow is given by the action of $\CC^*$ on $X$.
Taking the duals, we have a distinguished triangle
\[ \cO_X\lra {\LL_X^\bullet}^\vee \lra {\LL_\cM^\bullet}^\vee\lra \cO_X[1]. \]

By the construction in \cite{HuyLeh} using the Atiyah class,
we have a morphism ${\LL_X^\bullet}^\vee \to
R:=R\pi_*\cH om(\cE,\cE)_0[1]$. Since the $\CC^*$ action on a pair
$(E,\sigma)\in X$ fixes $E$, the composition
$$\cO_X\lra {\LL_X^\bullet}^\vee\lra R$$
is trivial and hence we have an induced morphism
${\LL_\cM^\bullet}^\vee\to R$. On the other hand, the functorial
assignment $E\rightsquigarrow L(E)$ of Definition \ref{def-det}
induces a morphism
\[
R=R\pi_*\cH om(\cE,\cE)_0[1] \lra Hom(L(\cE),L(\cE))[1]=\cO_X[1] .
\]
Therefore we have a diagram
\[
\xymatrix{ \cO_X \ar@{=}[d]\ar[r] & {\LL_X^\bullet}^\vee \ar[r] &
{\LL_\cM^\bullet}^\vee \ar[r]\ar[d] & \cO_X[1]\ar@{=}[d]\\
\cO_X &&R\ar[r] &\cO_X[1]. }\] Let $R'$ be defined by the
distinguished triangle
\[
\cO_X \lra  R' \lra  R\lra \cO_X[1]
\]
so that we have a commutative diagram of distinguished triangles
\[
\xymatrix{ \cO_X \ar@{=}[d]\ar[r] & {\LL_X^\bullet}^\vee
\ar[d]\ar[r] &
{\LL_\cM^\bullet}^\vee \ar[r]\ar[d] & \cO_X[1]\ar@{=}[d]\\
\cO_X \ar[r] & R'\ar[r] &R\ar[r] &\cO_X[1]. }\] Note that
$H^{-1}(R)=Hom_\pi(\cE,\cE)_0$ injects into $\cO_X$ because the
stabilizer $\CC^*$ of an $E_1\oplus E_2\in M_1\times M_2$ acts
nontrivially on $L(E_1\oplus E_2)$. By a simple diagram chase with
the long exact sequence, we find that the second vertical morphism
${\LL_X^\bullet}^\vee\to R'$ is an obstruction theory of $X$ but
it is not perfect since $H^2(R')\cong H^2(R)=Ext^3_\pi(\cE,\cE)_0$
may be non-trivial.

To remove $H^2(R')$, we take the dual $\gamma^\vee:\cO_X[-1]\to
R^\vee$ of the bottom right arrow $\gamma :R\to \cO_X[1]$ of the
above diagram. By the Serre duality $R^\vee\cong R[1]$, we obtain
\[
\cO_X[-2]\mapright{\gamma^\vee[-1]} R^\vee[-1]\cong
R\mapright{\nu} \cO_X[1].
\]
The composition must vanish by a direct check with Serre pairing. From the
distinguished triangle $R'\to R\to \cO_X[1]$, we therefore obtain
a morphism $\cO_X[-2]\to R'$. Let $R''$ be defined by the
distinguished triangle
\[
\cO_X[-2]\lra R'\lra R'' \lra \cO_X[-1].
\]
Since $H^{-1}(R)=Hom_\pi(\cE,\cE)_0$ injects into $\cO_X$, $\cO_X$
surjects onto $H^2(R')\cong H^2(R)$. From the long exact sequence,
we find that the composition
\[
{\LL_X^\bullet}^\vee\lra R' \lra R''
\]
is a perfect obstruction theory for $X$. By choosing \'etale
locally a free resolution of $R''$ and truncating as in the proof
of \cite[Lemma 2.10]{PanTho}, we can find a two-term complex of
locally free sheaves representing $R''$. It is straightforward to
see that the Serre duality $R\cong R^\vee[-1]$ induces an
isomorphism $R''\cong {R''}^\vee[-1]$ and thus the perfect
obstruction theory ${\LL_X^\bullet}^\vee \to R''$ is symmetric.
This proves the Lemma in case a global tautological family exists.

We next consider the case where $X$ embeds $\csta$-equivariantly into a smooth
DM-stack, say $Y$. Let $I$ be the ideal sheaf of $X\sub Y$.
Then $\LL\gmo_X=[I/I^2\to \Omega_Y|_X]$. Let $\coprod\lalp Y\lalp\to Y$ be an
affine \'etale atlas, let $U\lalp=Y\lalp\times_Y X \to X$, so that $U\lalp$ has tautological family
$\cE\lalp$. The homomorphism $\Omega_X\to\cO_X$
induced by the group action on $X$ lifts to
$\Omega_{U\lalp}\to\cO_{U\lalp}$. Mimicking the prior argument, we
see that $U\lalp$ has a symmetric obstruction theory. By
\cite{Beh-symm}, the symmetric obstruction theory is given by an
almost closed 1-form $\omega\lalp\in\Gamma(\Omega_{Y\lalp})$. In
particular, $U\lalp=(\omega\lalp=0)$ and the obstruction is given
by
$$[T_{Y\lalp}|_{U\lalp}\mapright{d\omega\lalp}\Omega_{Y\lalp}|_{U\lalp}]\lra \LL\gmo_{U\lalp}.
$$

We remark that if $\omega\lalp'$ is another choice of almost closed $1$-form
giving the symmetric obstruction theory, then necessarily $\omega\lalp-\omega\lalp'\in I\lalp^2$,
where $I\lalp$ is the ideal sheaf of $U\lalp\sub Y\lalp$. Thus $d\omega\lalp|_{U\lalp}=d\omega\lalp'|_{U\lalp}$.
Consequently, all $d\omega\lalp$ patch to form a single complex
$[T_{Y}|_X\to \Omega_Y|_X]$, and the arrows above patch to form a single arrow
$$[T_{Y}|_X\mapright{}\Omega_{Y}|_X]\lra \LL\gmo_{X}.
$$
This provides a symmetric obstruction theory of $X$.
\end{proof}

%%%%%%%%%%%%%%%%%%%%%%%%%%%%%%%%%%%%%%%%%%%%%%%%%%%%
%%%%%

\section{$\csta$-Intrinsic blow-up}
\def\cblow{\mathrm{bl}^{\CC\sta\!}}

Suppose $X$ has a symmetric obstruction theory, the de facto
virtual dimension of $[X/\CC^*]$ is $-1$. This counters our
intuition that both $M_\pm\sub [X/\CC^*]$ should have virtual
dimension $0$.

As an example, suppose $X\sub Y$ is a $\CC^*$-equivariant
embedding in a smooth scheme and $X=(\omega=0)$, where
$\omega\in\Gamma(\Omega_Y)$ is $\CC^*$-invariant. Then the
obstruction theory of $X$ is given by the complex
$$\cO_X(T_Y)\mapright{d\omega}\Omega_Y|_X.
$$
Suppose $\CC^*$ acts on $X$ without fixed points. Then it induces a homomorphism
\beq\label{sigma-3}\eta:
\cO_X\mapright{} \cO_X(T_Y), \eeq whose cokernel is the pull-back
of the tangent sheaf of $X/\CC^*$.
%By \cite{Kiem-Li}, the
%($\CC^*$-invariant) normal cone $C_X\sub T_Y\dual|_X$ lies in the
%kernel of the dual $\sigma\dual: T_Y\dual|_X\to\cO_X$. This
In principle, we expect that the obstruction complex of $X/\CC^*$ should be the
descent to $X/\CC^*$ of
$$\coker\{ \eta\} \mapright{d\omega} \ker\{ \eta\dual\}.
$$
In this way, the obstruction theory of $X/\CC^*$ remains
symmetric.

This argument breaks down near $X^{\CC\sta}$, where $\sigma$ in \eqref{sigma-3} is not 
a subline bundle.
To salvage this argument, we blow up $Y$
along $Y^{\CC^*}$ and work with a ``modified total
transform'' of $X$ --- the modification is to make the
resulting scheme independent of the embedding $X\sub Y$. We
will call such process the \emph{$\CC\sta$-intrinsic blow-up} of $X$.

%We will show that it has
%curvilinear-perfect obstruction theory of virtual dimension zero.

%In this section we define a blow-up $[\bar X/\CC^*]$ of a quotient
%$[X/\CC^*]$ of a \DM stack $X$ along the fixed point locus
%$X^{\CC^*}$. We show that this blow-up is independent of the
%choice of an equivariant embedding of $X$ into smooth $Y$ and call
%it an \emph{$\CC\sta$-intrinsic blow-up}. When $X$ is equipped with an
%equivariant symmetric obstruction theory, we find a perfect
%obstruction theory for $\bar X$ which is locally $[T_{\tilde
%Y}|_{\bar X}\mapright{d\tilde \omega} \bar F_2|_{\bar X}]^\vee$
%and a surjective homomorphism of $\bar F_2$ to a line bundle. We
%then show that the virtual normal cone lies entirely in the kernel
%of the surjective homomorphism. This will enable us to define a
%\emph{reduced virtual fundamental class} for the master space in
%the subsequent sections.

\subsection{$\csta$-Intrinsic blow-up}

We begin with the easiest
case --- the formal case.
\bigskip

\noindent {\bf The formal case}: {\sl Let $U$ be a formal
$\CC^*$-affine scheme such that its fixed locus $U^{\CC\sta}$ is
an affine scheme, and the set of closed points satisfy
$\text{Set}(U)={\text Set}(U^{\CC\sta})$. }

Since $U^{\CC\sta}$ is affine, we can embed it into a smooth
affine scheme: $U^{\CC\sta}\sub V_0$; since
$\text{Set}(U)=\text{Set}(U^{\CC\sta})$, we can further find a
smooth formal $\CC\sta$-scheme $V$ such that the $\CC^*$-fixed locus $V^{\CC\sta}=V_0$,
$\text{Set}(V)=\text{Set}(V_0)$ and the embedding $U^{\CC\sta}\sub
V_0$ extends to a $\CC\sta$-equivariant embedding $U\sub V$.

Let $\pi:\bar V\to V$ be the blow-up of $V$ along
 $V^{\CC^*}$; let $ \ti U=U\times_V
\bar V\sub \bar V$ be the total transform.
Let $I\sub \cO_V$ be the ideal sheaf defining
$U\sub V$, then $\ti
U$ is defined by the ideal sheaf
$$\ti I\defeq \pi\upmo(I)\cdot \cO_{\bar V}.
$$

%We cover $\ti V$ by $\CC\sta$-invariant affine opens $\ti V_i$ with projection $\pi_i: \ti V_i\to V$.
We let $E\sub \bar V$ be the exceptional divisor of $\pi$; let
$\xi\in\Gamma(\cO_{\bar V}(E))$ be the defining equation of $E$.
Since $U\sub V$ is $\CC^*$-invariant, $I$ is $\CC^*$-invariant;
thus it admits a weight decomposition $I=I^{\CC^*}\oplus I\umv$,
where $I^{\CC^*}$ is the $\CC^*$-invariant part and $I\umv$
consists of nontrivial weight parts of $I$. Clearly,
$\pi\upmo(I\umv)\sub \xi\cdot \cO_{\bar V}(-E)\sub \cO_{\bar V}$; thus $\xi\upmo
\pi\upmo(I\umv)\sub \cO_{\bar V}(-E)$.

We define $\bar I\sub\cO_{\ti V}$ to be the ideal such that
\beq\label{I-bar} \bar I=\text{the ideal generated by } \pi\upmo(I^{\CC^*})
\ \text{and}\
\xi\upmo \pi\upmo(I\umv)\cdot \cO_{\bar V}(E) . \eeq We
define
$$\cblow_V U \sub {\bar V}
$$
to be the subscheme (of $\bar V$) defined by the
ideal $\bar I$.

\begin{lemm} The scheme $\cblow_V U$ is independent of the choice of
$\CC^*$-equivariant embedding $U\hookrightarrow V$.
\end{lemm}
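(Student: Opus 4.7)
The plan is to reduce independence of the embedding to a canonical comparison through a product embedding. Given two $\csta$-equivariant embeddings $\iota_i: U \hookrightarrow V_i$ ($i = 1, 2$), I form the diagonal $\iota: U \hookrightarrow V_1 \times V_2$; this is still a $\csta$-equivariant embedding into a smooth formal $\csta$-scheme satisfying the standing hypotheses. It suffices to produce a canonical isomorphism $\cblow_{V_1 \times V_2} U \cong \cblow_{V_i} U$ for each $i$, since composing these two then yields $\cblow_{V_1} U \cong \cblow_{V_2} U$. For the comparison with $V_1$, I use the $\csta$-equivariant smooth projection $\pi_1: V_1 \times V_2 \to V_1$, whose restriction to $\iota(U)$ is the identity, together with the $\csta$-equivariant formal smoothness of $V_2$, to extend $\iota_2: U \to V_2$ to a $\csta$-equivariant morphism $V_1 \to V_2$. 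This produces a $\csta$-equivariant section $\sigma: V_1 \to V_1 \times V_2$ of $\pi_1$ passing through $\iota(U)$.

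Around $\sigma(V_1)$, the smoothness of $\pi_1$ supplies a $\csta$-equivariant splitting $V_1 \times V_2 \cong V_1 \times_{\CC} T$, where $T$ is a smooth formal pointed $\csta$-scheme with $\text{Set}(T) = \text{Set}(T^{\csta})$ and the embedding becomes $U \hookrightarrow V_1 \hookrightarrow V_1 \times_{\CC} T$, $v \mapsto (v, p)$, for some $p \in T^{\csta}$. This reduces the problem to the following key claim: whenever $V \hookrightarrow V \times_{\CC} T$ is the graph of such a point $p$, one has $\cblow_{V \times_{\CC} T} U \cong \cblow_V U$ canonically. Diagonalizing the $\csta$-action on $T$ at $p$ realizes $T$ as a product of one-dimensional smooth $\csta$-schemes, so induction reduces the key claim to the case $T = \A^1$ with $\csta$-action of a single weight $w \in \ZZ$.

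Set $V' = V \times \A^1$. For $w = 0$, the fixed locus is $V^{\csta} \times \A^1$, the blow-up splits as $\bar V' = \bar V \times \A^1$, the coordinate $y$ of $\A^1$ lies in the weight-zero part of the ideal of $U \hookrightarrow V'$, and a direct computation gives $\bar{I'} = \bar I + (y)$; the zero section $\bar V \hookrightarrow \bar V \times \A^1$ then identifies $\cblow_{V'} U$ with $\cblow_V U$. For $w \neq 0$, the fixed locus becomes $V^{\csta} \times \{0\}$ and the blow-up acquires an extra $y$-chart. Working in local $\csta$-equivariant coordinates $(x_i, z_j, y)$ with $x_i$ of weight $0$ and $z_j, y$ of nonzero weight, the key chart-by-chart observations are: in the $y$-chart, $\xi = y$ and $y$ lies in the moving part of $I'$, so $\xi^{-1}\pi^{-1}(y) = 1 \in \bar{I'}$ and $\cblow_{V'} U$ is empty there; in each $z_j$-chart, the transformed $y$-coordinate lies in $\bar{I'}$, while the pullbacks of $I^{\csta}$ and the $\xi^{-1}$-pullbacks of $I^{\text{mv}}$ reproduce $\bar I$. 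Hence $\cblow_{V'} U$ is entirely contained in the subscheme $\{y = 0\}$ of $\bar V'$ outside the $y$-chart, which is naturally isomorphic to $\bar V$; under this identification it coincides with $\cblow_V U$.

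The main technical obstacle is the reduction step, specifically constructing the $\csta$-equivariant section $\sigma$: one must extend a $\csta$-equivariant morphism from a closed $\csta$-equivariant subscheme to the ambient smooth formal $\csta$-scheme equivariantly. In the formal affine setting this is carried out order by order through the powers of the maximal ideal of $V_1^{\csta}$, with each step split into weight pieces using the reductivity of $\csta$. Once this section is in hand, the chart-by-chart verification of the key claim is routine but depends crucially on the coincidence $\xi = y$ in the $y$-chart, which is precisely what prevents the blow-up in the extra factor direction from contributing to $\cblow_{V'} U$ and thereby eliminates the apparent dependence on the ambient embedding.
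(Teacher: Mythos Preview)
Your approach is correct and takes a genuinely different route from the paper's. The paper first reduces to the case where $U$ is local, then compares every embedding $U\subset V$ to a single \emph{minimal} one $U\subset W=\spec B[\![x_1,\dots,x_m]\!]$ (with $B=\Gamma(\cO_U)^{\csta}$ and $m=\dim N/(N^2+\fp N)$) by choosing an equivariant lift $\varphi:W\hookrightarrow V$ and checking in explicit blow-up charts that the induced map $\cblow_V U\to\cblow_W U$ is an isomorphism, and then that this isomorphism is independent of $\varphi$. Your product-embedding argument is more structural: it reduces directly to adding a single transverse coordinate, which is precisely the computation the paper carries out for the extra variables $y_{m+1},\dots,y_n$, but packaged inductively rather than all at once. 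What your route buys is a cleaner conceptual reduction; what the paper's buys is an explicit verification that the comparison map does not depend on the chosen lift, a point you do not address (your isomorphism a priori depends on the section $\sigma$) and which the paper needs later when gluing over an \'etale atlas. One genuine imprecision in your write-up: the global splitting $V_1\times V_2\cong V_1\times_{\CC}T$ with $\sigma(V_1)=V_1\times\{p\}$ is not available as stated, since $\sigma(V_1)^{\csta}$ is a proper closed subscheme of $V_1^{\csta}\times V_2^{\csta}$ whenever $V_2^{\csta}$ is positive-dimensional, and your subsequent diagonalization of $T$ into one-dimensional factors is only valid after completing at a point; you should explicitly reduce to the local case first, as the paper does.
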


\begin{proof}
This is a local problem, thus we only need to prove the case where
$U$ is local. (i.e. $U$ contains only one closed point.) We let
$\Gamma(\sO_{V})=A\oplus M$ be the decomposition into the $\csta$
invariant and moving parts; namely  $A=\Gamma(\sO_{V})^{\CC\sta}$
and $M=\Gamma(\sO_{V})\umv$. We let $\Gamma(\sO_{U})=B\oplus N$
similarly with $B=\Gamma(\sO_U)^{\CC^*}$ and
$N=\Gamma(\sO_U)\umv$. Since $V$ is smooth and $A$ is local,
without loss of generality we can assume that
$$\Gamma(\sO_{{V}})=A [\![y_{\leq n}]\!]\defeq A [\![y_1,\cdots, y_n]\!],\quad
y_1,\cdots, y_n\in \Gamma(\sO_{{V}})\umv.
$$
Since $U\to{{V}}$ is a $\CC^*$-equivariant embedding, we can factor
the surjective $\Gamma(\sO_{{V}})\to \Gamma(\sO_{{U}})$
as composition
$$\Gamma(\sO_{{V}})=A[\![y_{\leq n}]\!] \lra B[\![y_{\leq n}]\!] \mapright{\psi_n} \Gamma(\sO_U)=B\oplus N.
$$
Here $\psi_n$ is induced by the restriction $y_i\mapsto \psi_n(y_i)\in N$.

We next pick a minimal set of generators. Let
\beq\label{m} m=\dim
N/(N^2, \fp N).
\eeq
We pick
$\alpha_1,\cdots,\alpha_m\in\Gamma(\sO_{{V}})\umv$ such
that the homomorphism
$$\psi_m: B[\![x_{\leq m}]\!]\defeq B[\![x_1,\cdots,x_m]\!]\lra B\oplus N,\quad x_i\mapsto \alpha_i,
$$
is a surjective ring homomorphism. We let $W=\spec B[\![x_{\le m}]\!]$.
Then $\psi_m$ defines an embedding $ U\sub  W$,
hence an $\CC\sta$-intrinsic
blow-up $\cblow_{ W}U$.

We pick a homomorphism $\varphi$ (as shown in the diagram)
\beq\label{diag-5}
\begin{CD}
B[\![y_{\leq n}]\!] @>{\psi_n}>> B+N\\
@AA{\varphi}A @|\\
B[\![x_{\leq m}]\!] @>{\psi_m}>> B+N
\end{CD}
\eeq By the the choice of $m$ \eqref{m}, $\varphi$ is injective.

We claim that $\varphi$ induces an isomorphism
\beq\label{iso-mn} \Phi(\varphi): \cblow_{ V}U\lra
\cblow_{ W}U, \eeq and that it is
independent of the choice of $\varphi$.

We first simplify the notation further. We let $a_{ij}\in B$ be so
that
$$\varphi(x_i)=\sum_{j=1}^n a_{ij} y_j \mod (y_1,\cdots,y_n)^2
$$
and that the $m\times m$ matrix $(a_{ij})_{1\le i,j\leq m}$ is
invertible. This is possible by the minimality of $m$ and possibly after
re-indexing the $y_j$, since $A$ is local. Thus by replacing $y_j$ by $\varphi(x_j)$ for
$j\leq m$, we can assume $\varphi(x_j)=y_j$. Under this
arrangement, we see that if we let
$$\cI_n\defeq \psi_n\upmo(0)\sub B[\![y_{\le n}]\!]\and
\cI_m\defeq \psi_m\upmo(0)\sub B[\![x_{\le m}]\!],
$$
then there are $h_i\in B[\![x_{\leq m}]\!]\umv$ such that
$$\cI_n=\bl \{\varphi(g)\}_{g\in \cI_m}\cup
\{ y_{m+i}-\varphi(h_i)\}_{1\leq i\leq n-m}\br\sub  B[\![
y_{\le n}]\!].
$$

It is easy to describe the $\CC\sta$-intrinsic blow-ups. First,
$\cblow_{\hat V}\hat U$ is covered by affines
$$R^n_i\defeq \spec B[v^i][\![\zeta^i]\!]/(v_i^i-1), \quad v^i=(v^i_1,\cdots,v^i_n)
$$
with transitions $R^n_i\to R^n_j$ given by the relation
$y_k=v^i_k\zeta^i$. The other
$\cblow_{\hat W}\hat U$ is covered by
affines $R^m_i=\spec B[u^i][\![\xi^i]\!]/(u_i^i-1)$ similarly
defined with $v^i$ replaced by $u^i=(u^i_1,\cdots,u^i_m)$. To
describe the defining equations of $R^m_i\cap
\cblow_{\hat W}\hatU$, we introduce the operation of
substitution $\blup_i$: for $g=g(y_1,\cdots,y_n)\in
B[\![y_{\le n}]\!]$ with $g(0)=0$, we define
$$\blup_i(g)
\defeq (\zeta^i)^{-1}
g(v^i_1\zeta^i,\cdots,v^i_n\zeta^i)|_{v^i_i=1}\in B[v^i][\![\zeta^i]\!]/(v_i^i-1).
$$
The subscheme $R^m_i\cap \cblow_{\hat W}\hatU$ is then
defined by the ideal
$$\blup_i(\cI_m)=\bl\{ \blup_i(g): g\in\cI_m\}\br.
$$
This shows that
$$R^m_i\cap \cblow_{\hat W}\hatU=\spec B[u^i][\![\xi^i]\!]/(u_i^i-1,\blup_i(\cI_m));
$$
similarly for $R^n_i\cap  \cblow_{\hat V}\hatU$.

The homomorphism $\varphi(x_i)=y_i$ defines a homomorphism
\beq\label{Phi-1} \Phi_i(\varphi):
B[u^i][\![\xi^i]\!]/(u_i^i-1,\blup_i(\cI_m))\lra
B[v^i][\![\zeta^i]\!]/(v_i^i-1, \blup_i(\cI_m)) \eeq via
$\xi^i\mapsto \zeta^i$ and $u^i_j\mapsto v^i_j$. Since
$y_{m+j}-\varphi(h_j)\in\cI_n$,
$v^i_{m+j}-\blup_i(\varphi(h_j))\in\blup_i(\cI_n)$. This proves
that $\Phi_i(\varphi)$ is surjective. The same argument shows that
$\Phi_i(\varphi)$ is injective. Thus it is an isomorphism.

We next show that \beq\label{inc-2} \cblow_{ V}U \sub
R^n_1\cup\cdots\cup R^n_m. \eeq Suppose that there is a closed
$b\in \cblow_{ V}U\cap\bl R^n_{m+j}-R^n_1\cup\cdots\cup R^n_m\br$.
Let $[b_1,\cdots,b_n]$ be the homogeneous coordinates of $b$. Then
$b\not\in R^n_1\cup\cdots\cup R^n_m$ implies that $b_i=0$ for
$i\leq m$. By the relation
$v^{m+i}_{m+j}-\blup_{m+i}(\varphi(h_j))$ for $i\geq 1$, we get
$b_{m+i}=0$ for all $i\geq 1$. This is a contradiction. This
proves the inclusion \eqref{inc-2}.

It is obvious that on restricting to $R^n_i\cap R^n_j$,
$\Phi_i(\varphi)=\Phi_j(\varphi)$. Thus the collection
$\{\Phi_i(\varphi)\}_{i\leq m}$ defines an isomorphism
\beq\label{Phi-3} \ti\Phi(\varphi): \cblow_{ V}U \lra \cblow_{
W}U. \eeq

Finally, we show that the isomorphism $\ti\Phi(\varphi)$ is
independent of the choice of $\varphi$. Notice that this will show
that the construction of $\cblow_{ V} U$ is
independent of the choice of the embedding $\psi_n$. Suppose
$\varphi'$ (in place of $\varphi$ in \eqref{diag-5}) is another
homomorphism making the square commutative. We define
$\Phi(\varphi')$, similar to that of $\Phi_i(\varphi)$, by sending
$x_j=u^i_j\xi^i$ to $\varphi'(x_j)|_{y_k\mapsto v^i_k\zeta^i}$.
Here the subscript $y_k\mapsto v^i_k\zeta^i$ means to substitute
the $y_k$ variable in the power series $\varphi'(x_j)$ by
$v^i_k\zeta^i$. It is direct to check that this defines a
homomorphism $\Phi_i(\varphi')$, like \eqref{Phi-1}.

We claim that for $i\leq m$, $\Phi_i(\varphi)=\Phi_i(\varphi')$.
Indeed, by the commutativity of \eqref{diag-5}, there are $g_i\in
\cI_n$ such that $\varphi(x_i)-\varphi'(x_i)=g_i$. Because
$$\varphi'(x_i)|_{y_k\mapsto v^i_k\zeta^i}
=(\varphi(x_i)+g_i)|_{y_k\mapsto v^i_k\zeta^i} \equiv \zeta^i\mod
\zeta^i\blup_i(\cI_n),
$$
we obtain $\Phi_i(\varphi')(\xi^i)=\Phi_i(\varphi)(\xi^i)$. For the
same reason, we have
$\Phi_i(\varphi')(u^i_j)=\Phi_i(\varphi)(u_j^i)$. This proves that
$\Phi_i(\varphi)=\Phi_i(\varphi')$, which implies that the
isomorphism \eqref{Phi-3} is independent of the choice of $\varphi$.

This proves that the $\cblow_{V}U$ is independent of
the embedding $U\sub V$.
\end{proof}

Because $\cblow_VU$ is independent of the embedding $U\sub V$, in the
following we will drop $V$ from the notation and denote by $\cblow U$ the
$\CC\sta$-intrinsic blow up of $U$.

\bigskip

\noindent
{\bf The affine case}: {\sl Let $Y$ be an affine scheme, let $Y_0\sub Y$ be
a closed subscheme and let $\hat Y$ be the formal completion of $Y$ along $Y_0$.
We assume $\hat Y$ is a $\CC\sta$-scheme so that $(\hat Y)^{\CC\sta}=Y_0$ as schemes.
}

The pair $Y_0\sub\hat Y$ is the pair studied in the formal case.
Thus we can form a $\CC\sta$-intrinsic blow-up $\cblow \hat Y$. We
now show that we can glue $\cblow \hat Y$ with $Y-Y_0$ to form a
$\CC\sta$-intrinsic blow-up of $Y$.

We let $A=\Gamma(\cO_Y)$, $I\sub A$ the ideal defining $Y_0\sub
Y$; thus $\hat A=\lim A/I^n=\Gamma(\cO_{\hat Y})$. We cover
$\cblow\hat Y$ by $\CC\sta$-invariant affine $Z_i$; we denote
$B_i=\Gamma(\cO_{Z_i})$, and $\xi_i\in B_i$ the element defining
the exceptional divisor of $\pi_i: Z_i\to \hat Y$. For each $i$,
we glue $Y$ and $Z_i$ as follows. Let $\hat I=\lim I/I^n$ be the
ideal defining $Y_0\sub \hat Y$. We form the localization $\hat
A_{\hat I}$ and $(B_i)_{(\xi_i)}$. Without loss of generality, we
can assume that for each $i$, $\hat A_{\hat I}=(B_i)_{(\xi_i)}$.

For each $i$, we form the direct sum module $B_i\oplus A$ and
define the ring structure: $(b,a)\cdot(b',a')=(bb',aa')$. We then
define $\ti B_i$ to be the kernel
$$\ti B_i=\ker\{B_i\oplus A\lra \hat A_{\hat I}\},
$$
where $A\to \hat A_{\hat I}$ is the composite of the tautological $A\to \hat A$ with the localization homomorphism;
$B_i\to \hat A_{\hat I}$ is the negative of the local homomophism $B_i\to (B_i)_{(\xi_i)}=\hat A_{\hat I}$.

It is routine to check that $\spec \ti B_i$ patch to form a scheme together with a morphism
$$\pi: \cblow Y\lra Y;
$$
it satisfies $\cblow Y\times_{Y}\hat Y=\cblow \hat Y$. The scheme $\cblow Y$ is the $\CC\sta$-intrinsic blow-up
of $Y$ along $Y_0$.
By the independence on the embedding proved in the formal case,
it is canonically defined based on the $\CC\sta$-structure of $\hat Y$.

\bigskip
\noindent
{\bf The scheme case}: {\sl Let $Y_0\sub Y$ be as in the previous case except that it is no longer  assumed
to be affine. Let $\hat Y$ be the formal completion of $Y$ along $Y_0$.
We assume $\hat Y$ is a $\CC\sta$-scheme so that $(\hat Y)^{\CC\sta}=Y_0$.
}

We cover $Y$ by affine open $Y_i\to Y$; we let $Y_{i,0}=Y_i\cap
Y_0$, and let $\hat Y_{i}$ be the formal completion of $Y_i$ along
$Y_{i,0}$. Since $\hat Y_i=\hat Y\times_YY_i$, it is a
$\CC\sta$-scheme. We let $\cblow Y_i$ be its $\CC\sta$-intrinsic
blow up. For each pair $(i,j)$, using that $\hat
Y_i\times_YY_j=\hat Y_j\times_Y Y_i$ as $\CC\sta$-schemes, we have
a canonical isomorphism $\cblow Y_i\times_Y Y_j=\cblow Y_j\times_Y
Y_i$. This shows that we can patch $\cblow Y_i$ to form a scheme,
which we call the $\CC\sta$-intrinsic blow-up of $Y$, and denote
by $\cblow Y$.

\bigskip

\noindent {\bf The DM-stack case}: {\sl We let $X$ be a DM-stack
with a $\CC\sta$-action. We assume that the multiplication morphism
$\si: \CC\sta\times X\to X$ is representable. }

We let $Y$ be a scheme and $Y\to X$ be an \'etale morphism.
%(We do not assume $Y_i\to X$ is $\CC^*$-equivariant.)
We let $Y_0=Y\times_X X^{\CC^*}$, and let $\hat Y$ be the formal
completion of $Y$ along $Y_0$. Using that the multiplcation
morphism $\si$ is representable, one checks that the
$\csta$-action on $Y$ induces a $\csta$-action on $\hat Y$ such
that $\hat Y^{\CC\sta}=Y_0$. Therefore, we can form the
$\CC\sta$-intrinsic blow-up $\cblow Y$.

This construction is canonical. Let $Y'\to X$ be another \'etale
chart of $X$, and let $\cblow Y'$ be the $\CC\sta$-intrinsic
blow-up of $Y'$. Then $Y\times_X Y'$ is \'etale over $X$, and has
its $\CC\sta$-intrinsic blow-up. One checks that $\cblow
(Y\times_X Y')$ is canonically isomorphic to both $\cblow
Y\times_XY'$ and $\cblow Y'\times_XY$.

The $\csta$-intrinsic blow-up $\cblow X$ is the stack that comes
with covers $\cblow Y_i$ and products \beq\label{prod} \cblow
Y_i\times_{\cblow X}\cblow Y_j=\cblow(Y_i\times_X Y_j). \eeq
%One checks that this defines a DM-stack $\cblow X$ together with a projection
%$$\pi: \cblow X\lra X.$$

%We then cover $X$ by affine \'etale charts $Y_i\to X$; let $\text{Iso}(Y_i,Y_j)$ be
%the data defining the stack $X$. To define $\cblow X$, we form the $\CC\sta$-intrinsic blow ups
%$\cblow Y_i$. For each pair $(i,j)$, since $\text{Iso}(Y_i, Y_j)$ are \'etale over $Y_i$ and $Y_j$,
%we can form its $\CC\sta$-intrinsic blow up $\cblow \text{Iso}(Y_i, Y_j)$. By the construction
%of $\CC\sta$-intrinsic blow up, it is \'etale over both $\cblow Y_i$ and $\cblow Y_j$.
%We set
%$$\text{Iso}(\cblow Y_i, \cblow Y_j)\defeq\cblow \text{Iso}(Y_i,Y_j).
%$$
%Note that it comes with the tautological projections $\text{Iso}(\cblow Y_i, \cblow Y_j)\to
%\cblow Y_i$ and to $ \cblow Y_j$.

\begin{prop}
Let $X$ be a $\CC\sta$-equivariant \DM stack as stated, and let
$Y_i\to X$ be an \'etale atlas of $X$. Then the collection $\cblow
Y_i$ together with the morphisms $\cblow(Y_i\times_X Y_j)\to
\cblow Y_i$ and $\cblow(Y_i\times_X Y_j)\to \cblow Y_j$ form a
DM-stack $\cblow X$ with a $\csta$-equivariant projection $\pi:
\cblow X\to X$.
\end{prop}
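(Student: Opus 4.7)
The plan is to show that $\cblow(-)$ commutes with \'etale base change on schemes of the type considered, and then to apply standard descent for \'etale groupoids. The key technical input is the canonical identification in \eqref{prod}, which I will use both to get the source/target morphisms and to verify the cocycle on triple overlaps.

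First I would establish the following base-change property: if $W\to Y$ is an \'etale, $\csta$-equivariant morphism of affine schemes of the type considered in the affine case (i.e.\ $\csta$ acts on the formal completion $\hat Y$ along the fixed locus $Y_0$, and similarly for $W$), then there is a canonical isomorphism
\[
\cblow W \;\cong\; \cblow Y \times_Y W.
\]
This reduces, via the affine gluing used to define $\cblow Y$, to the formal case, where it follows from the independence-of-embedding lemma already proved: given a $\csta$-equivariant embedding $\hat Y \hookrightarrow V$ into a smooth formal $\csta$-scheme, one obtains a $\csta$-equivariant embedding $\hat W \hookrightarrow V \times_Y W$, and \'etaleness of $W\to Y$ guarantees that $V\times_Y W$ is again smooth and that blowing up the fixed locus commutes with \'etale base change on $V$. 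Then the ideal-theoretic description \eqref{I-bar} pulls back correctly, giving the required isomorphism.

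With this in hand, for an \'etale atlas $Y_i\to X$ the fiber products $Y_{ij}\defeq Y_i\times_X Y_j$ are \'etale over both $Y_i$ and $Y_j$ and inherit a $\csta$-action whose fixed locus maps to $X^\csta$ in the expected way (using representability of $\csta\times X\to X$, exactly as was used to build $\cblow Y_i$ from $Y_i\to X$). The base-change property then yields canonical isomorphisms
\[
\cblow Y_{ij} \;\cong\; \cblow Y_i \times_{Y_i} Y_{ij} \;\cong\; \cblow Y_j \times_{Y_j} Y_{ij},
\]
which is \eqref{prod}, and in particular provides the two \'etale source/target morphisms $\cblow Y_{ij}\rightrightarrows \cblow Y_i$. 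Applying the base-change property once more on triple overlaps $Y_{ijk}$ gives the cocycle identity
\[
\cblow Y_{ij}\times_{\cblow Y_j} \cblow Y_{jk} \;\cong\; \cblow Y_{ijk} \;\cong\; \cblow Y_{ik}\times_{\cblow Y_i} \cblow Y_{ij},
\]
so the groupoid $\bigsqcup_{i,j}\cblow Y_{ij} \rightrightarrows \bigsqcup_i \cblow Y_i$ is an \'etale groupoid in schemes. Its quotient is the desired stack $\cblow X$, and it is DM because the source and target are \'etale. The $\csta$-action on each $Y_i$ induces a $\csta$-action on $\cblow Y_i$ (the construction of \eqref{I-bar} is manifestly $\csta$-equivariant), and these assemble into a $\csta$-action on $\cblow X$; the projections $\cblow Y_i\to Y_i$ are $\csta$-equivariant and compatible with the groupoid structure, yielding the $\csta$-equivariant morphism $\pi:\cblow X\to X$.

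The main technical obstacle is the \'etale base-change statement in the first step: one has to check that the canonical construction given by \eqref{I-bar}, phrased in terms of a non-canonical smooth ambient $V$, is compatible with \'etale localization on the base. The independence from $V$ proved in the formal case handles the choice of ambient scheme, but one must separately verify that \'etale morphisms of the underlying $\csta$-schemes can be lifted to \'etale morphisms of ambient smooth $\csta$-schemes so that the ideal-theoretic blow-up construction pulls back correctly; everything else is a routine groupoid descent.
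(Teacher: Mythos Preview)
Your proposal is correct and follows essentially the same approach as the paper. The paper's proof is a single sentence---``The proof follows from that the $\CC\sta$-intrinsic blow up construction is canonical''---because the key base-change isomorphism \eqref{prod} (equivalently, your first step) is asserted in the paragraph immediately preceding the proposition; you have simply spelled out in detail what ``canonical'' entails and how it feeds into the standard \'etale-groupoid descent, including the cocycle check on triple overlaps that the paper leaves implicit.
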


\begin{proof}
The proof follows from that the $\CC\sta$-intrinsic blow up construction is canonical.
\end{proof}

We call $\pi:\cblow X\to X$ the $\CC\sta$-intrinsic blow-up of $X$.

\subsection{Obstruction theory of $\bar X$--local theory}\label{section-redobth}

We now investigate the obstruction theory of $\bar X$ near the exceptional divisor of $\bar X\to X$.
For notational simplicity, in the following we let $T=\csta$ and call the $\csta$-actions on
$X$ and $\bar X$ the $T$-actions.

We begin with the local situation.
\vsp

\noindent
{\bf The set-up}: {\sl
We let
%\begin{equation}\label{locdefeq1}
$X\sub Y=\spec A[\![x]\!]$, where $\spec A$ is smooth and $x=(x_1,\cdots,x_m)$,
be a closed subscheme in smooth affine scheme. We let ${T}$ acts on $Y$ so that it acts on $A$ trivially and on $x_j$ via
$x_j^\sigma=\sigma^{l_j} x_j$, $l_j\ne 0$. We suppose $X\sub Y$ is ${T}$-invariant and
admits a ${T}$-equivariant symmetric obstruction theory.
}

Because $X$ has a symmetric obstruction theory,
by \cite{Beh-symm}, the embedding $X\sub Y$ and its obstruction theory is
defined by the vanishing of an almost closed
1-form
\begin{equation}\label{locdefeq2}\omega=\alpha+\sum_{j=1}^m f_j\, dx_j\in \Omega_{Y},
\quad \alpha\in \Omega_A\otimes_A A[\![x]\!]\and f_j\in A[\![x]\!].
\end{equation}
(Almost closed means $d\omega|_X=0$.) Because the obstruction
theory is ${T}$-equivariant, $\omega$ can be chosen to be
$\CC^*$-invariant. If we let $I_X$ be the ideal sheaf of $X\sub
Y$, then the obstruction theory is given by \beq\label{X-symm}
V\bul\defeq [\cV\mapright{d\omega}\cV\dual]\lra
\LL\gmo_X=[I_X/I_X^2\to \Omega_Y|_X], \quad \cV=T_Y|_X. \eeq

We remark that if $\omega'$ is another ${T}$-invariant almost closed 1-form defining $X\sub Y$ and its
symmetric obstruction theory, then necessarily $\omega-\omega'\in I_X^2\cdot\Omega_Y$.

%We remark that, since among the defining equations  $f_{u_i}$ and
%$f_{x_j}$, only $f_{u_i}$ are invariant, $\hat X^T\sub {\hat
%Y}^T=\spec\kk[\![u]\!]$ is defined by $n$ equations
%$f_{u_1},\cdots,f_{u_n}$. Thus the induced obstruction theory of
%$\hat X^T$ is also symmetric.
\vsp We now turn our attention to the ${T}$-intrinsic blow-up
$\bar X$ of $X$. Since $Y$ is smooth, the ${T}$-intrinsic blow-up
$\bar Y$ of $Y$ is the usual blow-up of $Y$ along $Y^{{T}}$. Let
${\bar\pi}: \bar Y\to Y$ be the projection, let $\bar E\sub \bar
Y$ be the exceptional divisor, and let $\xi\in\Gamma(\cO_{\bar
Y}(\bar E))$ be the defining equation of $\bar E\sub\bar Y$.

We consider the pull-back
$${\bar\pi}\sta\omega= {\bar\pi}\sta\alpha+\sum
{\bar\pi}\sta f_j\,dx_j\in\bar\pi\sta\Omega_{\bar Y}.
$$
Since $\omega$ is $T$-invariant and $dx_i$ has non-zero weights,
%$\xi\upmo{\bar\pi}\sta f_j$ are regular and 
$\xi\upmo{\bar\pi}\sta f_j\, dx_j$ are
regular sections in ${\bar\pi}\sta\Omega_{Y}\umv(-\bar E)$.

By definition, $\bar X\sub\bar Y$ is defined by the
vanishing of
${\bar\pi}\sta \alpha$ and $\xi\upmo{\bar\pi}\sta f_j$'s.
To put these into a compact form, we introduce
$\eps_j=\bar\pi\sta dx_j$, which span the sheaf
$\bar\pi\sta\Omega_{Y}\umv$, where $\Omega_Y\umv$ is the subsheaf of $\Omega_{Y}$ spanned by $dx_j$'s.
We let $\Omega_{Y}^{\text{fix}}=\Omega_A\otimes_A A[\![x]\!]$ be the fixed part,
and let $\bar\cV_{-1}$ be the sheaf whose dual is
\begin{equation}\label{eq3.09}\bar \cV_{-1}\dual \defeq
{\bar\pi}\sta\Omega_{Y}^{\text{fix}}\oplus
{\bar\pi}\sta\Omega_{Y}\umv(-\bar E) \equiv
\ker\{{\bar\pi}\sta\Omega_{Y}\lra {\bar\pi}\sta(\Omega_{
Y}\umv)|_{{Y}^{T}}\}.
\end{equation}
Then
\beq\label{eq-2010-1} \bar\omega={\bar\pi}\sta \alpha+\sum
\xi\upmo{\bar\pi}\sta f_j \cdot \eps_j\in\Gamma(\bar Y, \bar \cV_{-1}\dual).
\eeq

This way, $\bar X$ has the form
\beq\label{hatbarx} \bar
X=(\bar\omega=0)\sub\bar Y;
\eeq
its obstruction theory is
\beq\label{eq3.2}\bar V\bul\defeq
[\bar\cV_{-1}\mapright{d\bar\omega\dual} \bar \cV_0]
\lra \LL_{\bar X}\gmo=[I_{\bar X}/I^2_{\bar X}\to\Omega_{\bar Y}|_{\bar X}],\quad \bar\cV_0=\Omega_{\bar{Y}}|_{\bar X}.
\eeq

\vsp
For future reference, we comment that the complex $\bar V\bul$ can
be constructed directly from the complex $V\bul$.  Let $\cV=T_Y|_X$.
Let $\pi: \bar X\to X$ be the projection and $E\sub \bar X$ be the exceptional divisor.
Let $\ell$ be the tautological line bundle of the exceptional divisor $E$; namely, tautologically
$\ell\sub \pi\sta N_{Y^{T}\!\!/Y}$. Then
\beq\label{V0}
\bar\cV_0\dual|_{\bar X}=\ker\{\pi\sta\cV\to \pi\sta N_{Y^{T}\!\!/Y}/\ell\}.
\eeq
%This form will not be used in this paper.
%or more canonically,
%\beq\label{ind-V}
%\bar\cV_0=\ker\{\bar\pi\sta\cV\to \bar\pi\sta( \cV|_{Y^{T}}\umv)/\ell\}\and
%\bar\cV_1=\ker\{\bar\pi\sta\cV\dual\to \bar\pi\sta(\cV\dual|_{Y^{T}}\umv)\}.
%\eeq

\vsp
It is easy to describe the fixed locus $\bar X^{T}$ and its obstruction theory.
Let $\cN$ be the normal bundle $N_{Y^{T}\!\!/Y}$.
Since $Y$ is smooth, the exceptional divisor $\bar E=\PP\cN$. To each $k\in\ZZ$, we
let $\cN_{(k)}$ be the weight $k$ piece of the ${T}$-decomposition of $\cN$.
Then
$$
\bar Y^T=\coprod_{k} \bar Y^{T,k},\quad \bar Y^{{T},k}=\PP\cN_{(k)}.
$$
Accordingly,
\beq\label{decomp-X} \bar
X^T=\coprod_{k\in\ZZ} \bar X^{T,k},\quad \bar X^{T,k}=\bar X\cap \bar Y^{T,k}.
\eeq

The obstruction theory of $\bar X^{T}$, following \cite{GrPand},
is given by the invariant part of the obstruction theory of $\bar
X$. To proceed, we reindex the $x_j$'s as $x_{k,j}$ indexed by
$(k,j)$ so that the ${T}$-action is $t\cdot x_{k,j}=t^k x_{k,j}$.
Then
$$\omega=\alpha+\sum_{(k,j)}f_{k,j}dx_{k,j}\and
\bar\omega=\bar\pi\sta\alpha+\sum_{(k,j)}\xi\upmo\bar\pi\sta f_{k,j}\cdot \eps_{k,j}.
$$

Restricting to $\bar Y^{{T},k}$, the ${T}$-action on $f_{i,j}$ (resp. $\xi$) has
weight $-i$ (resp. $k$), among all $\xi\upmo \bar\pi\sta f_{i,j}|_{\bar Y^{T,k}}$, the
${T}$-invariant ones are
$$\xi\upmo \bar\pi\sta f_{-k,j}|_{\bar Y^{T,k}},\quad 1\le j\le e_{-k}=\dim \PP\cN_{(-k)}/Y^{T}+1.
$$
Therefore, the obstruction theory of $\bar X^{T,k}\sub \bar
Y^{T,k}$ is induced by $\pi\sta\alpha|_{\bar Y^{{T},k}}$ and
$e_{-k}$ additional equations. In particular,
$$\text{vir}.\dim \bar X^{T,k}=\text{vir}.\dim X^T+\dim \PP\cN_{(k)}/Y^T-e_{-k}=e_k-e_{-k}-1.
$$
\vsp

We get more from this description of the obstruction theory of $\bar X^{{T},k}$.
The defining equations of $\bar X^{{T},k}\sub \bar Y^{{T},k}$ divide into two groups. The
first is
\beq\label{1}
\bar\alpha_0\defeq \bar\pi\sta\alpha|_{\bar Y^{{T},k}}=0.
\eeq
By the construction of $\bar Y^{{T},k}$, $\bar\alpha_0=\bar\pi\sta\alpha|_{x_{i,j}=0}\in A$; it
defines $\PP_X\cN_{(k)}\defeq \PP\cN_{(k)}\times_{Y^{T}}X^{T}$.
The second group is
\beq\label{2}
\bar\beta_{-k,j}\defeq \xi\upmo\bar\pi\sta f_{-k,j}|_{\bar Y^{{T},k}}=0,\quad 1\le j\le e_{-k}.
\eeq
Expanding $f_{-k,j}=\sum_J a_{-k,j}^J x^J$, where $J$ is the usual multi-index convention and
$a_{-k,j}^J\in A$, we see that
$$\bar\beta_{-k,j}=\sum_{1\le i\le e_k}\xi\upmo a_{-k,j}^{-k,i} \bar\pi\sta x_{-k,i},
$$
which involves weight $-k$ linear (in the $x$ variables) terms in
$f_{-k,j}$. The equations $\bar\beta_{-k,j}=0$ defines the relative
obstruction theory of $\bar X^{{T},k}/\PP_X\cN_{(k)}$.

We now put this in the form of arrows in the derived category. Let
$\pi_{(k)}:\bar X^{T,k}\to Y$ be the tautological projection.
Firstly, the fixed part is
$$\bar V\bul|_{\bar X^{{T},k}}^{\text{fix}}=[(\pi_{(k)}\sta \Omega_Y^{\text{fix}})\dual\oplus \pi_{(k)}\sta \cN_{(-k)}(-1)\to
\Omega_{\bar Y^{{T},k}}|_{\bar X^{{T},k}}].
$$
The obstruction theory of $\PP_X\cN_{(k)}$ is
\beq\label{3}
F_{(k)}\bul\defeq [(\bar\pi\sta \Omega_Y^{\text{fix}})\dual|_{\PP_X\cN_{(k)}}\to
\Omega_{\bar Y^{{T},k}}|_{\PP_X\cN_{(k)}}]\lra \LL_{\PP_X\cN_{(k)}};
\eeq
the relative obstruction theory of $\bar X^{{T},k}/\PP_X\cN_{(k)}$ is
\beq\label{4}
E_{(k)}[-1]\defeq \pi_{(k)}\sta \cN_{(-k)}(-1)\lra \LL_{\bar X^{{T},k}/\PP_X\cN_{(k)}}.
\eeq
(Here the arrows in \eqref{3} and \eqref{4} are defined by $\bar\alpha_0$ and $\bar\beta_{-k,j}$, respctively.)

Using the explicit form of the defining equations inducing the
arrows defining the obstruction theories of $\bar X^{{T},k}$,
$\PP_X\cN_{(k)}$ and $\bar X^{{T},k}/\PP_X\cN_{(k)}$, we have the
following commutative diagram of distinguished triangles
\beq\label{diag-dt}
\begin{CD}
F_{(k)}\bul @>>>  \bar V\bul|_{\bar X^{T,k}}^{\text{fix}} @>>>  E_{(k)}[-1] @>+1>>  \\
@VVV  @VVV @VVV\\
 \LL_{\PP_X\cN_{(k)}}|_{\bar X^{T,k}} @>>> \LL_{\bar X^{T,k}} @>>> \LL_{\bar X^{T,k}/\PP_X\cN_{(k)}} @>+1>>.
\end{CD}
\eeq

Finally, we comment that this diagram is independent of the choice
of the defining equation $\omega$. (It seems to depend on the
choice of the embedding $X\sub Y$.) As argued, if $\omega'$ is
another almost closed 1-form like $\omega$, then
$\omega-\omega'\in I_X^2\cdot\Omega_Y$. Since the left and the
right vertical arrows defined by $\omega$ only use $\bar\alpha_0$
and $\bar\beta_{-k,j}$, to show that they are independent of the
choice of $\omega$, we only need to show that 
\beq\label{incl}
\bar\alpha_0-\bar\alpha_0'\in (\bar\alpha_0)^2, \quad \text{and
for all $j$}:\ \ \bar\beta_{-k,j} -\bar\beta'_{-k,j}\in
(\{\bar\beta_{-k,l}\}_{l=1}^{e_{-k}})^2. 
\eeq 
Using the explicit
forms given after \eqref{1} and \eqref{2}, one sees that
\eqref{incl} is true. This shows that \eqref{diag-dt} is independent of
the choice of $\omega$.

\subsection{The virtual normal cone}

The defining equation \eqref{hatbarx} also
provides a canonical embedding of the the normal cone
\beq\label{C} C_{\bar X/\bar Y}\sub \bar\cV_{-1}\dual|_{\bar X}. \eeq

The ${T}$-action induces a homomorphism $\cO_{Y}\to T_{Y}$; its
pull-back $\cO_{\bar Y}\to {\bar\pi}^*T_{Y}$ vanishes simply along
$E$. This gives an injective homomorphism $\cO_{\bar
Y}(E)\hookrightarrow {\bar\pi}^*T_{Y}$ and its surjective dual
\beq\label{eq3.3} {\bar\pi}^*\Omega_{Y}\lra \cO_{\tilde Y}(-E).
\eeq
Since \eqref{eq3.3} factors through the moving part of
${\bar\pi}^*\Omega_{Y}$, using \eqref{eq3.09}, it induces
a surjective homomorphism \beq\label{surj}\bar\si: \bar\cV_{-1}\dual\lra
\cO_{\bar Y}(-2E).
\eeq
We let
$$\ker\{\bar\si\} =\ker\{\bar\si: \bar
\cV_{-1}\dual\to \cO_{\bar Y}(-2E)\}\sub \bar\cV_{-1}\dual.
$$

\begin{lemm}\label{ob-surj}
The support of the cone $C_{\bar X/\bar Y}$ (cf. \eqref{C}) lies in
the subbundle $\ker\{\bar\si\}|_{\bar X} $.
\end{lemm}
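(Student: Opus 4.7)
Proof plan. The statement is local on $\bar Y$, so I work in the affine setting of this section: $X=(\omega=0)\subset Y$ for a $T$-invariant almost-closed 1-form $\omega=\alpha+\sum_j f_j\,dx_j$, $\bar\pi\colon\bar Y\to Y$ the blow-up of $Y$ along $Y^T$, and $\bar X=(\bar\omega=0)\subset\bar Y$ cut out by $\bar\omega\in\Gamma(\bar Y,\bar\cV_{-1}^\vee)$ as in \eqref{eq-2010-1}. My aim is to establish $\bar\sigma(\bar\omega)\in I_{\bar X}^2\cdot\cO_{\bar Y}(-2E)$; dualizing will then force the cone to lie in $\ker\{\bar\sigma\}|_{\bar X}$.

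First I would compute $\bar\sigma(\bar\omega)$ locally. Since $\bar\sigma$ is induced by contraction with the $T$-vector field $\bar\pi^*\xi_Y=\xi\cdot V$, unwinding the definitions gives
\[ \bar\sigma(\bar\omega)=\bar\pi^*\bl\iota_{\xi_Y}\omega\br \]
as a section of $\cO_{\bar Y}(-2E)\hookrightarrow\cO_{\bar Y}$; this is legitimate because $h\defeq \iota_{\xi_Y}\omega=\sum_j l_j x_jf_j$ lies in $I_{Y^T}^2$ by weight reasons. In a blow-up chart with $u_j=\xi^{-1}\bar\pi^*x_j$ and $v_j=\xi^{-1}\bar\pi^*f_j$, this reads $\bar\sigma(\bar\omega)=\sum_j l_j u_j v_j$, where the $v_j$'s are the moving defining equations of $\bar X$.

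Next I would exploit the structure of $h$. Cartan's formula applied to the $T$-invariant $\omega$ reads $0=L_{\xi_Y}\omega=d(\iota_{\xi_Y}\omega)+\iota_{\xi_Y}(d\omega)$, so $dh=-\iota_{\xi_Y}(d\omega)\in I_X\cdot\Omega^1_Y$ by the almost-closed condition $d\omega\in I_X\cdot\Omega^2_Y$. Combined with $h|_X=0$, both $h$ and $dh$ vanish along $X$. The key step is the upgrade $h\in I_X^2$: the Euler identity $\sum_k l_k x_k\partial_{x_k}f_j=-l_jf_j$ (from the $T$-homogeneity of $f_j$) combined with the antisymmetric relations $\partial_{x_k}f_j\equiv\partial_{x_j}f_k\pmod{I_X}$ (from almost-closedness) permit a symmetrization of $h=\sum_j l_jx_jf_j$ that rewrites it as an $\cO_Y$-linear combination of products of pairs of generators of $I_X$.

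Granting $h\in I_X^2$, pulling back and using $\bar\pi^*I_X\cdot\cO_{\bar Y}\subset I_{\bar X}$ (because both $\bar\pi^*g_i$ and $\bar\pi^*f_j=\xi v_j$ lie in $I_{\bar X}$) gives $\bar\sigma(\bar\omega)\in I_{\bar X}^2\cdot\cO_{\bar Y}(-2E)$. Dualizing $\bar\sigma$ and composing with the conormal surjection $\bar\cV_{-1}|_{\bar X}\twoheadrightarrow I_{\bar X}/I_{\bar X}^2$ sends the local generator of $\cO_{\bar Y}(2E)|_{\bar X}$ to $\bar\sigma(\bar\omega)\bmod I_{\bar X}^2=0$; by duality of $C_{\bar X/\bar Y}\hookrightarrow N_{\bar X/\bar Y}=(I_{\bar X}/I_{\bar X}^2)^\vee\hookrightarrow\bar\cV_{-1}^\vee|_{\bar X}$ with this conormal map, $\bar\sigma|_{\bar X}$ annihilates the cone, yielding the claim. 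The hard part is the upgrade $h\in I_X^2$: the naive one-variable example $Y=\CC$, $X=(x^2)$, $h=x^3\notin I_X^2=(x^4)$ shows it is false without both the $T$-equivariance (Euler) and the almost-closedness (antisymmetry), so the work is in combining them with careful index bookkeeping to extract the required cancellations.
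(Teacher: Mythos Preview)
Your approach has a fatal gap: the claimed upgrade $h=\iota_{\xi_Y}\omega\in I_X^2$ is \emph{false} in general, and with it the scheme-theoretic containment $\bar\sigma(\bar\omega)\in I_{\bar X}^2$ that you need. What Euler plus almost-closedness actually buy you is only $\partial_{x_k}h\in(x_j)I_X$ for all $k$ (your symmetrization computation gives exactly this), and this does \emph{not} force $h\in I_X^2$.

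Here is an explicit counterexample. Take $Y=\spec\CC[\![x,y]\!]$ with $T$-weights $(1,-2)$ and
\[
\omega=(2x^3y^2+x^5y^3)\,dx+x^4y\,dy.
\]
Then $\omega$ is $T$-invariant, $I_X=(2x^3y^2+x^5y^3,\,x^4y)$, and $d\omega=3x^5y^2\,dy\wedge dx$ with $x^5y^2=xy\cdot x^4y\in I_X$, so $\omega$ is almost closed. One computes $h=x f_1-2y f_2=x^6y^3$. But the generators of $I_X^2$ have lowest monomials $x^6y^4,\ x^7y^3,\ x^8y^2$, so $x^6y^3\notin I_X^2$. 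Passing to the blow-up chart $x=\xi,\ y=\xi u$, one finds $I_{\bar X}=(\xi^4u)$ and $\bar\sigma(\bar\omega)=\xi^7u^3\notin I_{\bar X}^2=(\xi^8u^2)$. So the cone is \emph{not} scheme-theoretically contained in $\ker\bar\sigma$; only its support is (indeed $(\xi^3u^2)^2\in(\xi^4u)$, so the obstruction is nilpotent). This is exactly why the Lemma is stated for the \emph{support}.

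The paper proceeds quite differently. Instead of an ideal-membership statement, it invokes the arc-wise criterion from \cite{Kiem-Li}: it suffices that $\bar\varphi^\ast(\bar\sigma\circ\bar\omega)\in t\cdot\bar\varphi^\ast\bar I$ for every arc $\bar\varphi$. This is then reduced to the estimate $\varphi^\ast h\in(t^{a+2b})$ on $Y$ (Lemma~\ref{sublem}), where $(t^a)=\varphi^\ast I_X$ and $(t^b)=(\varphi^\ast x_j)$. The proof of that estimate is the genuine content: one introduces the two-parameter map $\Phi(t,s)=(\phi_i(t);\psi_j(t)e^{l_js})$ built from the arc and the $T$-flow, computes $d(\Phi^\ast\omega)$ two ways, and uses almost-closedness to bound the $dt\wedge ds$-coefficient. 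This argument uses the $T$-action and the almost-closed condition together, but in a dynamical rather than ideal-theoretic way, and yields precisely the set-theoretic conclusion that is actually true.
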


The proof relies on the following criterion proved in \cite[Lemma
4.5]{Kiem-Li}. Let $\bar I$ be the ideal sheaf of $\bar X\sub \bar
Y$. \vskip3pt

\noindent {\bf Criterion}: {\sl The support of $C_{\bar X/\bar
Y}\sub \bar\cV_{-1}\dual|_{\bar X}$ lies in $\ker\{\bar\si\}$ if the
following holds: for any closed $\bar o\in \bar X$ and $\bar\varphi: \spec
k[\![u]\!]\to
\bar Y$ with $\bar\varphi(0)=\bar o$, %suppose $\bar\varphi\sta \bar I=(u^a)$,
$\bar\varphi\sta(\bar \si\circ \bar\omega)\in t\cdot
\bar\varphi\sta \bar I$. } \vskip3pt

Let $I$ be the ideal sheaf of $X\sub Y$; let $F=\sum_{i=1}^m l_i
x_i f_i\in \Gamma(Y,\cO_Y)$. In the following, we denote by
$(\{x_i\}_1^n)$ the ideal $(x_1,\cdots,x_n)$.

\begin{lemm}\label{sublem}
Let $\varphi: \spec \kk[\![t]\!]\to Y$ be any morphism. Then
$$\varphi\sta(F)\in t\cdot \varphi\sta I\cdot (\{\varphi\sta x_i\}_1^n).
$$
\end{lemm}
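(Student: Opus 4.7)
The plan is to use Cartan's magic formula together with almost closedness of $\omega$ to establish the differential containment $dF\in I\cdot(x_1,\dots,x_m)\cdot\Omega_Y$, and then integrate it along $\varphi$ using the discrete valuation ring structure of $\kk[\![t]\!]$.

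I first let $v=\sum_i l_i x_i\partial_{x_i}$ be the vector field on $Y$ generating the $T$-action, so that $\iota_v\omega=F$. Since $\omega$ is $T$-invariant, its Lie derivative $\cL_v\omega$ vanishes, hence Cartan's identity $\cL_v=d\iota_v+\iota_v d$ yields $dF=-\iota_v d\omega$. The almost closed condition says $d\omega\in I\cdot\Omega_Y^2$, while the explicit form of $v$ gives $\iota_v(\Omega_Y^2)\subset(x_1,\dots,x_m)\cdot\Omega_Y$ (since $\iota_v dx_j=l_j x_j$ and $\iota_v da_k=0$). Combining these yields the key inclusion $dF\in I\cdot(x_1,\dots,x_m)\cdot\Omega_Y$.

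Next, since $Y=\spec A[\![x]\!]$ is formal along the fixed locus $\spec A\subset Y$, any morphism $\varphi:\spec\kk[\![t]\!]\to Y$ sends the closed point of the source into $\spec A$, so $g_i\defeq\varphi\sta(x_i)\in(t)\cdot\kk[\![t]\!]$ for each $i$. Pulling back the inclusion for $dF$ along $\varphi$ and using $\varphi\sta(dx_j)=g_j'(t)\,dt$ and $\varphi\sta(da_k)=\bar a_k'(t)\,dt$ gives $d(\varphi\sta F)/dt\in\varphi\sta(I)\cdot(g_1,\dots,g_m)\subset\kk[\![t]\!]$. Since $g_i(0)=0$, we have $\varphi\sta(F)(0)=F(\varphi(0))=0$, so $\varphi\sta(F)=\int_0^t d(\varphi\sta F)/ds\,ds$. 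As $\kk[\![t]\!]$ is a DVR the ideal $\varphi\sta(I)\cdot(g_1,\dots,g_m)$ equals $(t^N)$ for some $N$, and termwise integration in characteristic zero maps $(t^N)$ into $(t^{N+1})=t\cdot(t^N)$, giving the desired $\varphi\sta(F)\in t\cdot\varphi\sta(I)\cdot(\{\varphi\sta x_i\}_{i=1}^m)$.

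The main substantive step is the derivation of the inclusion $dF\in I\cdot(x_i)\cdot\Omega_Y$, which crucially uses both the almost closedness of $\omega$ and the fact that the generating vector field $v$ lies in $(x_1,\dots,x_m)\cdot T_Y$; neither hypothesis by itself suffices. Once that inclusion is in hand, the remaining argument is routine ideal-theoretic bookkeeping relying on the DVR structure of $\kk[\![t]\!]$.
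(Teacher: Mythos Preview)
Your proof is correct. The underlying mechanism is the same as the paper's---both exploit $T$-invariance of $\omega$ together with almost closedness to bound the $t$-derivative of $\varphi^*F$, then integrate using $\varphi^*F(0)=0$---but the execution is different. The paper introduces an auxiliary two-parameter map $\Phi(t,s)=(\phi_i(t);\psi_j(t)e^{l_js})$, which is the flow of $\varphi$ along the $T$-action, and computes $d(\Phi^*\omega)=\Phi^*(d\omega)$ explicitly in coordinates: the $dt\wedge ds$ coefficient on the left is $(\varphi^*F)_t$, and the right-hand side is bounded using almost closedness. Your use of Cartan's identity $dF=d(\iota_v\omega)=-\iota_v d\omega$ is the intrinsic, coordinate-free version of precisely this manoeuvre, and it produces the clean global containment $dF\in I\cdot(x_1,\dots,x_m)\cdot\Omega_Y$ \emph{before} pulling back along $\varphi$; the paper instead pulls back first and then computes. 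Your route is shorter, avoids the auxiliary parameter $s$, and makes the separate roles of the two hypotheses (invariance for $\cL_v\omega=0$, almost closedness for $d\omega\in I\cdot\Omega_Y^2$) completely transparent. The paper's explicit calculation yields an ostensibly sharper intermediate bound $(\varphi^*F)_t\in(t^{a+2b-1})$, but after integration it only uses the same conclusion $\varphi^*F\in(t^{a+b+1})$ that you obtain directly.
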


\begin{proof}
It suffices to prove the case where $A=\kk[\![u]\!]$,
$u=(u_1,\cdots,u_m)$.

Let $\phi_i(t)=\varphi\sta u_i$ and $\psi_i(t)=\varphi\sta x_i$;
let $a$ and $b$ be defined by \beq\label{ab} \varphi\sta
I=(t^a)\and (\{\psi_i(t)\}_1^n)=(t^b). \eeq Then $\varphi\sta
(F)\in t\cdot \varphi\sta I\cdot (\{\varphi\sta x_i\}_i)$ is
equivalent to $\varphi\sta F=\sum l_i \psi_i\cdot \varphi\sta f_i
\in (t^{a+b+1})$.

We introduce
\beq\label{uv} \Phi(t,s)=(\phi_i(t);\psi_j(t) e^{l_j
s})\defeq (\{u_i(t,s)\}_1^n;\{x_{j}(t,s)\}_1^m), \eeq
which is a homomorphism $\kk[\![u,x]\!]\to \kk[\![t,s]\!]$. Then
$$d x_j(t,s)=\psi_j' e^{l_j s} \, dt+l_j\psi_j e^{l_j s}\, ds.
$$
Using that $\omega$ is $\CC^*$-equivariant, we  have
$(f_i)^\si=\si^{-l_i} f_i$. Thus,
$$\Phi\sta\omega=\sum\varphi\sta f_i\cdot  \psi_i' \, dt
+\sum l_i \varphi\sta f_i \cdot \psi_i \, ds.
$$
Therefore, \beq\label{Phi1} d\bl \Phi\sta \omega\br=\bl\sum l_i
\varphi\sta f_i\cdot \psi_i  \br_t\, dt\wedge ds= (\varphi\sta
F)_t\, dt\wedge ds. \eeq (We use the subscript $t$ to denote
$\frac{\partial}{\partial t}$; same with $x_k$.) On the other
hand, using $d\omega=\sum_{ j>k} \bl( f_j)_{x_k}-(f_k)_{x_j})\br\,
dx_k \wedge dx_j$, we obtain \beq\label{Phi2} d\bl \Phi\sta
\omega\br=\Phi\sta d\omega=\sum_{ j>k}\bl \Phi\sta
(f_j)_{x_k}-\Phi\sta (f_k)_{x_j}\br\, d x_k(t,s)\wedge d x_j(t,s).
\eeq Using \eqref{uv}, we calculate
$$dx_i(t,s)\wedge dx_j(t,s)=e^{(l_i+l_j)s}(l_j\psi_i'\cdot \psi_j-l_i\psi_{i}\cdot\psi_{j}')\, dt\wedge ds.
$$
%$$dx_{n+i}(t,s)\wedge dx_{n+i'}(t,s)=e^{2v}(\psi_{n+i}'\psi_{n+i'}-\psi_{n+i}\psi_{n+i'}')\, dt\wedge ds;$$
%$$dx_i(t,s)\wedge dx_{n+i'}(t,s)=(\psi_i'\psi_{n+i'}+\psi_i\psi_{n+i'}')\, dt\wedge ds;$$
Applying the $\CC^*$-equivariance, \eqref{Phi2} reduces to
\beq\label{Phi3} d\bl \Phi\sta \omega\br=\sum_{ j>k}\bl
\varphi\sta (f_j)_{x_k}-\varphi\sta (f_k)_{x_j}\br\bl
l_j\psi_i'\cdot \psi_j-l_i\psi_{i}\cdot\psi_{j}'\br\, dt\wedge ds.
\eeq

Since $\omega$ is almost closed, $(f_j)_{x_k}- (f_k)_{x_j}\in I$ for
all $j,k$. We then use \eqref{ab} to conclude that
$$\bl \varphi\sta (f_j)_{x_k}-\varphi\sta (f_k)_{x_j}\br\,
(l_j\psi_i'\cdot \psi_j-l_i\psi_{i}\cdot\psi_{j}')\in (t^{a+2b-1}).
$$
Thus equating \eqref{Phi1} and \eqref{Phi3}, we conclude
$(\varphi\sta F)_t\in (t^{a+2b-1})$. Since $\varphi\sta F(0)=0$,
$\varphi\sta F\in (t^{a+2b})$. Since $b\ge 1$, this proves
$\varphi\sta F\in (t^{a+b+1})$. This proves the Lemma.
\end{proof}

\begin{proof}[Proof of Lemma \ref{ob-surj}]
As before, we only need to prove the case where
$A=\kk[\![u]\!]$, $u=(u_1,\cdots,u_m)$.
We verify the criterion. Let
$\bar\varphi: \spec \kk[\![ t]\!]\to \bar Y$, $\bar\varphi(0)=\bar
o\in \bar X$ be any morphism. We will show that
$\bar\varphi\sta(\bar\si\circ\bar \omega)\in t\cdot \bar\varphi\sta
\bar I$. Clearly, we only need to check the case where $\bar o\in
E$.

For the moment, we assume that $\bar\varphi$ does not factor
through $E\sub \bar Y$. Since $\bar Y$ is the blow-up of $Y$ along
$x_1=\cdots=x_m=0$, canonically $\bar Y\sub Y\times\PP^{m-1}$, and
we can choose homogeneous coordinates of $\PP^{m-1}$ so that $\bar
Y$ is defined by $\frac{x_i}{w_i}=\frac{x_j}{w_j}$, for all $i\ne
j$. By reordering the indices of $x_i$, we can assume $\bar o\in
\{w_1\ne 0\}$.

Let $\varphi=\bar\pi\circ\bar\varphi: \spec [\![t]\!]\to Y$. Like
before, we denote $\phi_i(t)=\varphi\sta u_i$ and
$\psi_j(t)=\varphi\sta x_j$. Since $\varphi$ does not factor
through $E$, $\psi_1(t)\ne 0$; $\bar o\in \{w_1\ne 0\}$ implies
that as ideals, \beq\label{ideal}
(t^b)=(\{\psi_i(t)\}_1^n)=(\psi_1(t)). \eeq Then, following the
definition, we have
$$\bar\varphi\sta \bar I=(\{\varphi\sta g_i\}_1^n, \{ \psi_1(t)\upmo\varphi\sta f_j\}_1^m),
$$
and
$$\bar\varphi\sta(\bar\si\circ\bar \omega)=\psi_1(t)\upmo\sum l_i\varphi\sta x_i\cdot \psi_1(t)\upmo \varphi\sta f_i
=\psi_1(t)^{-2}\sum l_i\varphi\sta x_i\cdot \varphi\sta f_i.
$$
By the definition of $a$, we also have $(\psi_1(t)\upmo \cdot
t^a)\sub \bar\varphi\sta \bar I$; by Lemma \ref{sublem} and
\eqref{ideal}, we have \beq\label{sup} \sum l_i\varphi\sta x_i\cdot
\varphi\sta f_i\in t(\psi_1(t)\cdot t^a). \eeq This implies
$\bar\varphi\sta(\bar\si\circ\bar\omega)\in t\cdot \bar\varphi\sta
\bar I$.

It remains to verify the case when $\bar\phi$ factors through $E\sub
\bar Y$. In this case, we can find $\bar\varphi_1$ so that
$\bar\varphi-\bar\varphi_1\in (t^k)$ for a sufficiently large $k$
and $\bar\varphi_1$ does not factors through $E$. Then by what was
proved, $\bar\varphi_1\sta(\bar\si\circ\bar\omega)\in t\cdot
\bar\varphi_1\sta \bar I$. Since $k$ is sufficiently large, we
conclude $\bar\varphi\sta(\bar\si\circ\bar\omega)\in t\cdot
\bar\varphi\sta \bar I$. This proves the Lemma.
\end{proof}

%\begin{rema}\emph{
%The discussion above also works if we replace $\ti X=\bar X\times_Y\bar Y$ by the total transform
%$X\times_Y \ti Y$. However, such choice depends on the
%global equivariant embedding $X\sub Y$. The $T$-blow-up $\bar X$
%constructed is indeed independent of the choice of the embedding
%$X\sub Y$. Therefore, the discussion of the obstruction theory of
%$\bar X$ in \S\ref{section-redobth} potentially can be used to
%study similar problems without assuming the global embedding
%$X\sub Y$. }\end{rema}

\subsection{The obstruction theory of $\bar X$--global theory}

We begin with the following situation.
%now comment on the case when a global embedding $X\sub Y$ is not present.

We suppose there is an \'etale affine altas $X\lalp\to X$, and two-term complexes of locally
free sheaves $V\lalp\bul\to \LL_{X\lalp}\gmo$ giving the symmetric obstruction theories of
$X\lalp$. (We will ignore the compatibility condition for the moment.)

%Indeed, the minimum they give the cohomological symmetric obstruction theory
%of $X$. Since the theory is not developed fully, we will not dwell on the details here.)

We let $U\lalp$ (resp. $\hat X$) be the formal completion of $X\lalp$ (resp. $X$)
along $X\lalp\times_X X^{T}$ (resp. $X^{T}$).
%We then cover $\hat X$ by affine  \'etale $U\lalp\to\hat X$.
Since the ${T}$-action on $X$ is representable, each $U\lalp$ is a ${T}$-scheme
and the tautological $U\lalp\to\hat X$ is ${T}$-equivariant.

We then pick smooth affine ${T}$-schemes $Y\lalp$ and
${T}$-equivariant embeddings $U\lalp\to Y\lalp$. We let $\hat
V\lalp\bul=[\hat \cV\lalp\to\hat
\cV\lalp\dual]\to\LL_{U\lalp}\gmo$ be the pull-back to $U\lalp$ of
$V\lalp\bul\to\LL_{X\lalp}\gmo$. Since
$V\lalp\bul\to\LL_{X\lalp}\gmo$ are symmetric obstruction
theories, by \cite{Beh-symm} and that $T$ is reductive, there are
$T$-invariant almost closed 1-forms $\omega\lalp\in\Gamma(\Omega_{Y\lalp})$ so
that $U\lalp=(\omega\lalp=0)$, and for
$\cW\lalp=T_{Y\lalp}|_{U\lalp}$ there are quasi-isomorphisms that
make the following squares commutative \beq\label{diag3}
\begin{CD} \hat V\lalp\bul @>{\cong_{q.i}}>> [\cW\lalp\mapright{d\omega\lalp\dual} \cW\lalp\dual]\\
@VVV @VVV\\
\LL_{U\lalp} \gmo@= \LL_{U\lalp}\gmo.
\end{CD}
\eeq Here the right vertical arrow is the obstruction theory of
$U\lalp$ induced from $\omega\lalp=0$. By shrinking $X\lalp$ and
altering the dimensions of $Y\lalp$ if necessary, we can assume
the (top line) quasi-isomorphism is an isomorphism.

We let $\bar U\lalp\sub \bar Y\lalp$ be the pair of
${T}$-intrinsic blow-up of $U\lalp\sub Y\lalp$, let $\pi\lalp:
\bar U\lalp\to U\lalp$ be the projection. Following the convention
in the previous subsection, we let $\bar   E\lalp\sub \bar Y\lalp$
be the exceptional divisor, let
$N\lalp=N_{Y\lalp^{T}\!\!/Y\lalp}=\cW\lalp|_{Y\lalp^{T}}\umv$ and let
$\ell\lalp\sub \pi\lalp\sta N\lalp$ be the tautological subline
bundle. We form
$$\bar\cW_{0,\alpha}=\ker\{ \pi\lalp\sta\cW\lalp\to (\pi\lalp\sta N\lalp)/\ell\lalp\}\dual \and
\bar\cW_{-1,\alpha}=\ker\{\pi\lalp\sta\cW\lalp\dual\to \pi\lalp\sta(\cW\lalp\dual|_{Y\lalp^{T}}\umv)\}\dual.
$$
(Note $\bar\cW_{0,\alpha}=T_{\bar Y\lalp}|_{\bar U\lalp}$.)
The induced perfect obstruction theory of $\bar U\lalp$ is given by
\beq\label{loc-ob}
[\bar\cW_{-1,\alpha}\mapright{d\bar\omega\lalp\dual}\bar\cW_{0,\alpha}]\lra \LL_{\bar U\lalp}\gmo.
\eeq

Using the isomorphisms at the top line of \eqref{diag3}, we can glue \eqref{loc-ob} with the
restriction to $\bar X\lalp-\bar E\lalp$ of the top line of \eqref{diag3} to obtain a new complex with an arrow over
$\bar X\lalp$:
\beq\label{U-ob}
\bar V\bul\lalp=[\bar \cV_{-1,\alpha}\to \bar\cV_{0,\alpha}]\lra \LL_{\bar X\lalp}\gmo.
\eeq
This arrow gives the induced perfect obstruction theory of $\bar X\lalp$.

In order that the collection $V\lalp\bul\to\LL_{X\lalp}\gmo$
gives the symmetric obstruction theory of $X$, it must satisfy
certain compatibility condition. Since the construction of the
induced perfect obstruction theory on $\bar X\lalp$ is canonical,
its compatibility largely follows from the compatibility of $V\lalp\bul\to\LL\gmo_{X\lalp}$.
For instance, if $V\lalp\bul\to\LL_{X\lalp}\gmo$ are restrictions of the symmetric obstruction theory
$V\bul=[\cV\to \cV\dual]\to \LL_X\gmo$ of $X$, then $\bar V\lalp\bul$ 
constructed in \eqref{U-ob} patch to form a global complex $\bar
V\bul$ on $\bar X$. However, the arrows $\bar V\bul|_{\bar
X\lalp}\to \LL_{\bar X\lalp}\gmo$ given in \eqref{U-ob} may not
coincide, the reason being that the presentation $\LL_{\bar
X\lalp}\gmo$ uses the auxiliary embeddings $U\lalp\to Y\lalp$.

We comment that this construction yields a {\sl local symmetric obstruction theory}
to be formulated in \cite{Chang-Li}.

\begin{prop}\label{prop3.7}
Let $X$ be a ${T}$-equivariant \DM stack with a ${T}$-equivariant
symmetric obstruction theory. Suppose $X$ has a ${T}$-equivariant
embedding $X\to Y$ into a smooth DM-stack. Then the
${T}$-intrinsic blow-up $\bar X$ has an induced obstruction
theory.
\end{prop}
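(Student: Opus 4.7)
The plan is to combine the global embedding $X\hookrightarrow Y$ with the local construction of Section 3.2 and verify by \'etale descent that the resulting data glue to a global obstruction theory on $\bar X$.

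First, since $Y$ is smooth and ${T}$-equivariant, the ordinary blow-up $\bar Y$ of $Y$ along $Y^{T}$ coincides with its ${T}$-intrinsic blow-up, and $\bar X$ sits inside $\bar Y$ globally as the subscheme cut out by the ideal \eqref{I-bar}. Using only the smooth structure of $\bar Y$ and its exceptional divisor $\bar E$, I can define globally the two-term complex $\bar V\bul=[\bar\cV_{-1}\to \bar\cV_0]$ on $\bar X$, with $\bar\cV_0=\Omega_{\bar Y}|_{\bar X}$ and $\bar\cV_{-1}$ the dual of \eqref{eq3.09}; neither depends on any choice of local potential.

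Next, pick an affine ${T}$-equivariant \'etale atlas $\{Y\lalp\to Y\}$ and set $X\lalp=X\times_Y Y\lalp$. Since ${T}$ is reductive, Behrend's theorem provides a ${T}$-invariant almost closed $1$-form $\omega\lalp\in\Gamma(\Omega_{Y\lalp})$ with $X\lalp=(\omega\lalp=0)$, inducing the given symmetric obstruction theory on $X\lalp$. The construction of Section 3.2 then produces $\bar\omega\lalp$ via \eqref{eq-2010-1} and an arrow $\bar V\bul|_{\bar X\lalp}\to \LL\gmo_{\bar X\lalp}$ given by $d\bar\omega\lalp\dual$.

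The main step is to check that these local arrows glue. On an overlap $Y_{\alpha\beta}=Y\lalp\times_Y Y_\beta$, the remark following \eqref{X-symm} gives $\omega\lalp-\omega_\beta\in I_{X_{\alpha\beta}}^2\cdot\Omega_{Y_{\alpha\beta}}$. I claim this implies
\[
\bar\omega\lalp-\bar\omega_\beta\in I_{\bar X_{\alpha\beta}}^2\cdot \bar\cV_{-1}\dual|_{\bar X_{\alpha\beta}},
\]
which forces $d\bar\omega\lalp\dual=d\bar\omega_\beta\dual$ on $\bar X_{\alpha\beta}$, so the local arrows patch to a global morphism $\bar V\bul\to \LL\gmo_{\bar X}$. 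To prove the claim, decompose $\omega\lalp-\omega_\beta=\eta^{\mathrm{fix}}+\eta^{\mathrm{mv}}$ into fixed and moving parts. Using that by \eqref{I-bar} one has $\bar\pi\sta a\in I_{\bar X}$ for $a\in I_X^{{T}}$ and $\xi\upmo\bar\pi\sta b\in I_{\bar X}$ for $b\in I_X^{\mathrm{mv}}$, a case-by-case analysis on the weight decomposition of $I_X^2$ shows $\bar\pi\sta\eta^{\mathrm{fix}}\in I_{\bar X}^2\cdot\bar\pi\sta\Omega_Y^{\mathrm{fix}}$, and each moving coefficient $\xi\upmo\bar\pi\sta h_j$ (where $\eta^{\mathrm{mv}}=\sum h_j\,dx_j$) lies in $I_{\bar X}^2$. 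That the resulting global arrow is a perfect obstruction theory is a local statement, verified by \eqref{eq3.2}.

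The main obstacle is precisely the weight-decomposition bookkeeping in the last step, especially the subcase of products $\phi^{\mathrm{mv}}\cdot g^{\mathrm{mv}}\in I_X^2$ whose weights cancel, where division by $\xi$ must be tracked carefully (the output lies in $\xi\cdot I_{\bar X}^2\subset I_{\bar X}^2$, which still suffices). Once this compatibility is in hand, the remainder of the argument is standard \'etale descent.
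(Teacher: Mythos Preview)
Your approach is essentially the same as the paper's: define the sheaves $\bar\cV_{-1},\bar\cV_0$ globally from the embedding $\bar X\subset\bar Y$, produce the arrow to $\LL_{\bar X}^{\ge-1}$ locally from almost closed $1$-forms $\omega\lalp$, and glue by checking independence of the choice of $\omega\lalp$. The paper is terser---it simply asserts that the arrows are independent of the choice of $\omega\lalp$ and hence patch---whereas you spell out the weight-by-weight verification that $\bar\omega\lalp-\bar\omega_\beta\in I_{\bar X}^2\cdot\bar\cV_{-1}\dual$, which is a welcome addition.

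One small imprecision: you say the \emph{complex} $\bar V\bul=[\bar\cV_{-1}\to\bar\cV_0]$ is defined globally ``using only the smooth structure of $\bar Y$ and its exceptional divisor.'' That is true of the two sheaves, but the differential $d\bar\omega\dual$ between them is not determined by the smooth structure alone---it requires the obstruction-theory data. This is not a real gap, since your $I_{\bar X}^2$ argument also shows $d\bar\omega\lalp|_{\bar X}=d\bar\omega_\beta|_{\bar X}$ (differentiating $\bar\omega\lalp-\bar\omega_\beta\in I_{\bar X}^2$ lands in $I_{\bar X}\cdot\Omega_{\bar Y}$), so the differential glues as well; but the wording should be adjusted. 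Also, your description of the ``weights cancel'' subcase is slightly garbled: when two moving factors have cancelling weights the product sits in the \emph{fixed} part of $I_X^2$, so no division by $\xi$ is needed there, and one gets $\xi^2 I_{\bar X}^2$ rather than $\xi I_{\bar X}^2$; either way the conclusion holds.
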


\begin{proof}
Let $X\to Y$ be the ${T}$-equivariant embedding into a smooth DM-stack.
By Lemma \ref{p2.8},
$X$ has a ${T}$-equivariant symmetric
obstruction theory
\beq\label{VV}
V\bul=[\cV\to\cV\dual]\to\LL\gmo_X=[I_X/I_X^2\to \Omega_Y|_X], \quad \cV=T_Y|_X.
\eeq

We let $\bar X\sub \bar Y$ be the ${T}$-intrinsic blow-up of the pair $X\sub Y$,
let $\pi: \bar X\to X$ be the projection, and let $\bar E\sub \bar Y$ be the exceptional divisor.
%and $\ell\sub \bar\pi\sta N_{Y^{T}\!\!/Y}=\bar\pi\sta \cV|_{X^{T}}\umv$ be the tautolocal subline
%bundle.
%We introduce
%$$\bar\cV_0=\ker\{ \bar\pi\sta\cV\to \bar\pi\sta (\cV|_{X^{T}}\umv)/\ell\}\dual \and
%\bar\cV_{-1}=\ker\{ \bar\pi\sta \cV\dual\to \bar\pi\sta(\cV\dual|_{X^{T}}\umv)\}\dual,
%$$
%and form
We let $\bar \cV_{-1}$ and $\bar\cV_0$ be the two
locally free sheaves on $\bar X$ given in \eqref{eq3.09} and \eqref{V0}.

We cover $X$ by affine \'etale atlas $X\lalp\to X$. Let $\bar X
\lalp$ be the $\csta$-intrinsic blow-up. Over $\bar X\lalp$, the
pull-back of $\bar\cV_{-1}$ and $\bar\cV_0$ are the sheaves
$\bar\cV_{-1,\alpha}$ and $\bar\cV_{0,\alpha}$ mentioned in
\eqref{U-ob}. Let $\bar V\bul\lalp\to\LL_{\bar X\lalp}\gmo$ be as in
\eqref{U-ob}.

As argued, the arrows in \eqref{U-ob} are independent of the choice
of an almost closed 1-form $\omega\lalp$; thus the collection
$\bar V\lalp\to \LL_{\bar X\lalp}\gmo$ patch to form a global
complex and an arrow \beq\label{g-V} \bar
V\bul=[\bar\cV_{-1}\to\bar\cV_0]\lra \LL_{\bar X}\gmo=[ I_{\bar
X}/I_{\bar X}^2\to \Omega_{\bar Y}|_{\bar X}]. \eeq Also, because
the construction is canonical, it is $T$-equivariant. This proves
the Lemma.
\end{proof}

Let $\bar X^{T,k}\sub \bar Y^{T,k}$ ($\sub \bar Y^T$) be the
decomposition of the fixed locus shown in \eqref{decomp-X}.
(Because the inclusion $X\sub Y$ is a global $T$-embedding, the
local constructions patch to form a global decomposition.) Let
$N_{(k)}$ be the weight $k$ component of $N_{Y^T/Y}$, and let
$\PP_X N_{(k)}=\PP N_{(k)}\times_{\bar Y^T}\bar X^T$ be as defined
in the previous subsection. We form complex $F\bul_{(k)}$ and
sheaf $E_{(k)}$, as in \eqref{3} and \eqref{4}. By the remark after
the diagram \eqref{diag-dt}, we have the following Lemma.

\begin{lemm}
The constructions preceding \eqref{diag-dt} patch to give an
obstruction theory of $\PP_X N_{(k)}$ and a relative obstruction
theory of $\bar X^{T,k}/\PP_X N_{(k)}$. They fit into the
commutative diagram of distinguished triangles shown in
\eqref{diag-dt}.
\end{lemm}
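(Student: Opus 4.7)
The plan is to leverage the global embedding $X\hookrightarrow Y$ that is already in force from Proposition \ref{prop3.7}, together with the independence-of-$\omega$ observation recorded just after \eqref{diag-dt}, to promote the local constructions of \S3.2 to global ones.

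First, I would note that the decomposition $\bar X^T=\coprod_k \bar X^{T,k}$ and the subschemes $\PP_X N_{(k)}=\PP N_{(k)}\times_{Y^T}X^T$ are \emph{a priori} global: since $X\subset Y$ is a $T$-equivariant embedding into a smooth DM stack, the weight decomposition of the normal bundle $N=N_{Y^T/Y}=\bigoplus_k N_{(k)}$ is global, the blow-up $\bar Y\to Y$ along $Y^T$ is global, and the decomposition \eqref{decomp-X} is intrinsic to $\bar Y^T=\coprod_k\PP N_{(k)}$. Thus there is no issue patching the underlying stacks; only the obstruction-theoretic data on each stratum needs to be globalized.

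Second, on the étale atlas $\{X_\alpha\to X\}$ chosen in the proof of Proposition \ref{prop3.7}, with its local almost-closed 1-forms $\omega_\alpha\in\Gamma(\Omega_{Y_\alpha})$ defining the symmetric obstruction theory of $X_\alpha\subset Y_\alpha$, the local construction of \S3.2 yields, for every $k$, a two-term complex $F^\bullet_{(k),\alpha}\to \LL_{\PP_{X_\alpha}N_{(k)}}$ (encoding the perfect obstruction theory of $\PP_X N_{(k)}$ restricted to $X_\alpha$), a sheaf morphism $E_{(k),\alpha}[-1]\to \LL_{\bar X_\alpha^{T,k}/\PP_{X_\alpha}N_{(k)}}$ (the relative obstruction theory), and the commutative diagram of distinguished triangles \eqref{diag-dt} assembled out of $\bar\alpha_0$ and $\bar\beta_{-k,j}$. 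I would then invoke the remark following \eqref{incl}: any two choices $\omega_\alpha,\omega'_\alpha$ of almost-closed $1$-form defining the same symmetric obstruction theory differ by an element of $I_{X_\alpha}^2\cdot \Omega_{Y_\alpha}$, which forces $\bar\alpha_0-\bar\alpha'_0\in(\bar\alpha_0)^2$ and $\bar\beta_{-k,j}-\bar\beta'_{-k,j}\in(\{\bar\beta_{-k,l}\})^2$. Hence the arrows in each of the local diagrams \eqref{diag-dt} are intrinsic to the restriction of $(X\subset Y,\omega_\alpha)$ and do not depend on the presentation chosen on each chart.

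Third, on an overlap $X_{\alpha\beta}=X_\alpha\times_X X_\beta$, the étale change of coordinates intertwines $\omega_\alpha$ and $\omega_\beta$ up to $I^2\Omega$-ambiguity, so the previous paragraph gives canonical identifications of $F^\bullet_{(k),\alpha}|_{X_{\alpha\beta}}$ with $F^\bullet_{(k),\beta}|_{X_{\alpha\beta}}$ and of $E_{(k),\alpha}|_{X_{\alpha\beta}}$ with $E_{(k),\beta}|_{X_{\alpha\beta}}$, compatible with the arrows to the respective cotangent complexes and with the connecting maps of \eqref{diag-dt}. Étale descent then produces a global complex $F^\bullet_{(k)}\to \LL_{\PP_X N_{(k)}}$, a global relative obstruction sheaf $E_{(k)}[-1]\to \LL_{\bar X^{T,k}/\PP_X N_{(k)}}$, and the global commutative diagram of distinguished triangles as in \eqref{diag-dt}.

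The main obstacle is the compatibility check on overlaps: one must confirm that the cocycle-level differences of the local forms $\omega_\alpha$, once passed through the $\bar{\phantom{x}}$-operation (substitution $\blup_i$ and division by the exceptional parameter $\xi$), still land in the appropriate ideals so that $\bar\alpha_0$ and $\bar\beta_{-k,j}$ are well defined up to the ambiguities described in \eqref{incl}. This is essentially a direct consequence of the calculations already carried out in \S3.2 (in particular the argument establishing \eqref{incl}), applied fibrewise along the étale atlas, but it must be written out carefully to verify that the quasi-isomorphism class of $F^\bullet_{(k),\alpha}$ and the morphism $E_{(k),\alpha}$ assemble into a \emph{coherent} global datum rather than merely isomorphic local data.
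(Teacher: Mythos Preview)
Your proposal is correct and follows essentially the same approach as the paper: the paper's entire proof is the sentence preceding the Lemma, ``By the remark after the diagram \eqref{diag-dt}, we have the following Lemma,'' which is precisely the independence-of-$\omega$ observation (culminating in \eqref{incl}) that you invoke as the key input. You have simply spelled out in more detail what the patching and descent argument entails.
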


By Lemma \ref{ob-surj}, we obtain the following.

\begin{coro}\label{reduction} Let the situation be as in Proposition \ref{prop3.7}.
Then the virtual normal cone $C_{\bar X}\sub \bar\cV_{-1}^\vee$ of
$\bar X$ lies entirely in the kernel of $\bar\cV_{-1}^\vee\to
\cO_{\bar X}(-2\bar E)$.
\end{coro}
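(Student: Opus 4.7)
The statement is local on $\bar X$, so the plan is to reduce it to the affine local situation already handled by Lemma \ref{ob-surj}. First I would recall the setup from the proof of Proposition \ref{prop3.7}: there is an \'etale affine atlas $X\lalp\to X$, and over each $X\lalp$ the symmetric obstruction theory $V\bul\to\LL_X\gmo$ is presented by an almost closed $T$-invariant $1$-form $\omega\lalp\in\Gamma(\Omega_{Y\lalp})$ on a smooth $T$-equivariant ambient $Y\lalp$ with $U\lalp=(\omega\lalp=0)$. After taking $T$-intrinsic blow-ups we obtain $\bar U\lalp\sub\bar Y\lalp$ cut out by the transformed equation $\bar\omega\lalp$, and by \eqref{eq3.2} this furnishes the local obstruction theory $\bar V\bul\lalp=[\bar\cV_{-1,\alpha}\to\bar\cV_{0,\alpha}]\to\LL\gmo_{\bar X\lalp}$ whose pieces patch to the global $\bar V\bul$ of \eqref{g-V}.

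Next I would identify the virtual normal cone. Since $\bar Y\lalp$ is smooth, the local virtual normal cone of $\bar U\lalp$ is just the ordinary normal cone $C_{\bar U\lalp/\bar Y\lalp}\sub\bar\cV_{-1,\alpha}\dual|_{\bar U\lalp}$ sitting inside the ambient smooth scheme. As the construction of $\bar V\bul\lalp$ is canonical and independent of the choice of $\omega\lalp$ (the differences lie in $I_{U\lalp}^2\cdot\Omega_{Y\lalp}$, cf.\ the discussion after \eqref{incl}), these local normal cones glue along $\bar X\lalp\times_{\bar X}\bar X_\beta$ to give the global virtual normal cone $C_{\bar X}\sub\bar\cV_{-1}\dual$.

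Parallel to this, I would assemble the surjection $\bar\si$. Locally over $\bar X\lalp$, the $T$-action on $Y\lalp$ produces the surjection $\bar\si\lalp:\bar\cV_{-1,\alpha}\dual\to\cO_{\bar Y\lalp}(-2\bar E\lalp)$ of \eqref{surj}. Because the $T$-action on $X$ (hence on $\bar X$) is intrinsic, the line bundles $\cO_{\bar Y\lalp}(-2\bar E\lalp)|_{\bar X\lalp}$ patch to $\cO_{\bar X}(-2\bar E)$, and the $\bar\si\lalp$ patch to the global surjection $\bar\si:\bar\cV_{-1}\dual\to\cO_{\bar X}(-2\bar E)$.

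Finally, on each chart Lemma \ref{ob-surj} applies verbatim to $U\lalp\sub Y\lalp$ with $\omega\lalp$, yielding
\[
C_{\bar U\lalp/\bar Y\lalp}\sub\ker\{\bar\si\lalp\}\sub\bar\cV_{-1,\alpha}\dual|_{\bar U\lalp}.
\]
Since containment of a closed cone in a closed subbundle is an \'etale-local property, patching these inclusions over the cover $\{\bar X\lalp\}$ gives $C_{\bar X}\sub\ker\{\bar\si\}$, which is the desired conclusion. The main (and only) subtlety in this plan is verifying that the local normal cones and the local $\bar\si\lalp$ are genuinely restrictions of global objects; this is where the canonicity of the $T$-intrinsic blow-up construction and of the perfect obstruction theory built in Proposition \ref{prop3.7} is essential, and it is handled by the independence of the embedding $U\lalp\sub Y\lalp$ and of the choice of $\omega\lalp$ already established.
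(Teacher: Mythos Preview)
Your proposal is correct and follows the paper's own approach: the paper simply states that the corollary is obtained from Lemma \ref{ob-surj}, and your argument is a careful unpacking of precisely why the local statement of that lemma globalizes via the \'etale atlas and the canonicity of the constructions in Proposition \ref{prop3.7}. The added discussion of patching the local cones $C_{\bar U\lalp/\bar Y\lalp}$ and the surjections $\bar\si\lalp$ is the natural elaboration the paper leaves implicit.
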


 % and the virtual fundamental
%class of $\bar X$ is given by \beq\label{eq3.6-1} [\bar
%X]\virt=0_F^![C_{\bar X}]\cap c_1(\cO_{\bar X}(-2E)). \eeq
%Let us
%define
%\[
%[\bar X]\virt_{\mathrm{red}}=0_F^![C_{\bar X}].
%\]

%and the complex \eqref{eq3.5}
%is a perfect obstruction theory of $\tilde X$ whose virtual
%dimension is $1$. We call it the \emph{reduced obstruction theory}
%of $\tilde X$.

%Now let's assume $M_\pm$ are proper $\CC^*$-flips in the Artin stack
%$\cM=[X/T]$ that is equipped with a strongly symmetric
%obstruction theory. On $\tilde{Y}-E$, the $T$-action induces an
%injective homomorphism $\cO_{\tilde{Y}-E}\to
%T\tilde{Y}|_{\tilde{Y}-E}$ of vector bundles whose cokernel is the
%pullback of the tangent bundle of the quotient $(\tilde{Y}-E)/T$.
%Likewise, the kernel of \eqref{surj} over $\tilde{Y}-E$ gives us
%the cotangent bundle of the quotient $(\tilde{Y}-E)/T$. In
%particular, the perfect obstruction theory \eqref{eq3.5} induces
%symmetric obstruction theories on $M_\pm$ in the same way as in
%\S\ref{sec2.2}.

%%%%%%%%%%%%%%%%%%%%%%%%%%%%%%%%%%%%%%%%%%%%%%%%%%%%
%%%%%

\black

\section{The master space}

In this section we define the master space of a simple flip
$$M_\pm =[X_\pm/\CC^*]\subset \cM=[X/\CC^*]$$ and prove that it
is a proper separated \DM stack. We also define and study the
master space of the $\CC\sta$-intrinsic blow-up $\bar X$. This master space will be the
main tool for our wall crossing formula in the subsequent section.

\subsection{Master space} \label{section-master}

Let $M_\pm=[X_\pm/T]\sub\cM$ be the simple flip defined in Definition \ref{def-master1}.
%To construct the master space, we need to introduce a new
%$\CC\sta$-action. To distinguish these two $\CC\sta$-actions, we let $T=H=\CC\sta$,
%and use $T$ to denote the given $\CC\sta$-action on $X$.
%The role of $H$ will be introduced later.
%In the following, we will construct an $H$-equivariant \DM stack $Z$
%that contains $M_+$ and $M_-$ as fixed point components. Theorem
%\ref{thm-1} will be obtained by
%\red Let $X\hookrightarrow Y$ be an equivariant embedding into a smooth
%\DM stack. [Why do we introduce this embedding here?] \black
%By using the square map $z\mapsto z^2$, $\CC^*\to
%\CC^*$, we may assume that all the weights of the $T$-action on
%the normal spaces at all fixed points of $Y$ are \emph{even}.
We consider $X\times\Po$ with the $T$-action
$$\sigma\cdot (w,[s_0,s_1])=(\sigma\cdot w,[\sigma\cdot s_0, s_1]).
$$
%We let $e_0: X\to X\times \Po$
%be the section given by $X\times\{0\}$, where, and let $e_\infty:X\to X\times\Po$
%be the section $X\times\{\infty\}$.
We pick a $T$-invariant open subset
$$(X\times \Po)^s=X\times \Po-\Sigma_-\times\{0\}-\Sigma_+\times\{\infty\},
$$
where $0= [0,1]$ and $\infty=[1,0]$; we then form the quotient
\begin{equation}\label{cZ}
Z=(X\times \Po)^s/T.
\end{equation}
Obviously, $(X\times \Po)^s$ contains both $X_+\times\{0\}$ and
$X_-\times\{\infty\}$ as closed substacks.

\begin{defi} We call $Z$ the \emph{master space} for
$\cM$. \end{defi}

We will see below that $Z$ is a proper separated
$\CC\sta$-equivariant \DM stack.

\begin{exam}\label{ex2.7}
For the $X$ in Example \ref{ex2}, %. Consider the $T$-action on $\PP V
%\times \PP^1$ by $t\cdot (w,[s_0,s_1])=(t^2\cdot
%w,[ts_0,t^{-1}s_1])$. Then it is an easy exercise of the
%Hilbert-Mumford criterion that the stable locus of the product
%$\PP V\times \PP^1$ with respect to $\cO_{\PP V}(1)\boxtimes
%\cO_{\PP^1}(\epsilon)$, where $0<\epsilon<<1$, coincides with
%$(X\times \PP^1)^s$, and hence the master space is the GIT
%quotient
%\[ Z=\PP V\times \PP^1/\!/_{\cO(1,\epsilon)}T. \]
%In particular, the coarse moduli space of the master space is
%projective. In fact,
the master space $Z$ is simply the blow-up of $\PP V$ along $\PP
V_+\cup \PP V_-$.
\end{exam}

%\begin{exam}
%In the more general set-up of in Example \ref{ex2}, the properness
%and separatedness of the master space $Z$ follow from an argument
%of Simpson in \cite[Theorem 11.1]{Simp} if we use the properness
%of (twisted) stable map space to the \DM stack $W$ in
%\cite[Theorem 1.4.1]{AV}.
%\end{exam}
%
%\red

We intend to show that the master space $Z$ is proper. In the
discussion below, we will use $R$ to denote a discrete valuation
ring over $\CC$ with fractional field $K$; denote by $\zeta$ its
uniformizing parameter, and denote by $\xi$ and $\xi_0$ its
generic and closed points. 
%We also need to work with a finite
%extension $\tilde R\supset  R$; for $\tilde R$ we denote by
%$\tilde K$ and $\tilde \xi_0$ (resp. $\tilde \xi$) its field of
%fractions and closed point (resp. generic point). 
Also, for an
$f:\spec K\to X$ and $g:\spec K\to {{T}}$, we denote by $g\cdot f$
the composite
$$\spec R\mapright{(g,f)} {{T}}\times X \lra X,
$$
where the second arrow is the group action morphism.

\begin{lemm}\label{lem2.8}
The master space $Z$ is a proper separated \DM stack.
\end{lemm}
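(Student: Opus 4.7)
The plan is to verify three properties of $Z$: that it is a DM stack, that it is separated, and that it is proper. The DM property follows from unwinding the definitions: the $T$-action on $\Po$ is free outside $\{0,\infty\}$, and at these two $T$-fixed points the removal of $\Sigma_-\times\{0\}$ and $\Sigma_+\times\{\infty\}$ excludes exactly the locus where the $X$-coordinate lies in $X^T$. Combined with the representability of the $T$-action on $X$ (cf.\ Proposition \ref{p.2.10}), every stabilizer on $(X\times\Po)^s$ is finite, so $Z=[(X\times\Po)^s/T]$ is a DM stack.

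Separation and properness both reduce to the valuative criterion. Let $R$ be a DVR over $\CC$ with fraction field $K$, uniformizer $\zeta$, and closed point $\xi_0$, and let $\phi:\spec K\to Z$. Lift (\'etale locally and after a finite extension of $R$) to $\tilde\phi=(\tilde\phi_X,\tilde\phi_{\Po}):\spec K\to(X\times\Po)^s$. Since $\Po$ is proper, $\tilde\phi_{\Po}$ extends uniquely to $\spec R\to\Po$, and in the affine chart at $0$ we write it as $[\alpha\zeta^a,1]$. Because $\tilde\phi_X(\xi)$ is a DM point of $\cM$, it lies in $X-X^T=X_+\cup X_-$; without loss of generality $\tilde\phi_X(\xi)\in X_+$. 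Properness of $M_+$ then yields, after a further finite extension of $R$ and a twist $h_+:\spec K\to T$ with $h_+\sta(t)=\beta_+\zeta^{b_+}$, an extension $f_+:\spec R\to X$ of $h_+\cdot\tilde\phi_X$ with $f_+(\xi_0)\in X_+$.

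The pair $(f_+,h_+\cdot\tilde\phi_{\Po})$ gives the required extension in $(X\times\Po)^s$ unless its closed fiber lies in $\Sigma_+\times\{\infty\}$, i.e., unless $a+b_+<0$ and $f_+(\xi_0)\in\sigc_+$. In this exceptional case, the generic fiber $\tilde\phi_X(\xi)$ lies either in $X_+\cap X_-=X_0$ or in $\sigc_+$. In the former we also apply properness of $M_-$ to obtain $f_-$ with $f_-(\xi_0)\in X_-$ via a twist $h_-$ of order $b_-$; if we are still in trouble, $f_-(\xi_0)\in\sigc_-$ and $a+b_->0$, so $g=h_-/h_+$ with $g\sta(t)=\gamma\zeta^c$ satisfies $c=b_--b_+>0$ and $g\cdot f_+=f_-$, and all hypotheses of Simpleness Condition \ref{simpcond}(a) are met. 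The value $b=-(a+b_+)$ lies in $(0,c)\subset(0,\imath(\tilde R)\cdot c)$ precisely because $a+b_+<0<a+b_-$, and condition (a) with this $b$ produces a twist $\tilde h\cdot h_+$ whose $X$-extension has limit in $X^T$ while the $\Po$-component has limit at a finite nonzero point; the closed fiber then lies in $(X\times\Po)^s$. In the latter sub-case the map $f_+$ lies entirely in $\sigc_+$, and Simpleness Condition \ref{simpcond}(b), applied with $g$ of the same chosen positive order $c=-(a+b_+)$, achieves the same conclusion. Separation is obtained by the symmetric argument: any two extensions of $\phi$ differ by a $T$-twist whose order has some sign, and applying the appropriate part of Simpleness Condition \ref{simpcond}, together with separatedness of $M_\pm$, forces this twist to be trivial.

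The main obstacle is the bookkeeping of exponents and ramification indices after each finite extension of $R$, and the verification that the intermediate value $b=-(a+b_+)$ always falls in the admissible range dictated by the Simpleness Condition. The two ``bad'' inequalities $a+b_+<0<a+b_-$ make this automatic, but each case distinction has to be checked separately. All remaining cases (where the closed fiber automatically lies in $(X\times\Po)^s$) follow directly from properness of $\Po$ and $M_\pm$ without recourse to the Simpleness Condition.
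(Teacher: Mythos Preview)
Your approach is essentially the same as the paper's: verify finite stabilizers for the DM property, then use the valuative criterion together with properness of $M_\pm$ and the Simpleness Condition to extend maps. The case analysis and the use of conditions (a) and (b) mirror the paper closely.

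There is, however, one genuine error. You claim that ``because $\tilde\phi_X(\xi)$ is a DM point of $\cM$, it lies in $X-X^T$.'' This is false: the lift $\tilde\phi$ lands in $(X\times\Po)^s$, not in $X$, and the point $\tilde\phi(\xi)=(\tilde\phi_X(\xi),\tilde\phi_{\Po}(\xi))$ has finite $T$-stabilizer whenever $\tilde\phi_{\Po}(\xi)\in\csta$, \emph{even if} $\tilde\phi_X(\xi)\in X^T$. So the stratum $X^T\times\csta\subset(X\times\Po)^s$ is not excluded by your argument, and you have not handled the case $\tilde\phi_X(\xi)\in X^T$. The paper treats this case directly: since $X^T$ is proper (Definition~\ref{def-master1}(i)), one twists by $g=\tilde\phi_{\Po}^{-1}$ to make the $\Po$-component constant in $\csta$, and then properness of $X^T$ extends the $X$-component. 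This is the easiest case, but it does need to be said.

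A smaller difference: the paper dispatches separatedness in one line by observing that all $T$-orbits in $(X\times\Po)^s$ are closed, rather than running the valuative criterion; your sketch via the valuative criterion is plausible but would require essentially the same case analysis as properness to be made rigorous.
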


\begin{proof} Let $\cZ=(X\times\PP^1)^s$ so that $Z=[\cZ/T]$.
It is direct to check that the stabilizer of any closed $z\in \cZ$
is finite. Also, all $T$-orbits of $\cZ$ are closed orbits. Thus
$Z$ is a separated \DM stack.

It remains to prove that $Z$ is proper. Let $R$ be a discrete
valuation ring over $\CC$ with field of fractions $K$, and let $f:
\spec K\to \cZ$ be a morphism. We need to show that after a finite
extension $\tilde R\supset R$ with $\tilde K$ its field of
fractions, there is a morphism $g:\spec \tilde K\to {{T}}$ so that
$g\cdot f: \spec\tilde K\to \cZ$ extends to $(g\cdot f)^{ex}:\spec
\tilde R\to \cZ$.

First, note that $\cZ$ decomposes into the disjoint union
$$\cZ=X_0\times{{T}}\sqcup \Sigma_-^\circ\times{{T}}\sqcup
\Sigma_+^\circ\times{{T}}\sqcup X_+\times\{0\}\sqcup
X_-\times\{\infty\}\sqcup X^T\times{{T}}.
$$
Using $\cZ\sub X\times\Po$, we can write
$$f=(f_1,f_2): \spec K\lra X\times\Po.
$$
Let $\xi$ and $\xi_0$ be the generic and closed point of $\spec R$.

We first consider the case $f(\xi)\in X_+\times\{0\}$. Since by
assumption the quotient $[X_+\times\{0\}/T]=[X_+/T]=M_+$ is proper,
the extension $(g\cdot f)^{ex}$ does exist. The case $f(\xi)\in
X_-\times\{\infty\}$ is similar. For the same reason, if $f(\xi)\in
X^T\times{{T}}$, because $X^T$ is proper, the extension also
exists.

We now suppose $f(\xi)\in X_0\times{{T}}$. Because $M_+=[X_+/T]$
is proper, after a finite extension $\tilde R$ of $R$, we can find
a morphism $g_+:\spec \tilde K\to {{T}}$ so that $g_+\cdot f:
\spec \tilde K\to X_0\times{{T}}$ extends to $(g_+\cdot f)^{ex}:
\spec\tilde R\to X_+\times\Po$. By the same reason, after
replacing $\tilde R$ by a finite extension, still denoted by
$\tilde R$, we can find $g_-: \spec \tilde K\to {{T}}$ so that
$g_-\cdot f$ extends to $(g_-\cdot f)^{ex}: \spec\tilde R\to
X_-\times\Po$. Let $g^{ex}_+$ and $g_-^{ex}: \spec\tilde R\to \Po$
be the extensions of $g_+$ and $g_-$. In case
$g_+^{ex}(\tilde\xi_0)\ne \infty$ or $g^{ex}_-(\tilde\xi_0)\ne 0$,
then either $(g_+\cdot f)^{ex}$ or $(g_-\cdot f)^{ex}$ maps to
$\cZ$ and we are done. Suppose not. Let $g:\spec \tilde K\to{{T}}$
be defined via $g_+\cdot g=g_-$, which gives $g\cdot (g_+\cdot
f)=g_-\cdot f$. Because $g_+^{ex}(\tilde\xi_0)= \infty$ or
$g^{ex}_-(\tilde\xi_0)= 0$, we must have $g\sta(t)=\alpha\cdot
\zeta^a$ with $a>0$. Therefore by Lemma \ref{lem-auxprop} (a),
after possibly another finite extension $R\subset \tilde R$, we
can find $g':\spec \tilde K \to T$ such that
$(g')^*(t)=\zeta^{a'}$ with $0<a'<a$ and that $(g'\cdot g_+\cdot
f)^{ex}(\tilde \xi_0)\in X^T\times{{T}}\subset \cZ$.

Finally, we consider the case $f:\spec K\to
\Sigma_+^\circ\times{{T}}$. (The case to
$\Sigma_-^\circ\times{{T}}$ is similar.) Since
$\Sigma_+^\circ\sub X_+$ is closed and $[X_+/T]$ is proper,
$[\Sigma_+^\circ/T]$ is proper. Thus there is a $g: \spec \tilde
K\to{{T}}$, for a finite extension $\tilde R\supset R$, such
that $g\cdot f$ extends to $(g\cdot f)^{ex}: \spec \tilde R\to
\Sigma_+^\circ\times\Po$. In case $(g\cdot f)^{ex}(\tilde\xi_0)\in
\Sigma_+^\circ\times\CC$, we are done. Otherwise, $(g\cdot
f)\sta(t)=\alpha\cdot \zeta^a$ with $a<0$. Then we let $g':\spec
\tilde K\to {{T}}$ be so that $g^{\prime \ast}(t)=\alpha$. By
Lemma \ref{lem-auxprop} (b), possibly after passing through a new
finite extension $\tilde R$, the extension $(g'\cdot f)^{ex}:
\spec \tilde K\to \Sigma_+\times {{T}}$ exists. This settles the
case.

Combining these, we conclude that the quotient $[Z/T]$ is a proper
separated \DM stack.
\end{proof}

Let $H=\csta$ act on $X\times \PP^1$ by
$$(w,[a_0,a_1])^t=(w,[a_0,ta_1]).
$$
Obviously the action of $H$ commutes with the action of $T$ on
$X\times \PP^1$ and hence $Z$ admits an induced action of $H$. The
following lemma is straightforward.
\begin{lemm}
The $H$-fixed point substack $Z^{H}\sub Z$ is the disjoint union of
$$M_+=X_+/T,\quad M_-=X_-/T, \and X^T. %=\sqcup X_i,
$$
\end{lemm}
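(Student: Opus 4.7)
The plan is to work on the level of the prequotient $\cZ=(X\times\PP^1)^s$ and characterize which $T$-orbits $[(w,[a_0,a_1])]$ are preserved setwise by the $H$-action $t\cdot[a_0,a_1]=[a_0,ta_1]$. Since the $T$- and $H$-actions on $X\times\PP^1$ commute, a class $[(w,[a_0,a_1])]\in Z$ is $H$-fixed if and only if for every $t\in H$ there exists $\sigma\in T$ (depending on $t$) with
\[
\sigma\cdot w = w \quad\text{and}\quad [\sigma\cdot s_0, s_1]=[a_0,ta_1]\ \text{up to the $T$-action, i.e.}\ [\sigma a_0,a_1]=[a_0,ta_1].
\]
I would then split into three cases according to the position of $[a_0,a_1]\in\PP^1$.

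\emph{Case $a_1=0$.} Here $[a_0,a_1]=\infty=[1,0]$, and the point lies in $X\times\{\infty\}$. The definition of $\cZ$ forces $w\in X-\Sigma_+=X_-$, and the whole point $(w,\infty)$ is already $H$-fixed in $X\times\PP^1$. Taking the $T$-quotient gives exactly $M_-=[X_-/T]$. Symmetrically, $a_0=0$ gives the substack $M_+=[X_+/T]$.

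\emph{Case $a_0,a_1\neq 0$.} Here the equation $[\sigma a_0,a_1]=[a_0,ta_1]$ in $\PP^1$ forces $\sigma=t$ (both coordinates are nonzero). Hence $H$-invariance of the $T$-orbit of $(w,[a_0,a_1])$ is equivalent to $t\cdot w=w$ for every $t\in T$, i.e.\ $w\in X^T$. Conversely, any such $w$ trivially gives an $H$-fixed orbit. Inside $\cZ$, the locus of such points is $X^T\times(\PP^1\setminus\{0,\infty\})=X^T\times T$, and its $T$-quotient (with $T$ acting trivially on $X^T$ and by translation on the second factor) is canonically identified with $X^T$.

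Collecting the three cases and observing that they correspond to disjoint subloci of $\PP^1$ (namely $\{\infty\}$, $\{0\}$, and $T$), the three pieces $M_-$, $M_+$, and $X^T$ are pairwise disjoint in $Z$ and together exhaust $Z^H$. The main (minor) obstacle is simply keeping the two commuting $\CC^*$-actions straight and verifying that in the second case the $T$-element $\sigma$ is uniquely forced to equal $t$; once that linear-algebra observation is made the decomposition is immediate.
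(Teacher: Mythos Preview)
Your argument is correct and is precisely the straightforward verification the paper has in mind; the paper in fact omits the proof entirely, stating only that the lemma is straightforward. One small slip: from $[\sigma a_0,a_1]=[a_0,ta_1]$ with $a_0,a_1\ne 0$ you get $\sigma t=1$, so $\sigma=t^{-1}$ rather than $\sigma=t$, but since the condition must hold for all $t\in H$ this does not affect the conclusion that $w\in X^T$.
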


For the obstruction theory, we have

\begin{lemm}
If the quotient stack $\cM=[X/T]$ admits a perfect obstruction
theory, then the master space $Z$ has an induced $H$-equivariant
perfect obstruction theory.
\end{lemm}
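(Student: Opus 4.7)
The plan is to pull back the given obstruction theory on $X$ to the total space $W\defeq X\times\Po$, combine it with the cotangent sheaf of the $\Po$-factor, and restrict to the open substack $(X\times\Po)^s$; the resulting $(T\times H)$-equivariant perfect obstruction theory will, by Definition \ref{def-perobth}, supply the required $H$-equivariant perfect obstruction theory on $Z=[(X\times\Po)^s/T]$.

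By hypothesis together with Definition \ref{def-perobth}, $X$ carries a $T$-equivariant perfect obstruction theory $\phi_X:F_X^\bullet\to\LL_X\gmo$. Let $q_1:W\to X$ and $q_2:W\to\Po$ denote the two projections; both are $T$-equivariant for the diagonal action on $W$ introduced in \S\ref{section-master}, and both commute with the $H$-action, which is trivial on $X$ and which scales the second homogeneous coordinate on $\Po$. Since $q_1$ is smooth with relative cotangent sheaf $q_2^*\Omega_{\Po}$, I would set $F_W^\bullet\defeq q_1^*F_X^\bullet\oplus q_2^*\Omega_{\Po}$ and let $\phi_W:F_W^\bullet\to\LL_W\gmo$ be the morphism given by $q_1^*\phi_X$ on the first summand and by the canonical arrow $q_2^*\Omega_{\Po}\to \Omega_W\hookrightarrow\LL_W\gmo$ on the second (using the product splitting $\Omega_W=q_1^*\Omega_X\oplus q_2^*\Omega_{\Po}$). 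A routine check on $h^0$ and $h^{-1}$, exploiting the smoothness of $q_1$ together with the fact that $\phi_X$ is an isomorphism on $h^0$ and surjective on $h^{-1}$, identifies $\phi_W$ as a perfect obstruction theory. Both summands of $F_W^\bullet$ are canonically $T$-equivariant (from $\phi_X$ on one, from the weighted $T$-action on $\Po$ on the other) and $H$-equivariant (trivially on $q_1^*F_X^\bullet$, by functoriality on $q_2^*\Omega_{\Po}$), and the two group actions commute by construction. Restricting $\phi_W$ to the $(T\times H)$-invariant open substack $(X\times\Po)^s$, on which $T$ has finite stabilizers by Lemma \ref{lem2.8}, yields the sought-after $H$-equivariant perfect obstruction theory on $Z$.

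The one point that I expect to need some care is the compatibility of local presentations: when $F_X^\bullet$ is built from embeddings $X\lalp\hookrightarrow Y\lalp$ into smooth stacks, as in the proof of Lemma \ref{p2.8}, one has to verify that the induced presentations of $F_W^\bullet$ via $Y\lalp\times\Po$ glue consistently on overlaps and define a global arrow into $\LL_W\gmo$. This is purely formal from the smoothness of $q_1$ and the naturality of the cotangent complex, and uses neither the symmetric structure of the obstruction theory nor the simple-flip hypothesis; the statement is in essence the general principle that $T$-equivariant perfect obstruction theories transplant naturally from $\cM$ to the master space $Z$.
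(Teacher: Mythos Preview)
Your argument is correct and follows the same approach as the paper: lift the $T$-equivariant obstruction theory of $X$ to a $(T\times H)$-equivariant one on $X\times\Po$, restrict to the open $(X\times\Po)^s$, and use that $T$ acts with finite stabilizers to descend to an $H$-equivariant theory on $Z$. The paper's proof is a terse paragraph that simply asserts the lift exists; your explicit construction $F_W^\bullet=q_1^*F_X^\bullet\oplus q_2^*\Omega_{\Po}$ and the accompanying $h^0/h^{-1}$ check are exactly the details the paper elides.
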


\begin{proof}
By definition, $X$ comes with a $T$-equivariant perfect
obstruction theory. The perfect obstruction theory of $X$ lifts to
a $T\times H$-equivariant perfect obstruction theory of
$X\times\Po$, which restricts to the open substack
$(X\times\Po)^s$. Since $T$ acts with only finite stabilizers, the
quotient $Z$, as we saw earlier for $M_\pm$, has an induced
$H$-equivariant perfect obstruction theory. This proves the lemma.
\end{proof}

It follows from the proof that $\vdim Z=\vdim X=\vdim
M_\pm=0$.

\def\hatbarx{\hat{\bar X}}

\subsection{Master space for $\bar X$}
By the same construction as above, we can define the master space
$\bar Z$ for the $\CC\sta$-intrinsic blow-up $\bar X$. We will show that
$M_\pm$ is a subset of the fixed points
of $\bar X$.

Consider $\bar X\times \PP^1$ with $T$-action
$$(w,[s_0,s_1])^\sigma = (w^\sigma,[\sigma\cdot
s_0, s_1])$$ and let
\[
(\bar X\times \PP^1)^s:=\bar X\times \PP^1 - (\sigc_-\cup \bar
E)\times\{0\}-(\sigc_+\cup \bar E)\times \{\infty\}
\]
where $\bar E$ is the inverse image of $X^T$ in $\bar X$.
The master space $\bar{Z}$ is defined as the quotient of the
open stack $(\bar X\times \PP^1)^s$ by the action of $T$. The same
argument as in the proof of Lemma \ref{lem2.8} proves that $\bar Z$
is a proper separated \DM stack. By construction, $\bar Z$ has a
partition
\[
\bar Z = \bar X\sqcup M_+\sqcup M_-,
\]
where $\bar X$ is identified with the quotient of $\bar
X\times(\PP^1-\{0,\infty\})$ by $T$.

The action of $H=\CC^*$ on $\bar{X}\times \PP^1$ by
$(w,[a_0,a_1])^\sigma=(w,[a_0,\sigma\cdot a_1])$ induces an
$H$-action on $\bar Z$. It is straightforward to check that the
$H$-fixed point set in $\bar Z$ is the (disjoint) union \beq
\label{eq3.019} \bar{Z}^H=\bar X^T\sqcup M_+\sqcup M_- . \eeq

When $\cM=[X/T]$ is equipped with a symmetric obstruction theory,
Proposition \ref{prop3.7} gives us a perfect obstruction theory
of $\bar X$ of virtual dimension $0$. By pulling back this obstruction
theory, $(\bar X\times \PP^1)^s$ is equipped with a perfect
obstruction theory of virtual dimension $1$; it then induces a
perfect obstruction theory on the master space $\bar Z$ of virtual
dimension $0$.

%%%%%%%%%%%%%%%%%%%%%%%%%%%%%%%%%%%%%%%%%%%%%%%%%%%%
%%%%%%%%%%%%%%%%%%%%%%%%%%%%%%%%%%%%%%%%%%%%%%%%%%%%
%%%%%%%%%%%%%%%%%%%%%%%%%%%%%%%%%%%%%%%%%%%%%%%%%%%%
%%%%%%%%%%%%%%%%%%%%%%%%%%%%%%%%%%%%%%%%%%%%%%%%%%%%
%%%%%%%%%%%%%%%%%%%%%%%%%%%%%%%%%%%%%%%%%%%%%%%%%%%%
%%%%%%%%%%%%%%%%%%%%%%%%%%%%%%%%%%%%%%%%%%%%%%%%%%%%

\section{A wall crossing formula for symmetric obstruction theories}
In this section, we prove a wall crossing formula for simple flips
with symmetric obstruction theory. Let $M_\pm=[X_\pm/T]\sub\cM$ be a simple flip.
We assume that $X$ embeds
$T$-equivariantly in a smooth DM-stack $Y$.

%In the proof of the wall-crossing for symmetric obstruction theories,
%we assume that there is a $T$-equivariant locally free sheaf $\sF$
%of $\cO_{X}$-modules and
%a $T$-equivariant surjective homomorphism onto the obstruction sheaf of
%$X$. We remark that this is not a
%restrictive assumption since, for one such complexes usually exist for moduli of
%sheaves or complexes, and for two the proof of the general case is parallel
%but with more involved notation.
%
%We continue to use the notation introduced in subsection 3.2 and 3.3. Let
%$q:\bar Z\to\bar X$ be the projection. In a moment, we will show that there is
%a $T$-equivariant locally free sheaf $\bar\sF$ of $\cO_{\bar X}$-modules that surjects onto
%the obstruction sheaf of $\bar X$. Then the obstruction sheaf of $\bar Z$ is canonically
%a quotient sheaf of $q\sta\bar \cF$. We let $\bar F$ be the vector bundle on $\bar X$ associated to
%$\bar\sF$.
%Because of our virtual localization assumption, there exists a

%Let $\sF$ be a $T$-equivariant locally free sheaf that surjects
%onto the obstruction sheaf of $X$, from the $\CC\sta$-localization
%requirement (2) in Remark \ref{remloc}. Let $\bar\pi: \bar X\to X$
%be the $\CC\sta$-intrinsic blow-up. We let
%$$\bar \sF=\ker\{ \bar\pi\sta \sF\lra \bar \pi\sta(\sF|_{X^T}\umv)\}
%$$
%which is a locally free sheaf. By \eqref{F-def}, it
%$T$-equivariantly surjects onto the obstruction sheaf $\Ob_{\bar X}$
%of $\bar X$.

\def\bV{\mathbf{V} }

We consider the projections
\[
\bar X\mapleft{q_-} (\bar X\times
\PP^1)^s\mapright{q_+} \bar Z.
\]
We use the notation of Proposition \ref{prop3.7} and Corollary
\ref{reduction}. The pull-back $q_-^*\bar V\bul$ descends to
the quotient stack $\bar Z$; we denote the descent by $[\sV_{-1}\to\sV_0]$.
Similarly,
$q_-^*\cO_{\bar X}(-2\bar E)$ descends to an invertible sheaf on $\bar
Z$, denoted by $\cO_{\bar Z}(-2\bar E)$.

Let $C_{\bar Z}\sub \sV_{-1}\dual$ be the virtual normal
cone cycle of $\bar Z$, where by abuse of notation we
denote by $\sV_{-1}\dual$ also the vector bundle associated to
$\sV_{-1}\dual$. By Corollary \ref{reduction}, the cycle
$C_{\bar Z}$ lies in the kernel bundle
$$\sV_{-1,\text{red}}\dual=\ker\{ \sV_{-1}\dual\lra \cO_{\bar Z}(-2\bar E)\}.
$$
We let $[\bar Z]\virt=0^!_{\sV_{-1}\dual}[C_{\bar Z}]$ and $[\bar
Z]\virt_{\mathrm{red}}=0^!_{\sV_{-1,\text{red}}\dual}
[C_{\bar Z}]$, as $H$-equivariant cycles. Thus
\[
[\bar Z]\virt=[\bar Z]\virt_{\mathrm{red}}\cap
c_1(\cO_{\bar Z}(-2\bar E)).
\]

Applying the virtual localization theorem \cite{GrPand} to $\bar{Z}$, and using
\eqref{eq3.019}, we obtain \beq [\bar{Z}]\virt = \frac{[\bar X^T
]\virt}{e(N\virt)}-[M_+]\virt - [M_-]\virt.
\eeq
After dividing both
sides by $c_1(\cO_{\bar X}(-2\bar E))$, we obtain
\[
[\bar Z]\virt_{\mathrm{red}}= \frac{[\bar X^T
]\virt}{e(N\virt)c_1(\cO_{\bar
Z}(-2\bar E))}+[M_+]\virt/(-t)+[M_-]\virt/t,
\]
where $t\in A^1_{\CC^*}(pt)$ is the standard generator of
$A^*_{\CC^*}(pt)$. Taking the residue at $t=0$, the left hand side
vanishes because $[\bar Z]\virt_{\mathrm{red}}$, as an equivariant
class, has no negative degree terms in $t$. \black We thus obtain
\beq\label{eq-afwc} \deg [M_+]\virt - \deg [M_-]\virt=\res_{t=0}
\frac{[\bar X^T ]\virt}{e(N\virt)c_1(\cO_{\bar Z}(-2E))}. \eeq

To proceed, we need a description of the cycle $[\bar
X^T]\virt$. We keep the notation introduced after \eqref{V0}. Namely, $\cN=N_{Y^T/Y}$,
$\cN_{(k)}$ is the weight $k$ piece of $\cN$, $\bar Y^{T,k}=\PP\cN_{(k)}$,
and $\bar X^T=\coprod_{k\ne 0} \bar X^{T,k}$, where $\bar X^{T,k}= \bar Y^{T,k}\cap \bar X$.
\footnote{For $p\in X^T$, we let
%Let $B\sub X^T$ be a connected component and $B\sub
%X\lalp$ be a $T$-equivariant open neighborhood with a
%$T$-equivariant embedding $X\lalp\sub Y\lalp$ into a smooth stack.
%We let $\VV\lalp=[\cO_{X\lalp}(T_{Y\lalp})\to
%\cO_{X\lalp}(T_{Y\lalp}\dual)]\to \LL_{X\lalp}$ be $T$-equivariant
%symmetric obstruction theory of $X\lalp$.
$$ H^i(p)_j=\text{weight $j$ part of the $T$-decomposition of } H^i(V\bul|_p);
$$
we let $\ell^i_j(p)=\dim H^i(p)_j$ and let
$\delta_j(p)=\ell^0_j(p)-\ell^1_{-j}(p)-1$. We also let
$$\PP^{\delta_j(p)} H^0(p)_j\sub  \PP H^0(p)_j
$$
be a dimension $\delta_j(p)$ linear subspace; it is
the empty set when $\delta_j(p)<0$.
}

As argued in \cite{GrPand}, the obstruction theory of $X^T$ is given by
$[\cV\to \cV^{\vee}]^T\to\LL_{X^T}\gmo$ and has
virtual dimension zero. Thus
\beq\label{XT-vir}
[X^T]\virt=\sum_{i=1}^r a_i [p_i],\quad a_i\in\QQ, \ p_i\in X^T\ \text{closed}.
\eeq
Let $\bar\pi_{(k)}: \PP\cN_{(k)}\to Y^T$ be the projection.

\begin{lemm}\label{lem-vcfix} With the above notation, we have
$$[\bar X^T]\virt=\sum_{1\leq i\leq r,\,k\ne 0}  a_i \cdot e(\bar\pi_{(k)}\sta \cN_{(-k)}\dual(1))
\cap [\PP\cN_{(k)}\times_{Y^T} p_i].
$$
\end{lemm}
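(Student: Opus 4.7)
The plan is to apply the distinguished triangle \eqref{diag-dt} to the $T$-fixed part of the obstruction theory of $\bar X$ on each component $\bar X^{T,k}$, and then identify the two resulting factors. Standard virtual cycle formalism for a distinguished triangle of obstruction theories (as in \cite{GrPand} or Behrend--Fantechi) gives
\[
[\bar X^{T,k}]\virt = e(\text{relative obstruction bundle})\cap [\PP_X\cN_{(k)}]\virt,
\]
so it suffices to (a) compute $[\PP_X\cN_{(k)}]\virt$ where $\PP_X\cN_{(k)}=\PP\cN_{(k)}\times_{Y^T}X^T$, and (b) identify the relative obstruction bundle of $\bar X^{T,k}\sub \PP_X\cN_{(k)}$.

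For (a), I would observe that $F_{(k)}\bul$ of \eqref{3} is built solely from $\bar\alpha_0=\bar\pi\sta\alpha|_{\bar Y^{T,k}}$. Since the fixed-weight $1$-form $\alpha\in\Omega_A\otimes A[\![x]\!]$ depends only on the coordinates on $Y^T$, $\bar\alpha_0$ is the pullback along the smooth projection $\bar\pi_{(k)}\colon\PP\cN_{(k)}\to Y^T$ of the $1$-form on $Y^T$ that cuts out $X^T$ and induces $V\bul|_{X^T}^{\text{fix}}$. Hence $F_{(k)}\bul$ is the pullback of the fixed-part obstruction theory of $X^T$ along the smooth morphism $\PP_X\cN_{(k)}\to X^T$, and virtual pullback gives
\[
[\PP_X\cN_{(k)}]\virt = \bar\pi_{(k)}\sta [X^T]\virt = \sum_{i=1}^r a_i\,[\PP\cN_{(k)}\times_{Y^T} p_i].
\]

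For (b), I would analyze the equations $\bar\beta_{-k,j}=\xi\upmo\bar\pi\sta f_{-k,j}|_{\bar Y^{T,k}}$ of \eqref{2}. Since $f_{-k,j}$ has $T$-weight $-k$, only the terms linear in $x_{-k,i}$ survive on $\bar Y^{T,k}$, giving $\bar\beta_{-k,j}=\sum_i a_{-k,j}^{-k,i}\cdot\xi\upmo\bar\pi\sta x_{-k,i}|_{\bar Y^{T,k}}$. Under the tautological identification $\cO_{\bar Y}(-\bar E)|_{\bar Y^{T,k}}\cong \cO_{\PP\cN_{(k)}}(1)$, each $\xi\upmo\bar\pi\sta x_{-k,i}$ is a section of $\cO_{\PP\cN_{(k)}}(1)$, and combining in the index $i$ via the tautological pairing between $x_{-k,i}$ and $\cN_{(-k)}\dual$, the tuple $(\bar\beta_{-k,j})_{j=1}^{e_{-k}}$ assembles into a section of the rank-$e_{-k}$ bundle $\bar\pi_{(k)}\sta\cN_{(-k)}\dual(1)$ on $\PP\cN_{(k)}$ whose zero scheme is $\bar X^{T,k}$. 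This identifies the relative obstruction bundle, so
\[
[\bar X^{T,k}]\virt = e\bl\bar\pi_{(k)}\sta\cN_{(-k)}\dual(1)\br\cap [\PP_X\cN_{(k)}]\virt.
\]
Combining (a) and (b) and summing over $k\ne 0$ gives the stated formula.

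The main technical obstacle is step (b): carefully tracking the twist ``$(1)$'' and verifying that $(\bar\beta_{-k,j})_j$ really lives in $\bar\pi_{(k)}\sta\cN_{(-k)}\dual(1)$. This reduces to the identification $\cO_{\bar Y}(-\bar E)|_{\bar Y^{T,k}}\cong\cO_{\PP\cN_{(k)}}(1)$ and writing $\bar\pi\sta x_{-k,i}=\xi\cdot s_{-k,i}$ with $s_{-k,i}$ a section of $\cO_{\bar Y}(-\bar E)$, combined with assembling the index $i$ into the dual bundle $\cN_{(-k)}\dual$. Since the obstruction theory of $\bar X$ is built from \'etale-local data by Proposition \ref{prop3.7}, these local identifications must be patched across the atlas; the independence of $\omega\lalp$ established after \eqref{diag-dt}, together with the canonical construction \eqref{U-ob}, ensures the local bundles and sections glue consistently.
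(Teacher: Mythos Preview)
Your proposal is correct and follows essentially the same approach as the paper: both use the distinguished-triangle diagram \eqref{diag-dt} to split $[\bar X^{T,k}]\virt$ into the Euler class of the relative obstruction bundle $E_{(k)}\dual=\bar\pi_{(k)}\sta\cN_{(-k)}\dual(1)$ capped against $[\PP_X\cN_{(k)}]\virt$, and then compute the latter via the pullback obstruction theory \eqref{3} governed by $\bar\alpha_0$. The only minor difference is that the paper invokes the functoriality result of Kim--Kresch--Pantev \cite{KKP} for the product formula, whereas you appeal to general virtual-cycle formalism; since the relative obstruction theory here has only a degree $-1$ term (a bundle), either citation suffices.
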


\begin{proof}
Using diagram \eqref{diag-dt} and the explicit form $E_{(k)}=\bar\pi_{(k)}\sta \cN_{(-k)}(-1)|_{\bar X^{T,k}}$, 
applying the main result in \cite{KKP}, 
we obtain
$$[\bar X]\virt=\sum_{k\ne 0}[\bar X^{T,k}]\virt=
\sum_{k\ne 0} e(\bar\pi_{(k)}\sta \cN_{(-k)}\dual(1))\cap [\PP_X \cN_{(k)}]\virt.
$$
For $[\PP_X\cN_{(k)}]\virt$, we notice that the obstruction theory of
$\PP_X\cN_{(k)}$ is given by \eqref{3}, and the arrow is given by $\bar\alpha_0$ in \eqref{1}.
Thus using \eqref{XT-vir}, we obtain
$$[\PP_X\cN_{(k)}]\virt=\sum_{1\le i\le r} a_i\cdot [\PP_X\cN_{(k)}\times_X p_i].
$$
This proves the Lemma.
\end{proof}

We now prove our main theorem.

\begin{theo}\label{theo-sym} Let the situation be as in Theorem \ref{thm1.2}.
Let $[X^T]\virt=\sum_{i=1}^ra_i[p_i]$; let
$n_{i,j}$ be the dimension of the weight $j$ part of the tangent space $T_{X,p_i}$ and $n_i=\sum_j n_{i,j}$.
Then
\[
\deg [M_+]\virt - \deg [M_-]\virt=\sum_{i} a_i\cdot
\Bigl((-1)^{n_i-1}\sum_j\frac{n_{i,j}}{j}\Bigr).
\]
%In particular, if $(-1)^{n_k-1}\sum_j\frac{n_{k,j}}{j}=:\lambda$
%is independent of $k$, then we have
%\[
%\deg [M_+]\virt - \deg [M_-]\virt=\lambda\cdot \deg [X^T]\virt
%\]
\end{theo}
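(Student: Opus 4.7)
The starting point is equation \eqref{eq-afwc}, which reduces the wall crossing formula to evaluating
\[
\res_{t=0}\frac{[\bar X^T]\virt}{e(N\virt)\,c_1(\cO_{\bar Z}(-2\bar E))}.
\]
Using Lemma \ref{lem-vcfix}, the cycle $[\bar X^T]\virt$ decomposes as a sum over pairs $(i,k)$ with $k\ne 0$ of classes supported on $\PP\cN_{(k)}\times_{Y^T}p_i=\PP V_{i,k}\cong\PP^{n_{i,k}-1}$, where $V_{i,k}$ is the weight $k$ subspace of $T_{X,p_i}$. The residue accordingly splits into a sum over $(i,k)$ of equivariant integrals over these projective spaces, and the task becomes to evaluate each contribution and assemble them into the claimed formula.

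The central technical step is to identify the $H$-equivariant Chern classes of the bundles in the integrand restricted to each $\PP V_{i,k}$. This rests on the explicit two-term complex description of the obstruction theory $\bar V\bul$ from Proposition \ref{prop3.7}, the local form of the master space $\bar Z=[(\bar X\times\Po)^s/T]$ near its $H$-fixed components, and the identification of $H$ with the residual $\csta$-action on the $\Po$-factor after the $T$-quotient. Writing $h=c_1(\cO_{\PP V_{i,k}}(1))$ for the hyperplane class, these together express $e(N\virt)|_{\PP V_{i,k}}$, $c_1(\cO_{\bar Z}(-2\bar E))|_{\PP V_{i,k}}$ and $e(\bar\pi_{(k)}^*\cN_{(-k)}^\vee(1))|_{\PP V_{i,k}}$ as explicit products of linear forms in $h$ and $t$, with coefficients dictated by the $T$-weights of the tangent directions at $p_i$. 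Carefully tracking how these $T$-weights translate into $H$-weights on $\bar Z$ is the delicate point of the calculation.

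Once the integrand is in this explicit form, pushforward along $\PP V_{i,k}\to\spec\CC$ extracts the coefficient of $h^{n_{i,k}-1}$ and yields, at each $(i,k)$, a rational function of $t$. Summing over $k$ and taking the residue at $t=0$ should collapse, by a partial fraction identity, to $a_i\cdot(-1)^{n_i-1}\sum_j n_{i,j}/j$, and summing over $i$ then completes the proof. The main obstacle is this final combinatorial simplification: one has to verify that the contributions from the various projective spaces $\PP V_{i,k}$ combine so as to leave only the simple sum $\sum_j n_{i,j}/j$, and that the sign $(-1)^{n_i-1}$ emerges correctly from the parity of the total moving dimension of the virtual normal bundle.
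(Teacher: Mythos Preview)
Your overall strategy matches the paper's: start from \eqref{eq-afwc}, apply Lemma \ref{lem-vcfix} to localize to the projective spaces $\PP V_{i,k}$, write the integrand in terms of the hyperplane class and $t$, push forward, and extract the residue. The explicit forms you anticipate for $e(N\virt)$, $c_1(\cO_{\bar Z}(-2\bar E))$, and $e(\cN_{(-k)}^\vee(1))$ are exactly what the paper computes.

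Where your proposal falls short is precisely the step you flag as the ``main obstacle'': you say the sum over $k$ ``should collapse, by a partial fraction identity,'' but you do not supply one, and no naive partial-fraction decomposition makes this transparent. The paper's device here is worth knowing. After writing out the Euler classes, the integral over $\PP V_{j}$ (fixing $i$ and dropping it from the notation) becomes the coefficient of $\zeta^{n_j-1}$ in an explicit rational function of $\zeta$ and $t$. Setting $x=\zeta/t$, this coefficient is itself a residue, $\res_{x=0}$, of a rational function of $x$ alone. After the substitution $z=x-j$, the sum over $j$ becomes
\[
-\sum_{j\ne 0}\res_{z=-j}\ \frac{1}{2z^2}\prod_{i\ne 0}\Bigl(\frac{-i+z}{i+z}\Bigr)^{n_i}.
\]
Now the crucial observation: the only finite poles of this function are at $z=-j$ for $j\ne 0$ and at $z=0$, and it is $O(z^{-2})$ at infinity, so the sum of all residues vanishes. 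Hence the displayed sum equals $+\res_{z=0}$, which is $\frac12\frac{d}{dz}\big|_{z=0}\prod_{i\ne 0}\bigl(\tfrac{-i+z}{i+z}\bigr)^{n_i}=(-1)^{n-1}\sum_{j\ne 0} n_j/j$ by logarithmic differentiation. This residue-theorem step is what replaces your unspecified ``partial fraction identity'' and simultaneously produces the sign $(-1)^{n-1}$; without it, the proof is incomplete.
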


\begin{proof}
By Lemma \ref{lem-vcfix} and \eqref{eq-afwc}, it suffices to show
that for any closed point $p\in X^T$,
\beq\label{eq-ptwcf}
\sum_{j\ne 0}\mathrm{res}_{t=0}\frac{e(\bar\pi_{(j)}\sta \cN_{(-j)}\dual(1))\cap 
[\PP\cN_{(j)}\times_{Y^T} p]}{e(N\virt)\,
c_1(\cO_{\bar Z}(-2\bar E)) } = (-1)^{n-1}\sum_{j\ne
0}\frac{n_j}{j} ,
\eeq
where $n_j$ is the dimension of the weight
$j$ part $V_j$ of the Zariski tangent space of $X$ at $p$ and
$n=\sum_{j\ne 0}n_j$. 

For this purpose, we may assume $X^T$ is a
point $p$, and $Y$ be so that $N_{Y^T/Y}|_p$ is the moving part of $T_p X$.
Let $V=\oplus_{j\ne 0}V_j$ with $p$ identified with $0$. 
Then
\[
e(\bar\pi_{(j)}\sta \cN_{(-j)}\dual(1))\cap 
[\PP\cN_{(j)}\times_{Y^T} p]=e(V_{-j}^\vee (1))\cap [\PP V_j]
\]
on $\PP V$ and \eqref{eq-ptwcf} is equivalent to
\[
\sum_{j\ne 0}\int_{\PP V_j} \frac{t\, e(V_{-j}^\vee (1))}{e(N\virt|_{\PP
V_j})c_1(\cO_{\bar Z}(-2\bar E)|_{\PP V_j})} =
(-1)^{n-1}\sum_{j\ne 0}\frac{n_j}{j} .
\]
Let $\zeta\in A^1(\PP V_j)$ be the generator of $A^*(\PP V_j)$
satisfying $\int_{\PP V_j} \zeta^{n_j-1}=1$. Then $$c_1(\cO_{\bar
Z}(-2\bar E)|_{\PP V_j})=2(-jt+\zeta).$$ Furthermore, we have
\[
e(\pi^*\Omega_Y(-E)|_{\PP V_j})=\prod_{i\ne
0}(-it-jt+\zeta)^{n_i},\quad e(V_{-j}^\vee (1)|_{\PP
V_j})=\zeta^{n_{-j}}.
\]
The normal bundle of $\PP V_j$ has Euler class
\[
(jt-\zeta)\prod_{i\ne j,0}((i-j)t+\zeta)^{n_i}
\]
and thus the Euler class of $N\virt$ is
\[
e(N\virt|_{\PP V_j})=(jt-\zeta) \prod_{i\ne j,
0}((i-j)t+\zeta)^{n_i}/\prod_{i\ne -j,0}(-it-jt+\zeta)^{n_i}.
\]
Therefore we have
\[
\sum_j \int_{\PP V_j}\frac{t\, e(V_{-j}^\vee (1))}{e(N\virt|_{\PP
V_j})c_1(\cO_{\bar Z}(-2\bar E)|_{\PP V_j})}
=\qquad\qquad\qquad\qquad\qquad\qquad
\]
\[\qquad\qquad\qquad=
\sum_j\int_{\PP V_{j}} \frac{t\prod_{i\ne
0}(-it-jt+\zeta)^{n_i}}{2(-jt+\zeta)(jt-\zeta)\prod_{i\ne j,
0}((i-j)t+\zeta)^{n_i}}
\]
\[
\qquad=-\sum_j \int_{\PP V_{j}}\frac{ t^{n_{j}-1}\prod_{i\ne 0
}(-i-j+\frac{\zeta}{t})^{n_i}}{
2(j-\frac{\zeta}{t})^{2}\prod_{i\ne
0,j}(i-j+\frac{\zeta}{t})^{n_i}}
\]
\[
\qquad\quad=-\sum_j \res_{x=0}\frac{x^{-n_{j}}\prod_{i\ne 0
}(-i-j+x)^{n_i}}{2(j-x)^2\prod_{i\ne 0,j}(i-j+x)^{n_i}}
\]
\[
\qquad\quad=-\sum_j \res_{x=0}\prod_{i\ne 0}\left(\frac{-i-j+x}{i-j+x}
\right)^{n_i}\frac{1}{2(j-x)^2}
\]
\[
\!\!= -\sum_j \res_{z=-j}\prod_{i\ne 0}\left(\frac{-i+z}{i+z}
\right)^{n_i}\frac{1}{2z^2}.
\]
%
%(-1)^{n-1}(\prod_{i=1}^el_i^{n_i}) \res_{x=0}\left( \prod_{i\ne
%j}(l_i-l_j+x)^{-n_i}(l_j-x)^{-2}x^{-n_j} \right)
%\]
%\[
%=-(-1)^{n-1}(\prod_{i=1}^el_i^{n_i}) \res_{z=l_j} \left(
%\prod_{i=1}^e(l_i-z)^{-n_i}z^{-2} \right).
%\]
%Hence the wall crossing contribution at $p_k$ is
\[
%=\sum_{j=1}^e -\res_{z=-j}\prod_{i=1}^e\left(\frac{-i+z}{i+z}
%\right)^{i}\frac{1}{2z^2}
\qquad\qquad\qquad\qquad\ =\res_{z=0}\prod_{i\ne 0}\left(\frac{-i+z}{i+z}
\right)^{n_i}\frac{1}{2z^2}%\qquad
%\]
%\[\qquad\qquad\qquad
=\frac12 \frac{d}{dz}|_{z=0}\prod_{i\ne 0} \left(\frac{-i+z}{i+z}
\right)^{n_i}
\]
\[ \!\!\!\!\!\!\!\!\!\!\!\!\!\!\!\!\!\!\!\!\!\!\!\!\!\!\!\!\!\!\!\!\!\!\!\!\!\!\!\!\!\!\!\!\!\!\!\!\!\!=(-1)^{n-1}\sum_{j\ne 0}\frac{n_{j}}{j}.
\]
This proves the theorem.
\end{proof}

For example, if there are only two weight spaces of weights $1$
and $-1$ respectively, then the wall crossing is
\[
(-1)^{n_++n_--1}\cdot (n_+-n_-)\cdot \deg [X^T]\virt,
\]
where $n_+$ and $n_-$ are the dimensions of the positive and
negative weight spaces respectively, of the moving part of the
Zariski tangent space.

\vsp

In the situation of simple wall crossing (Definition \ref{assum1})
in $D^b(\mathrm{Coh} S)$ for $S$ a Calabi-Yau 3-fold, we obtain
the wall crossing formula formulated in Corollary \ref{coro1.3}.

%\red
%\begin{coro}\label{coro4.8} Suppose the technical conditions in
%Theorem \ref{theo-sym} hold for the $\cM=[X/\csta]$ constructed in
%Proposition \ref{p.2.10}. Let $n=\dim \Ext^1(E_2,E_1)$ and $m=\dim
%\Ext^1(E_1,E_2)$, where $E_i\in M_i$. Then we have
%$$\deg [M_+]\virt - \deg [M_-]\virt =
%(-1)^{n+m-1}\cdot (n-m)\cdot \deg [{M_1}] \virt \cdot \deg
%[{M_2}]\virt.$$
%\end{coro}

\begin{proof}[Proof of Corollary \ref{coro1.3}] We use the notation of \S\ref{subsection2.3}.
The automorphism group of a point in $X$ lying over $E_1\oplus
E_2$ with $E_1\in M_1$ and $E_2\in M_2$ is $\ZZ_\nu$ with $\nu$ in
\eqref{defnu}. Hence $$[X^T]=\frac1\nu [M_1]\virt\times
[M_2]\virt.$$ From the definition of the $\CC^*$-action on $X$,
the nontrivial weights on the Zariski tangent space are
$\frac1\nu$ on $Ext^1(E_2,E_1)$ and $-\frac1\nu$ on
$Ext^1(E_1,E_2)$ respectively. In the formula of Theorem
\ref{theo-sym}, the weights $\pm \frac1\nu$ that go in the
denominator cancel out the size of the automorphism group $\nu$.
Thus we obtain the corollary.
\end{proof}

\begin{rema}
In a subsequent paper, we will generalize our wall crossing
formula to the case $\cM_{\pm}\subset [X/G]$ where $G$ is any
complex reductive group acting on the semistable part $X$ of a
projective scheme. \end{rema}

%%%%%%%%%%%%%%%%%%%%%%%%%%%%%%%%%%%%%%%%%%%%%%%%%%%%
%%%%%
\begin{appendix}
\section{Simple wall crossing for perfect obstruction theories}
In this section, we prove a virtual analogue of a wall crossing
formula for $\CC^*$-flips.

%\subsection{Induced obstruction theory}\label{sec2.2}
Since ${\CC\sta}$ acts on $X_\pm$ with finite stabilizers, a
${\CC\sta}$-equivariant perfect obstruction theory of $X$ induces a
perfect obstruction theory of $M_\pm$. Let $[M_\pm]\virt$ be the
associated virtual fundamental cycle \cite{BF, LT}. %In this section,
%we will provide a formula that compares the virtual intersection
%numbers on $M_+$ and $M_-$.

%The action of
%$T$ gives us an injective homomorphism $\cO_{X_\pm}\to
%\cT_{X_\pm}$ which induces an injection $\cO_{X_\pm}\to B^1$ of
%vector bundles. If we denote its cokernel $B^1_{red}$, then we
%have a ${\CC\sta}$-equivariant complex $B^1_{red}\to B^2$ of vector
%bundles on $X_\pm$. Since the group action is free, the complex
%$B^1_{red}\to B^2$ descend to a complex of locally free sheaves on
%$M_\pm$. It is obvious that this induced complex is a perfect
%obstruction theory of $M_\pm$.
%Let $U\subset X_\pm$ be an open neighborhood which admits a
%$T$-equivariant embedding into into a smooth scheme $V$ and it is
%defined by the vanishing of an invariant section $\omega$ of some
%vector bundle $E_1$ on $V$. Since $T$ acts locally freely on $V$
%(i.e. only finite stabilizers allowed), the group action induces
%an injective homomorphism $\cO_V\to TV$ of vector bundles and its
%composition with $TV\mapright{d\omega} E_1$ is zero. Let $E_0$ be
%the cokernel of the injection $\cO_V\to TV$. Then the equivariant
%complex $E_0\to E_1$ descends to $M_\pm$ to give a perfect
%obstruction theory on $M_\pm$.

\begin{defi} Suppose $M_\pm\subset \cM$ are proper simple flips.
Let $d$ be their virtual dimension and $\alpha\in A^d_{\CC^*}(X)$ be
an equivariant cohomology class.
%\footnote{Our convention for the
%grading of equivariant Chow groups is such that we have an
%isomorphism $A^d_{\CC^*}(X_\pm)\cong A^d(M_\pm)$. }
Then the
\emph{wall crossing term} of $\alpha$ is defined as
$$\delta[\alpha]=\alpha_+\cdot[M_+]\virt-\alpha_-\cdot[M_-]\virt,
$$
where $\alpha_\pm\in A^d(M_\pm)$ are the classes induced by
restricting $\alpha$ to $X_\pm$ and applying the isomorphisms
$A^*_{{\CC\sta}}(X_\pm)\cong A^*(M_\pm)$.
\end{defi}

%\begin{rema}\label{remloc}
%We intend to calculate the wall crossing $\delta[\alpha]$ by
%applying the virtual localization theorem \cite{GrPand} to a \DM
%stack equipped with a $\CC^*$-equivariant perfect obstruction
%theory. However this theorem requires the existence of global
%nonsingular embedding. But note that by combining the work of
%Kresch and the observation that localization is a local
%construction, the virtual localization formula of Graber and
%Pandharipande actually works if we assume that

%\medskip \noindent
%(1) each connected component $B$  of $X^{\CC\sta}$ has a
%$\CC\sta$-equivariant open neighborhood $B\sub U\sub X$ such that
%$U$ embeds $\CC\sta$-equivariantly in a smooth \DM stack Y;

%\medskip \noindent
%(2) there is a $\CC\sta$-equivariant locally free sheaf $\sF$ of
%$\sO_X$-modules that surjects onto the obstruction sheaf of $X$.
%\black

%\medskip \noindent We call these two conditions the \emph{localization
%requirement}.
%\end{rema}

\begin{theo}\label{thm-1}
Let $M_\pm$ be simple flips in the quotient stack
$\cM=[X/\CC^*]$ that has a perfect obstruction theory.
Suppose $X$ embeds $\csta$-equivariantly into a smooth DM-stack.
%admits a $\CC^*$-equivariant embedding into a
%nonsingular \DM stack.\footnote{This is a technical assumption
%required in appl,ying virtual localization \cite{GrPand}.}
Then for $\alpha\in A^d_{\CC^*}(X)$, the wall crossing is given by
$$\delta[\alpha]=\sum_i \mathrm{res}
\Bigl(\alpha\cdot\frac{[X_i]\virt}{e(N_i\virt)}\Bigr)\in \QQ,
$$
where $X_i$ are connected components of $X^{\CC\sta}$ and $N_i\virt$
are the $\CC\sta$-equivariant virtual normal bundles of $X_i$ in
$X$.
\end{theo}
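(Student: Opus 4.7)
The plan is to adapt the master space technique of \S\ref{section-master} and apply $H$-virtual localization on the master space $Z=(X\times\Po)^s/T$, where $H=\csta$ is the auxiliary scaling action on the second coordinate of $\Po$. Unlike the symmetric case of Theorem \ref{thm1.2}, here the virtual dimensions already match ($\vdim Z=\vdim X=d+1$ and $\vdim M_\pm=d$), so no $\csta$-intrinsic blow-up or reduced virtual class is required. By Lemma \ref{lem2.8}, $Z$ is a proper separated \DM stack, and the $T$-equivariant POT on $X$ pulls back along $q_-$ and descends to an $H$-equivariant POT on $Z$ of virtual dimension $d+1$.

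The first step is to identify the $H$-fixed locus $Z^H=M_+\sqcup M_-\sqcup X^T$ and compute the $H$-equivariant virtual normal bundles. Let $L$ be the line bundle on $Z$ corresponding to the weight-$1$ character of the $T$-torsor $(X\times\Po)^s\to Z$; under descent, the $T$-equivariant parameter $t_T$ on $X$ becomes $c_1(L)$. A direct weight computation on $T_0\Po$ and $T_\infty\Po$ gives
\[
e(N^\virt_{M_+/Z})=c_1(L|_{M_+})-t_H, \qquad e(N^\virt_{M_-/Z})=t_H-c_1(L|_{M_-}),
\]
where $t_H$ is the $H$-equivariant parameter. The crucial identity, obtained by trivializing the torsor $X^T\times\CC^*\to X^T$ along a slice of the $\CC^*$-factor and tracking how the $H$-action mixes with the renormalization, is $c_1(L)|_{X^T}=t_H$ in $A^*_H(X^T)$. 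Combined with descent of the $T$-weight decomposition of $V^\bullet|_{X^T}$, this shows that the $H$-fixed POT at the $X^T$ component coincides with the $T$-fixed POT of $X^T\subset X$, so that $[X_i]\virt_Z=[X_i]\virt_X$ for each component $X_i$ of $X^T$; simultaneously, the $H$-equivariant Euler class of the $H$-moving POT equals $e(N_i\virt)|_{t_T=t_H}$.

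The final step applies the virtual $H$-localization theorem \cite{GrPand} to $\tilde\alpha\cap[Z]\virt$, where $\tilde\alpha\in A^d_H(Z)$ is the descent of $\alpha$ (automatically $H$-invariant since $H$ acts trivially on $X$). Since $\tilde\alpha\cap[Z]\virt$ has virtual dimension one and $Z$ is proper, its pushforward to a point vanishes in $A^H_1(\mathrm{pt})=0$. The resulting identity in $\QQ(t_H)$ reads
\[
0 \;=\; \int_{M_+}\!\frac{\alpha_+\cdot[M_+]\virt}{c_1(L|_{M_+})-t_H} + \int_{M_-}\!\frac{\alpha_-\cdot[M_-]\virt}{t_H-c_1(L|_{M_-})} + \sum_i \int_{X_i}\!\frac{\alpha\cdot[X_i]\virt}{e(N_i\virt)}\Big|_{t_T=t_H}.
\]
In each $M_\pm$ integrand, only the $c_1(L|_{M_\pm})^0$ term of the Laurent expansion in $t_H^{-1}$ contributes after pairing, because $\alpha_\pm\cdot[M_\pm]\virt$ has total dimension zero; this yields $-\alpha_+\cdot[M_+]\virt/t_H$ and $\alpha_-\cdot[M_-]\virt/t_H$ respectively. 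Extracting the coefficient of $t_H^{-1}$ (the residue at $t_H=0$) from both sides and rearranging produces the wall-crossing formula $\delta[\alpha]=\sum_i\mathrm{res}(\alpha\cdot[X_i]\virt/e(N_i\virt))$. The main obstacle will be verifying the identity $c_1(L)|_{X^T}=t_H$ and carefully matching the POT descent from $Z$ to each fixed component $X_i$, so that the $H$-equivariant residue extracted from the master space computation reproduces the $T$-equivariant residue in the theorem statement.
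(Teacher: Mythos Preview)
Your proposal is correct and follows essentially the same route as the paper: construct the master space $Z$, apply $H$-virtual localization, and extract the residue. The paper is terser—it states the localization identity $[Z]\virt=\iota_*\sum[X_i]\virt/e(N_i\virt)+[M_+]\virt/(-t)+[M_-]\virt/t$ directly and pairs with $t\tilde\alpha$, while you unpack the virtual normal bundles at $M_\pm$ via the line bundle $L$ and then simplify—but the underlying mechanism is identical.
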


Here the residue operator $\mathrm{res}=\mathrm{res}_{t=0}$ is taken
after expanding the right hand side as a Laurent series in $t$,
where $t$ is the generator of $A^*_{\CC^*}(pt)$.

%Throughout this paper, we
%assume the following: \begin{enumerate}%\text{The open substacks }
%\item [(*)] $M_\pm=[X_\pm/T]$ are proper \emph{separated}
%Deligne-Mumford stacks.\end{enumerate} The point of this assumption
%is to exclude the artificial situation obtained by simply deleting
%the fixed point set. For instance, if $V=V_1\times V_2$ is the
%product of two vector spaces on which $T$ acts with weights $1,-1$
%respectively and $X=V-\{(0,0)\}$, then $X=X_\pm$ but $M_\pm$ is not
%separated.

Let $t\in A^1_{\CC^*}(pt)$ be the generator of the $H$-equivariant Chow
ring $A^*_{\CC^*}(pt)$.  Let $\alpha\in A^d_{T}(X)$. Let
$\alpha_\pm\in A^d(M_\pm)$ be the classes induced by restricting
$\alpha$ to $X_\pm$ and applying the isomorphisms
$A^*_{T}(X_\pm)\cong A^*(M_\pm).$ Let $\tilde{\alpha}\in A^d(Z)$ be
the class induced by pulling back $\alpha$ to $(X\times \PP^1)^s$
and applying the isomorphism
$$A^*_{T}((X\times\Po)^s)\cong
A^*(Z).
$$
Let $\alpha_i$ be the restriction of $\alpha$ to the fixed point
component $X_i$. Then the restrictions of $t\tilde{\alpha}$ to the
fixed point set $M_\pm$ and $X_i$ coincide with $t\alpha_\pm$ and
$t\alpha_i$.

We apply the virtual localization theorem \cite{GrPand} to obtain
the following.

\begin{lemm} Let $X$ be a \DM stack equipped with a
$\CC^*$-equivariant perfect obstruction theory and  $X$ embeds
$\csta$-equivariantly into a smooth DM-stack. Then we have
\[
[Z]\virt=\imath_*\sum \frac{[X_i]\virt}{e(N_i\virt)} +
[M_+]\virt/(-t)+[M_-]\virt/t.
\]
\end{lemm}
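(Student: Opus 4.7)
The plan is to apply the Graber--Pandharipande virtual localization formula \cite{GrPand} to the master space $Z$ with respect to the $H=\csta$-action, and read off the three contributions. The previous lemma equips $Z$ with an $H$-equivariant perfect obstruction theory, obtained by descent of the $T\times H$-equivariant obstruction theory pulled back from $X$ to $(X\times\Po)^s$. The assumption that $X$ embeds $T$-equivariantly in a smooth DM-stack $Y$ produces, after taking the product with $\Po$ and the $T$-quotient of a suitable open substack of $Y\times\Po$ on which $T$ has finite stabilizers, an $H$-equivariant embedding of $Z$ into a smooth DM-stack. This is exactly the hypothesis required to invoke \cite{GrPand}.

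Next I would identify the $H$-fixed locus. A direct check gives
\[
Z^H = M_+\sqcup M_-\sqcup X^T,
\]
in parallel with \eqref{eq3.019}: $H$ fixes $M_\pm=X_\pm\times\{0,\infty\}/T$ tautologically, and on a point $[x,c]$ with $x\in X^T$ and $c\in\csta$ the $H$-translate $[x,hc]$ agrees with $[x,c]$ because the $T$-element $\sigma=h$ sends $(x,hc)$ to $(hx,c)=(x,c)$.

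I would then compute the $H$-equivariant virtual normal bundle at each fixed component. At $M_+$ the normal direction is the $\Po$-tangent at $0=[0,1]$; in the chart $a_1=1$ the coordinate $a_0$ is rescaled by $h^{-1}$ under $H$, so the equivariant Euler class of the normal line is $-t$ and the localization contribution is $[M_+]\virt/(-t)$. Symmetrically, at $M_-$ the tangent direction at $\infty$ has weight $+1$ and contributes $[M_-]\virt/t$. At a component $X_i\subset X^T\subset Z$, the slice $c=1$ on the $\csta$-factor (on which $T$ acts freely) identifies a neighbourhood of $X_i$ in $Z$ with a neighbourhood of $X_i$ in $X$, and under this identification the induced $H$-action coincides with the original $T$-action via $\sigma=h$. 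Consequently, the $H$-equivariant virtual normal bundle of $X_i\subset Z$ is precisely the $T$-equivariant virtual normal bundle $N_i\virt$ of $X_i\subset X$ with $T$-weights reinterpreted as $H$-weights, contributing $[X_i]\virt/e(N_i\virt)$.

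The main technical point is this weight bookkeeping at $X_i$: one must check that the slice identification is genuinely $H$-equivariant for the matched actions, so that the $H$-equivariant Euler class computed on $Z$ equals the $T$-equivariant $e(N_i\virt)$ computed on $X$. Once this is in place, summing the three residue contributions given by virtual localization produces exactly
\[
[Z]\virt=\imath_*\sum \frac{[X_i]\virt}{e(N_i\virt)} + \frac{[M_+]\virt}{-t}+\frac{[M_-]\virt}{t},
\]
as asserted.
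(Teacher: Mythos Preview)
Your proposal is correct and follows exactly the paper's approach: the paper simply states that this lemma follows by applying the virtual localization theorem of \cite{GrPand} to $Z$ with its $H$-action, without spelling out the fixed-locus and normal-bundle computations. You have filled in precisely those details---the identification $Z^H=M_+\sqcup M_-\sqcup X^T$, the weight $\mp 1$ normal lines at $M_\pm$, and the slice argument matching the $H$-equivariant virtual normal bundle of $X_i\subset Z$ with the $T$-equivariant $N_i\virt$ of $X_i\subset X$---all of which are correct.
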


Here $X_i$ are the connected components of the $T$-fixed locus $X^T$
of $X$, and $N_i\virt$ denotes the virtual normal bundle of $X_i$ in
$X$. Note that if $X$ satisfies the virtual localization
requirement, so does $Z$ by construction.

We pair the homology class $[Z]\virt$ with $t\tilde{\alpha}\in
A^{d+1}_H(Z)$ and then take the degree zero part in $t$. Since
$[Z]\virt\in A^H_{d+1}(Z)$ has only terms of nonnegative degrees
in $t$,
$$\deg \bl t\tilde\alpha\cdot[Z]\virt\br =0.
$$
Therefore, upon moving the terms for $M_\pm$ to the left hand side,
we obtain the desired wall crossing formula:
$$\delta[\alpha]=\sum_i \mathrm{res}\Bigl(
\alpha\cdot\frac{[X_i]\virt}{e(N_i\virt)}\Bigr)\in \QQ.
$$
This completes the proof of Theorem \ref{thm-1}.
\end{appendix}

%%%%%%%%%%%%%%%%%%%%%%%%%%%%%%%%%%%%%%%%%%%%%%%%%%%%
%%%%%%%%%%%%%%%%%%%%%%%%%%%%%%%%%%%%%%%%%%%%%%%%%%%%
%%%%%%%%%%%%%%%%%%%%%%%%%%%%%%%%%%%%%%%%%%%%%%%%%%%%
%%%%%%%%%%%%%%%%%%%%%%%%%%%%%%%%%%%%%%%%%%%%%%%%%%%%
%%%%%%%%%%%%%%%%%%%%%%%%%%%%%%%%%%%%%%%%%%%%%%%%%%%%
%%%%%%%%%%%%%%%%%%%%%%%%%%%%%%%%%%%%%%%%%%%%%%%%%%%%

\bibliographystyle{amsplain}

\end{document}